\theoremstyle{plain}
\newtheorem{thm}{Theorem}[section]
\newtheorem{prop}[thm]{Proposition}
\newtheorem{cor}[thm]{Corollary}
\theoremstyle{definition}
\newtheorem{remark}[thm]{Remark}
\theoremstyle{example}
\newtheorem*{example}{Example}
\theoremstyle{remark}
\numberwithin{equation}{section}
\def\cB{\mathcal{B}}
\def\cH{\mathcal{H}}
\def\cO{\mathcal{O}}
\def\cR{\mathcal{R}}
\def\cW{\mathcal{W}}
\def\CC{\mathbb{C}}
\def\ZZ{\mathbb{Z}}
\def\fg{\mathfrak{g}} 
\def\fh{\mathfrak{h}}
\def\fn{\mathfrak{n}}
\def\fgl{\mathfrak{gl}}  
\def\fsl{\mathfrak{sl}}  
\def\fso{\mathfrak{so}}  
\def\fsp{\mathfrak{sp}}  
\def\ad{\mathrm{ad}}
\def\dim{\mathrm{dim}} 
\def\End{\mathrm{End}} 
\def\ev{\mathrm{ev}} 
\def\Hom{\mathrm{Hom}} 
\def\id{\mathrm{id}}
\def\qtr{\mathrm{qtr}}
\def\Res{\mathrm{Res}}
\def\tr{\mathrm{tr}} 
\def\wt{\mathrm{wt}}
\def\half{\hbox{$\frac12$}}
\def\<{\langle}
\def\>{\rangle}
\definecolor{dred}{rgb}{.65, 0, 0.15}
\definecolor{zajj}{rgb}{0.4, 0, .6}
\definecolor{gray}{rgb}{0.6, .6, .6}
\renewcommand{\@makefnmark}{\mbox{\textsuperscript{}}}
\title{Affine and degenerate affine BMW algebras:\\ Actions on tensor space}
\author{
Zajj Daugherty\\
Department of Mathematics \\
Dartmouth College\\
Hanover, NH 03755 USA\\
zajj.b.daugherty@dartmouth.edu \\ 
\and
Arun Ram \\
Department of Mathematics and Statistics \\
University of Melbourne \\
Parkville VIC 3010 Australia \\
aram@unimelb.edu.au \\
\and
Rahbar Virk \\
Department of Mathematics, Campus Box 395 \\
Boulder, Colorado 80309 USA \\
rahbar.virk@colorado.edu\\
}
\date{}
\newcommand{\comment}[1]{}
\begin{document}

\maketitle

\setcounter{tocdepth}{3}
\tableofcontents

\begin{abstract}
The affine and degenerate affine  Birman-Murakami-Wenzl (BMW) algebras arise naturally in the 
context of Schur-Weyl duality for orthogonal and symplectic quantum 
groups and Lie algebras, respectively.  Cyclotomic BMW algebras, affine and  cyclotomic Hecke algebras, and their degenerate versions are quotients.  In this paper we explain how the affine and degenerate affine BMW algebras are tantalizers (tensor power centralizer algebras) by defining actions of the affine braid group and the degenerate affine braid algebra on tensor space and showing that, in important cases, these actions induce actions of the affine and degenerate affine BMW algebras.  We then exploit the connection to quantum groups and Lie algebras to determine universal parameters for the affine and degenerate affine BMW algebras.  Finally, we show that  the universal
parameters are central elements---the higher Casimir elements for orthogonal and symplectic 
enveloping algebras and quantum groups. 
\end{abstract}

\footnote{ \emph{AMS 2010 subject classifications:} 17B37 (17B10 20C08)}

\section*{Introduction}
This paper is a continuation of our study of the affine Birman-Murakami-Wenzl (BMW) algebras
$W_k$ and their degenerate versions $\cW_k$.
In \cite{DRV} we defined the algebras $W_k$ and $\cW_k$ and determined their centers.  Each of these algebras  contains a commuting family of ``Jucys-Murphy elements''; following
Nazarov and Beliakova-Blanchet \cite{Naz,BB} we derived generating function formulas for specific
central elements $z_k^{(\ell)}\in \cW_k$ and $Z_k^{(\ell)}\in W_k$ in terms of the
Jucys-Murphy elements.

In this paper we show that the algebras $W_k$ and $\cW_k$ have a natural action on tensor space which provides a Schur-Weyl duality with the quantum group and enveloping algebras of classical type Lie algebras.  In particular, the algebras $W_k$ and $\cW_k$  arise in orthogonal and symplectic type, though we treat all the classical type cases uniformly. In complete analogy with the fact that affine BMW algebra is a quotient of the group algebra $CB_k$ of the affine braid group of type A, the degenerate affine BMW algebra is a quotient of the {\sl degenerate affine braid algebra} $\cB_k$ which we first defined in \cite{DRV}.  This analogy extends to the Schur-Weyl duality.  In Theorem \ref{degaction}, we show that there is a natural action of $\cB_k$ on tensor space which commutes with an \emph{arbitrary} finite-dimensional complex reductive Lie algebra $\fg$.  In Theorem \ref{z1action} we show that,
when $\fg$ is of classical type and the tensor space is constructed from the $n$-dimensional defining representation, the action of $\cB_k$ becomes an action of familiar algebras : the degenerate affine BMW algebra arises when $\fg=\fso_n$ or $\fsp_n$, and the degenerate affine Hecke algebra arises when $\fg=\fgl_n$ or $\fsl_n$.

The affine and degenerate affine  BMW algebras depend on the choice of an infinite number of parameters.
This is analogous to the way that the Iwahori-Hecke algebra depends on one parameter, often 
called $q$.  Unfortunately, the infinite collection of parameters for the BMW algebras is not free; significant work has been done on when a collection is admissible \cite{AMR,WY1, WY2,Go2, Go3, GH1, GH2, GH3, Yu}. In this work, we take a different point of view and produce \emph{universal} parameters for the affine and degenerate affine BMW algebras.  These universal parameters are symmetric functions which satisfy the admissibility conditions. In future work, we hope to show via representation theory that every choice of admissible complex parameters is a specialization of our universal parameters.  

To compute the symmetric functions which arise as universal parameters, we use the Schur-Weyl duality to naturally identify them as elements
of the center of the corresponding symplectic or orthogonal enveloping algebra or
quantum group (which, by the Harish-Chandra isomorphism, is isomorphic to a ring of symmetric
functions).  Specifically, in Theorem \ref{z1action} and Theorem \ref{Z1action}
we execute computations which push the recursive formulas of Nazarov \cite{Naz} and Beliakova-Blanchet \cite{BB} to the other side of the Schur-Weyl duality.  This produces explicit formulas for the Harish-Chandra images of the corresponding central elements $z_V^{(\ell)}$ and $Z_V^{(\ell)}$ of the 
orthogonal and symplectic enveloping algebras and quantum groups. 
These computations are related to the calculations in, for example, \cite{Naz}, \cite[Ch.\ 7]{Mo} and \cite{MR}.

In Section \ref{sec:connections} we show that the central elements $z_V^{(\ell)}$ and 
$Z_V^{(\ell)}$are the
natural higher Casimir elements for orthogonal and symplectic enveloping algebras
and quantum groups. In fact, 
we are able to show that the universal parameters $z_V^{(\ell)}$
of the degenerate affine BMW algebras coincide exactly with the
higher Casimirs for orthogonal and symplectic Lie algebras given by Perelomov-Popov
\cite{PP1, PP2}.  Expositions of the Perelomov-Popov results are also found in \cite[\S127]{Zh} and
\cite[\S7.1]{Mo}.  Another 
computation shows that the universal parameters $Z_V^{(\ell)}$ of
the affine BMW algebras coincide with the central elements in quantum groups defined
by Reshetikhin-Takhtajan-Faddeev central elements defined in \cite[Theorem 14]{RTF}.
To execute our computation we have relied on a remarkable identity of 
Baumann \cite[Theorem 1]{Bau}.

\smallskip\noindent
\textbf{Acknowledgements:}  Significant work on this paper was done while the authors 
were in residence at the Mathematical Sciences Research Institute (MSRI) in 2008. 
We thank MSRI for hospitality, support, and a wonderful working environment.  We thank F.\ Goodman and A.\ Molev for their helpful comments and references.  This research has been partially supported by the National Science Foundation DMS-0353038 and the Australian Research Council DP0986774 and DP120101942.

\section{Actions of general type tantalizers}
\label{sec:general-actions}

Our goal in Section \ref{sec:classical-actions}
 is to provide a tensor space action of the affine Birman-Murakami-Wenzl (BMW) algebra $W_k$ and its degenerate version $\cW_k$ by way of the group algebra of the affine braid group $CB_k$ and the degenerate affine braid algebra $\cB_k$, respectively. The definition of the degenerate affine braid algebra $\cB_k$ makes the Schur-Weyl duality framework completely analogous in both the affine and degenerate affine cases.  

In this section, we define $CB_k$ and $\cB_k$ and show that both act on tensor space of the form $M \otimes V^{\otimes k}$.  In the degenerate affine case this action commutes with complex reductive Lie algebras $\fg$; in the affine case this action commutes with the Drinfeld-Jimbo quantum group $U_h\fg$.  As we will see in Section \ref{sec:classical-actions}, the affine and degenerate affine BMW algebras arise when $\fg$ is orthogonal or symplectic and $V$ is the defining representation; similarly, the degenerate affine Hecke algebras arise when $\fg$ is of type $\fgl_n$ or $\fsl_n$ and  $V$ is the defining representation. In the case when $M$ is the trivial representation and $\fg$ is $\fso_n$, the elements $y_1, \dots, y_k$ in $\cB_k$ become the Jucys-Murphy elements for the Brauer algebras used in \cite{Naz}; in the case that  $\fg = \fgl_n$, these become the classical Jucys-Murphy elements in the group algebra of the symmetric group.

The action of the degenerate affine braid algebra $\cB_k$ and the action of 
the affine braid group $B_k$ on $M\otimes V^{\otimes k}$ each provide \emph{Schur functors}
\begin{equation}\label{Schurfunctor}
\begin{matrix}
F^\lambda_{V} \colon &\{ \hbox{$U$-modules}\} &\longrightarrow &\begin{cases}\{\hbox{$\cB_k$-modules}\}, & \text{if $U = U\fg$,}\\ \{\hbox{$B_k$-modules}\}, & \text{if $U = U_h\fg$.}\end{cases} \\~\\
&M &\longmapsto &\Hom_{U}(M(\lambda), M\otimes V^{\otimes k}) \\
\end{matrix}
\end{equation}
where in each case $\Hom_{U}(M(\lambda), M\otimes V^{\otimes k})$  is the vector space
of highest weight vectors of weight $\lambda$ in $M\otimes V^{\otimes k}$.
These ubiquitous functors transfer representation theoretic information back and forth
either between $U\fg$ and  $\cB_k$ 
or  between  $U_h\fg$ and $CB_k$.

%

\subsection{The degenerate affine braid algebra action}
\label{sec:affine-braid-algebra}


Let $C$ be a commutative ring and let $S_k$ denote the symmetric group on $\{1,\ldots, k\}$.
For $i\in\{1,\ldots, k\}$, 
write $s_i$ for the transposition in $S_k$ that switches $i$ and $i+1$.
The \emph{degenerate affine braid algebra} is the algebra $\cB_k$ over $C$ generated by
\begin{equation}\label{gensA}
t_u\quad (u\in S_k),
\qquad
\kappa_0, \kappa_1,
\qquad\hbox{and}\qquad
y_1,\ldots, y_k,
\end{equation}
with relations
\begin{equation}\label{easyrels}
t_ut_v = t_{uv}, \qquad y_iy_j = y_jy_i,
\quad
\kappa_0\kappa_1=\kappa_1\kappa_0,
\quad
\kappa_0 y_i = y_i \kappa_0,
\quad
\kappa_1 y_i = y_i\kappa_1,
\end{equation}
\begin{equation}\label{kyrels}
\quad
\kappa_0 t_{s_i} = t_{s_i}\kappa_0,
\qquad 
\kappa_1t_{s_1}\kappa_1t_{s_1}
=t_{s_1}\kappa_1t_{s_1}\kappa_1,
\qquad\hbox{and}\qquad
\kappa_1t_{s_j} = t_{s_j}\kappa_1,\ \hbox{for $j\ne 1$,}
\end{equation}
\begin{equation}\label{rel:graded_braid3}
t_{s_i}(y_i + y_{i+1}) = (y_i + y_{i+1})t_{s_i},
\quad\hbox{and}\qquad
y_jt_{s_i} = t_{s_i} y_j \quad \hbox{for  $j \neq i, i+1$,}
\end{equation}
and, for $i=1,\ldots, k-2$,
\begin{equation}
\label{rel:graded_braid5}
t_{s_i} t_{s_{i+1}}\gamma_{i,i+1}t_{s_{i+1}} t_{s_i} 
= \gamma_{i+1,i+2},
\qquad\hbox{where}\quad
\gamma_{i,i+1} =  y_{i+1}-t_{s_i}y_it_{s_i}.
\end{equation}
This presentation highlights the ``Jucys-Murphy'' elements $y_1,\ldots, y_k$ for the degenerate 
affine BMW algebra $\cW_k$ as in \cite{Naz}. However, the algebra $\cB_k$ also admits the following presentation, which highlights its natural action on tensor space (as we will see in Theorem \ref{degaction}).

\begin{thm} \label{presthm} \cite[Theorem 2.1]{DRV}
The degenerate affine braid algebra $\cB_k$ has another presentation by generators
\begin{equation}\label{gensB}
t_u\quad (u\in S_k), \qquad
\kappa_0,\ldots, \kappa_k
\qquad\hbox{and}\qquad
\gamma_{i,j},\ \ \hbox{for $0\le i,j\le k$ with $i\ne j$,}
\end{equation}
and relations
\begin{equation}\label{tildeBrels1}
t_ut_v = t_{uv}, \qquad 
t_w \kappa_i t_{w^{-1}} = \kappa_{w(i)},
\qquad
t_w \gamma_{i,j} t_{w^{-1}} = \gamma_{w(i), w(j)},
\end{equation}
\begin{equation}\label{rel:graded_braid4}
\kappa_i\kappa_j = \kappa_j\kappa_i,
\qquad \kappa_i \gamma_{\ell,m} = \gamma_{\ell,m}\kappa_i,
\end{equation}
\begin{equation}\label{rel:graded_braid6}
\gamma_{i,j}=\gamma_{j,i},\qquad
\gamma_{p,r}\gamma_{\ell,m} = \gamma_{\ell,m}\gamma_{p,r},
\qquad\hbox{and}\qquad
\gamma_{i,j}(\gamma_{i,r}+\gamma_{j,r}) = (\gamma_{i,r}+\gamma_{j,r})\gamma_{i,j},
\end{equation}
for $p\ne \ell$ and $p\ne m$ and $r\ne \ell$ and $r\ne m$
and $i\ne j$, $i\ne r$ and $j\ne r$.
\end{thm}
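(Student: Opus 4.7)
The plan is to produce mutually inverse surjective algebra homomorphisms between the algebra defined by generators and relations \eqref{gensA}--\eqref{rel:graded_braid5} (call it $\cB_k^{\mathrm{old}}$) and the algebra defined by \eqref{gensB}--\eqref{rel:graded_braid6} (call it $\cB_k^{\mathrm{new}}$). The guiding intuition comes from the anticipated tensor space action: on $M \otimes V^{\otimes k}$ one expects $\kappa_i$ to act in the $i$\textsuperscript{th} factor and $\gamma_{i,j}$ to act as a classical $r$-matrix between the $i$\textsuperscript{th} and $j$\textsuperscript{th} factors, so the Jucys--Murphy element $y_i$ should correspond to $\sum_{0 \le j < i} \gamma_{j,i}$.

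First I define $\psi \colon \cB_k^{\mathrm{old}} \to \cB_k^{\mathrm{new}}$ on generators by $\psi(t_u) = t_u$, $\psi(\kappa_0) = \kappa_0$, $\psi(\kappa_1) = \kappa_1$, and
\[
\psi(y_i) = \sum_{j=0}^{i-1}\gamma_{j,i}.
\]
Going the other way, I define $\phi \colon \cB_k^{\mathrm{new}} \to \cB_k^{\mathrm{old}}$ by $\phi(t_u) = t_u$, $\phi(\kappa_0) = \kappa_0$, $\phi(\kappa_i) = t_{(1\,i)} \kappa_1 t_{(1\,i)}$ for $i\ge 1$, $\phi(\gamma_{0,1}) = y_1$, $\phi(\gamma_{i,i+1}) = y_{i+1} - t_{s_i} y_i t_{s_i}$ for $i\ge 1$, and for all other $\gamma_{i,j}$ I extend by the conjugation rule: choose any $w \in S_k$ taking $\{p,q\}$ (with $(p,q)$ one of the generating pairs) to $\{i,j\}$ and set $\phi(\gamma_{i,j}) = t_w \phi(\gamma_{p,q}) t_{w^{-1}}$. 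Showing this last definition is independent of the choice of $w$ inside $\cB_k^{\mathrm{old}}$ is already a non-trivial substep, which I would handle by verifying, from the relations of $\cB_k^{\mathrm{old}}$, that the conjugation relation $t_u \phi(\gamma_{i,j}) t_{u^{-1}} = \phi(\gamma_{u(i), u(j)})$ holds for the base generators $u = s_\ell$ on each pair $(i,j)$ via case analysis.

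Next I would verify that $\psi$ respects the defining relations of $\cB_k^{\mathrm{old}}$. Most relations fall out directly by expanding $\psi(y_i) = \sum_{a<i}\gamma_{a,i}$ and using \eqref{rel:graded_braid4}--\eqref{rel:graded_braid6}: commutativity $\psi(y_i)\psi(y_j) = \psi(y_j)\psi(y_i)$ follows from the disjoint-index commutations together with the three-index commutator identity $\gamma_{i,j}(\gamma_{i,r}+\gamma_{j,r}) = (\gamma_{i,r}+\gamma_{j,r})\gamma_{i,j}$; the braid relation $\kappa_1 t_{s_1}\kappa_1 t_{s_1} = t_{s_1}\kappa_1 t_{s_1}\kappa_1$ becomes $\kappa_1\kappa_2 = \kappa_2\kappa_1$ via $t_{s_1}\kappa_1 t_{s_1} = \kappa_{s_1(1)} = \kappa_2$; the centralizer relations $t_{s_i}(y_i+y_{i+1}) = (y_i+y_{i+1})t_{s_i}$ and $y_j t_{s_i} = t_{s_i} y_j$ follow because $s_i$ either fixes the set $\{i,i+1\}$ or each index appearing in $\psi(y_j)$; and \eqref{rel:graded_braid5} reduces to $t_w \gamma_{i,i+1} t_{w^{-1}} = \gamma_{i+1,i+2}$ for $w = s_i s_{i+1}$, which is immediate from \eqref{tildeBrels1} once one checks $\psi(\gamma_{i,i+1}) = \gamma_{i,i+1}$ via the telescoping $\psi(y_{i+1}) - t_{s_i} \psi(y_i) t_{s_i} = \gamma_{i,i+1}$.

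The main obstacle is verifying that $\phi$ respects the much larger set of relations of $\cB_k^{\mathrm{new}}$, since every $\kappa_i$ and $\gamma_{i,j}$ must be pushed back through a web of conjugations by $t_u$ and expressed using only $\kappa_0,\kappa_1,y_1,\ldots,y_k$. Concretely, the commutations $\kappa_i\gamma_{\ell,m} = \gamma_{\ell,m}\kappa_i$ and the three-term identity $\gamma_{i,j}(\gamma_{i,r}+\gamma_{j,r}) = (\gamma_{i,r}+\gamma_{j,r})\gamma_{i,j}$ must be deduced from the rather spare relations \eqref{easyrels}--\eqref{rel:graded_braid5}; these will require a careful induction, using the recursion $\gamma_{i,i+1} = y_{i+1}-t_{s_i}y_i t_{s_i}$ together with $t_u$-conjugations to reduce every case to a small canonical list involving only adjacent indices and $\kappa_1$. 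Once $\phi$ is shown to be a well-defined homomorphism, $\phi\circ\psi = \mathrm{id}$ on generators is automatic from $\phi(\gamma_{0,1})+\sum_{j=1}^{i-1}\phi(\gamma_{j,i}) = y_i$ (a telescoping identity in $\cB_k^{\mathrm{old}}$ to be verified by induction), while $\psi\circ\phi = \mathrm{id}$ is verified on each generator by short computations of the type $\psi(\phi(\kappa_i)) = t_{(1\,i)}\kappa_1 t_{(1\,i)} = \kappa_{(1\,i)(1)} = \kappa_i$ and $\psi(\phi(\gamma_{i,i+1})) = \gamma_{i,i+1}$ shown above.
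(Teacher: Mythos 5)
This theorem is not proved in the present paper at all: it is quoted from \cite[Theorem 2.1]{DRV}, and the only proof-relevant data given here is the conversion formula \eqref{gensAtogensB}, $y_j=\frac12\kappa_j+\sum_{0\le\ell<j}\gamma_{\ell,j}$. Your overall strategy (two mutually inverse homomorphisms $\psi,\phi$ between the two presented algebras, with $\gamma_{i,i+1}$ recovered as $y_{i+1}-t_{s_i}y_it_{s_i}$ and the rest by $S_k$-conjugation) is the standard and essentially forced approach, so the architecture is fine. Two issues, however.

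First, your dictionary $\psi(y_i)=\sum_{j<i}\gamma_{j,i}$ omits the $\frac12\kappa_i$ term of \eqref{gensAtogensB}. Since $\kappa_i$ commutes with all $t_{s_j}$ fixing $i$ and with all $\gamma$'s, one can check that your $\psi$ also satisfies the old relations and would yield \emph{an} isomorphism; but it is not the isomorphism the paper uses, and the later results (e.g. $\Phi(y_1)=\frac12 y+\gamma$ in the proof of Theorem \ref{degBMWaction}, and the formula \eqref{cjconvert} for $c_j$) depend on the $\frac12\kappa_j$ term. If you intend to prove the theorem as it is used in this paper, you must carry that term; it changes nothing in your verifications (it telescopes out of $\psi(y_{i+1})-t_{s_i}\psi(y_i)t_{s_i}$ exactly as the $\gamma$-sums do), but $\phi(\gamma_{0,1})$ must then be $y_1-\frac12\kappa_1$ rather than $y_1$.

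Second, and more seriously, the entire mathematical content of the theorem sits in the two steps you explicitly defer: (i) well-definedness of $\phi(\gamma_{i,j})$ independent of the chosen conjugating $w$, and (ii) the verification that the images $\phi(\kappa_i)$, $\phi(\gamma_{i,j})$ satisfy \eqref{rel:graded_braid4} and \eqref{rel:graded_braid6} inside the old algebra. These are not routine: for example, already $\phi(\kappa_1)\phi(\gamma_{2,3})=\phi(\gamma_{2,3})\phi(\kappa_1)$ requires showing that $\kappa_1$ commutes with $t_{s_1}y_1t_{s_1}$, which does not follow from any single listed relation (one only has $\kappa_1y_1=y_1\kappa_1$, $\kappa_1t_{s_j}=t_{s_j}\kappa_1$ for $j\ne1$, and the braid-type relation $\kappa_1t_{s_1}\kappa_1t_{s_1}=t_{s_1}\kappa_1t_{s_1}\kappa_1$), and must be assembled from $t_{s_1}(y_1+y_2)=(y_1+y_2)t_{s_1}$ together with an inductive bookkeeping of which conjugates of $y$'s and $\kappa$'s commute. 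The three-term identity $\gamma_{i,j}(\gamma_{i,r}+\gamma_{j,r})=(\gamma_{i,r}+\gamma_{j,r})\gamma_{i,j}$ for $\phi$-images is harder still. As written, your proposal is a correct plan with the decisive verifications left as declarations of intent, so it cannot yet be accepted as a proof; you need to either execute that induction or reduce it to the argument in \cite{DRV}.
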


The conversion between the two presentations is given by the formulas
$\kappa_0=\kappa_0$,
$\kappa_1=\kappa_1$,
$t_w=t_w$,
\begin{equation}\label{gensAtogensB}
y_j=\hbox{$\frac12$}\kappa_j + \sum_{0\le \ell<j} \gamma_{\ell, j},
\end{equation}
and the formulas in \eqref{tildeBrels1}.
Set 
\begin{equation}\label{kappadefn}
c_0 = \kappa_0
\qquad\hbox{and}\qquad
c_j= \kappa_0+ 2(y_1+\ldots+y_j),
\end{equation}
for $j=1,2,\ldots, k$.
Then $c_0,\ldots, c_k$ pairwise commute,
\begin{equation}\label{cjconvert}
y_j = \hbox{$\frac{1}{2}$}(c_j - c_{j-1})
\qquad\hbox{and}\qquad
c_j = \sum_{i=0}^j \kappa_i + 2\sum_{0\le \ell<m\le j} \gamma_{\ell,m}.
\end{equation}

Let $\fg$ be a finite-dimensional complex Lie algebra with a symmetric nondegenerate
$\mathrm{ad}$-invariant bilinear form i.e.,
$$\langle,\rangle\colon \fg\otimes \fg \to \CC,
\qquad\hbox{with}\qquad
\langle [x_1,x_2], x_3\rangle + \langle x_2, [x_1,x_3]\rangle = 0
$$
and $\langle x_1,x_2\rangle = \langle x_2,x_1\rangle$, 
for $x_1,x_2,x_3\in \fg$.
Let $B=\{b_1,\ldots, b_n\}$ be a basis for $\fg$ and let $\{b_1^*,\ldots, b_n^*\}$ 
be the dual basis with respect to $\langle, \rangle$.  The \emph{Casimir} is 
\begin{equation}\label{casdefn}
\kappa 
= b_1b_1^*+\cdots+b_nb_n^* = \sum_{b\in B} bb^*
\qquad\hbox{and}\qquad
\kappa\in Z(U\fg),
\end{equation}
where $ Z(U\fg)$ is the center of  the enveloping algebra $U\fg$ (see \cite[I \S 3 Prop.\ 11]{Bou}). 
Since the coproduct on $\fg$ is defined by $\Delta(x) = x\otimes 1 + 1\otimes x$ for $x\in \fg$, 
\begin{equation}\label{gamma_defn}
\Delta(\kappa)=\kappa\otimes 1 + 1\otimes \kappa + 2\gamma,
\qquad\hbox{where}\quad \gamma = \sum_{b\in B} b\otimes b^*.
\end{equation}

\begin{thm}\label{degaction}
Let $\fg$ be a finite-dimensional complex Lie algebra with a symmetric nondegenerate
$\mathrm{ad}$-invariant bilinear form $\langle , \rangle$, and let $U\fg$ be the universal enveloping
algebra.  Let $C=Z(U\fg)$ be the center of $U\fg$, $\kappa$ be the Casimir in $C$, and
$\gamma = \sum_b b \otimes b^*$ as in \eqref{gamma_defn}.
Let $M$ and $V$ be $\fg$-modules and let $s_1 \cdot (u \otimes v) = v \otimes u$,
for $u,v \in V$.

~

\noindent The degenerate affine braid algebra $\cB_k$ acts on $M \otimes V^{\otimes k}$ via 
$\Phi:  \cB_k  \to \End(M \otimes V^{\otimes k})$
defined by
\begin{equation*}\label{oncomponents}
\begin{array}{l}
\hbox{$\Phi(t_{s_i})=\id_M \otimes \id_V^{\otimes i-1} \otimes s_1 \otimes \id_V^{\otimes k-i-1}$
so that $S_k$ acts by permuting tensor factors of $V$,}\\
\hbox{$\Phi(\kappa_i)$ is $\kappa$ acting in the $i$th factor 
of $V$ in $M\otimes V^{\otimes k}$ and $\Phi(\kappa_0)$ is $\kappa$ acting on $M$,} \\
\hbox{$\Phi(\gamma_{\ell,m})$ is $\gamma$ acting in the $\ell$th 
and $m$th factors of $V$ in $M\otimes V^{\otimes k}$, }\\
\hbox{$\Phi(\gamma_{0,\ell})$ is $\gamma$ acting on $M$ and the $\ell$th factor of $V$ in $M \otimes V^{\otimes k}$,} \\
\hbox{$\Phi(c_i)$ is $\kappa$ acting on $M$ and the first $i$ factors of $V$}, \\
\hbox{$\Phi(z)= z\otimes \id_V^{\otimes k}$ for $z\in C$.}
\end{array}
\end{equation*}
This action of $\cB_k$ commutes with the $U\fg$-action on $M\otimes V^{\otimes k}$.
\end{thm}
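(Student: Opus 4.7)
The plan is to verify the relations of the second presentation of $\cB_k$ (Theorem \ref{presthm}), since the proposed formulas for $\Phi$ naturally define the generators $t_u$, $\kappa_i$, and $\gamma_{i,j}$ directly, making this presentation far more convenient than the one in \eqref{gensA}. The single identity that drives almost every nontrivial step is that $\gamma\in \End(V\otimes V)$ commutes with $\Delta(\fg)$. This is immediate from \eqref{gamma_defn}: since $\kappa\in Z(U\fg)$, the coproduct $\Delta(\kappa)=\kappa\otimes 1+1\otimes \kappa+2\gamma$ is central in $\Delta(U\fg)$ inside $U\fg\otimes U\fg$, and the first two summands are manifestly central in $U\fg\otimes U\fg$, so $\gamma$ commutes with every $\Delta(x)=x\otimes 1+1\otimes x$ for $x\in\fg$. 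Restated on the tensor space $M\otimes V^{\otimes k}$, the operator $\Phi(\gamma_{\ell,m})$ commutes with the diagonal action of $\fg$ on the $\ell$th and $m$th factors. Combined with the fact that $\kappa\in Z(U\fg)$ (so each $\Phi(\kappa_i)$ is $\fg$-linear on the factor it touches), this already gives the second statement of the theorem: every generator of $\cB_k$ acts by a $U\fg$-linear operator on $M\otimes V^{\otimes k}$.

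For the verification that $\Phi$ respects the relations \eqref{tildeBrels1}--\eqref{rel:graded_braid6}, I would work through them in order of difficulty. The symmetric group relation $t_ut_v=t_{uv}$ is the standard action of $S_k$ permuting tensor factors. The equivariance relations $t_w\kappa_i t_{w^{-1}}=\kappa_{w(i)}$ and $t_w\gamma_{i,j}t_{w^{-1}}=\gamma_{w(i),w(j)}$ reduce to the fact that conjugating by the permutation of tensor factors relabels which factors the operators act on, and here the symmetry $\gamma_{i,j}=\gamma_{j,i}$ uses that the bilinear form is symmetric so that $\gamma=\sum_b b\otimes b^*=\sum_b b^*\otimes b$ is invariant under the swap of its two factors. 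The commutation relations \eqref{rel:graded_braid4} and the disjoint-index case of \eqref{rel:graded_braid6} are obvious when the indices involved are all distinct, since the operators act on disjoint tensor slots. The one remaining case of \eqref{rel:graded_braid4}, $\kappa_i\gamma_{\ell,m}=\gamma_{\ell,m}\kappa_i$ when $i\in\{\ell,m\}$, follows because the $\fg$-linearity of $\gamma$ (applied to a single $x=\kappa$) implies $\gamma$ commutes with $\kappa\otimes 1+1\otimes\kappa$, and $\kappa$ is already central so $\gamma$ commutes with $\kappa\otimes 1$ and $1\otimes\kappa$ individually.

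The main obstacle, and the only relation requiring a small computation, is the mixed relation $\gamma_{i,j}(\gamma_{i,r}+\gamma_{j,r})=(\gamma_{i,r}+\gamma_{j,r})\gamma_{i,j}$ in \eqref{rel:graded_braid6}. The trick is to rewrite
\[
\Phi(\gamma_{i,r}+\gamma_{j,r})=\sum_{b\in B}\bigl(b^{(i)}+b^{(j)}\bigr)\,b^{*(r)}=\sum_{b\in B}\Delta(b)_{(i,j)}\cdot b^{*(r)},
\]
where $\Delta(b)_{(i,j)}$ denotes the diagonal action of $b\in\fg$ on the $i$th and $j$th tensor factors. By the boxed identity above, $\Phi(\gamma_{i,j})$ commutes with each $\Delta(b)_{(i,j)}$; it also acts on different tensor slots than $b^{*(r)}$ so it commutes with that factor term-by-term. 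Hence $\Phi(\gamma_{i,j})$ commutes with the whole sum. The case where one of the indices is $0$ is handled identically, since $M$ plays the role of a zeroth tensor factor. Finally, to obtain the presentation \eqref{gensA}, one defines $y_j$ by \eqref{gensAtogensB} and checks that the relations \eqref{easyrels}--\eqref{rel:graded_braid5} hold automatically; this is purely algebraic in $\cB_k$ (not requiring any new check on tensor space) once the relations of Theorem \ref{presthm} have been verified.
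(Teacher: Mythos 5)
Your proposal is correct and follows essentially the same route as the paper: verify the presentation of Theorem \ref{presthm}, note that all relations except the last one in \eqref{rel:graded_braid6} follow from disjointness of tensor slots and $\fg$-linearity of $\gamma$ (which comes from $2\gamma=\Delta(\kappa)-\kappa\otimes 1-1\otimes\kappa$), and handle the mixed relation by rewriting $\gamma_{i,r}+\gamma_{j,r}=\sum_b \Delta(b)_{(i,j)}\,b^{*(r)}$ exactly as in the paper's displayed computation.
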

\begin{proof}

Since $\kappa\in Z(U\fg)$ the operators $\Phi(\kappa_i)$ 
are in $\End_\fg(M\otimes V^{\otimes k})$.
From the relation $2\gamma = \Delta(\kappa) - \kappa\otimes 1 - 1\otimes \kappa$
it also follows that
$\Phi(\gamma_{\ell,m})\in \End_{\fg}(M\otimes V^{\otimes k})$.

All of the relations in Theorem \ref{presthm} except the last
relations in \eqref{rel:graded_braid6} follow from
consideration of the tensor factors being acted upon.
The last relations in \eqref{rel:graded_braid6} are established by the computation
\begin{align*}
\gamma_{1,2}(\gamma_{1,3}&+\gamma_{2,3}) (v\otimes w\otimes z)
= \gamma_{1,2}\left(\sum_{i} b_iv\otimes w\otimes b_i^*z
+ v\otimes b_iw\otimes b_i^*z\right) \\
&=\gamma_{1,2}\left(\sum_{i} \Delta(b_i)\otimes b_i^*\right)(v\otimes w\otimes z)
=\left(\sum_{i} \Delta(b_i)\otimes b_i^*\right)\gamma_{1,2}(v\otimes w\otimes z) \\
&= (\gamma_{1,3}+\gamma_{2,3})\gamma_{1,2}(v\otimes w\otimes z),
\end{align*}
for $v,w,z\in V$. 
Recursively applying the coproduct formula from \eqref{gamma_defn} connects the
action of $c_i$  with the action of $\kappa_i$ and $\gamma_{\ell,m}$
as in the second formula in \eqref{cjconvert}.
\end{proof}

For a $U\fg$-module $M$ let
$$\begin{matrix} \kappa_M\colon &M &\longrightarrow &M \\
&m &\longmapsto &\kappa m
\end{matrix}
\qquad\qquad\hbox{where $\kappa$ is the Casimir}
$$  
as in \eqref{casdefn}.
If $M$ is a $U\fg$-module generated by a highest weight vector $v_\lambda^+$ of weight $\lambda$ 
then 
\begin{equation}\label{Casvalue}
\kappa_M = \langle \lambda, \lambda+2\rho\rangle \id_M,
\qquad\hbox{where}\quad \rho = \hbox{$\frac12$}\sum_{\alpha\in R^+} \alpha
\end{equation}
is the half-sum of the positive roots (see \cite[VIII \S 2 no.\ 3 Prop.\ 6 and VIII \S 6 no.\ 4 Cor.\ to Prop.\ 7]{Bou}).
By equation \eqref{gamma_defn}, if $M=L(\mu)$ and
$N=L(\nu)$ are finite-dimensional irreducible $U\fg$-modules of highest weights
$\mu$ and $\nu$ respectively, 
then $\gamma$ acts on the
$L(\lambda)$-isotypic component of the decomposition
$L(\mu)\otimes L(\nu) \cong \bigoplus_\lambda L(\lambda)^{\oplus c_{\mu\nu}^\lambda}$
by the constant 
\begin{equation}\label{gamma_value} 
\hbox{$\frac12$}\big(\langle \lambda,\lambda+2\rho\rangle
- \langle \mu,\mu+2\rho\rangle - \langle \nu,\nu+2\rho\rangle\big).
\end{equation}
Pictorially,
$$\Phi(c_j) =\!\!%
\beginpicture
\setcoordinatesystem units <0.75cm,0.25cm> 
\setplotarea x from 0 to 5.5, y from -1 to 4   
\plot 0.2 2 2.8 2 /
\plot 0.2 -1 2.8 -1 /
\linethickness=0.5pt                        
\put{$M$} at 0 -3
\put{$M$} at 0 4
\put{$V$} at 1 -3
\put{$V$} at 1 4
\put{$V$} at 3 -3
\put{$V$} at 3 4
\put{$V$} at 4 -3
\put{$V$} at 4 4
\put{$V$} at 5 -3
\put{$V$} at 5 4
\put{$V$} at 6.75 -3
\put{$V$} at 6.75 4
\put{$\otimes$} at 0.5 -3
\put{$\otimes$} at 0.5 3.9
\put{$\otimes$} at 3.5 -3
\put{$\otimes$} at 3.5 3.9
\put{$\otimes$} at 4.5 -3
\put{$\otimes$} at 4.5 3.9
\put{$\otimes\cdots\otimes$} at 2 -3
\put{$\otimes\cdots\otimes$} at 2 3.9
\put{$\otimes\! \cdot\!\cdot\!\cdot\!\otimes$} at 5.85 -3
\put{$\otimes\! \cdot\!\cdot\!\cdot\!\otimes$} at 5.85 3.9
\put{$\bullet$} at 0 -1 
\put{$\bullet$} at 0 2 
\plot 0 -1 0 2 /
\put{$\leftrightsquigarrow\  \kappa\ \leftrightsquigarrow$} at 1.5 .5
\put{$\bullet$} at 3 -1  
\put{$\bullet$} at 3 2 
\plot 3 -1 3 2 /
\put{$\bullet$} at 4 -1 \put{$\bullet$} at 4 2
\put{$\scriptstyle{j}$}[b] at 3 5
\put{$\scriptstyle{j+1}$}[b] at 4 5
\put{$\bullet$} at 5 -1 \put{$\bullet$} at 5 2  \plot 5 -1 5 2 /
\put{$\cdots$} at 5.95 .5
\put{$\bullet$} at 6.75 -1 \put{$\bullet$} at 6.75 2  \plot 6.75 -1 6.75 2 /
\plot 4 2 4 -1 /
\endpicture 
\quad \text{ and} \quad 
\Phi(t_{s_i})  =\!
		\beginpicture
\setcoordinatesystem units <0.75cm,0.25cm> 
			\setplotarea x from 1 to 7.75, y from -1 to 4   
			\linethickness=0.5pt                        
\put{$M$} at 0 -3
\put{$M$} at 0 4
\put{$V$} at 1 -3
\put{$V$} at 1 4
\put{$V$} at 3 -3
\put{$V$} at 3 4
\put{$V$} at 4 -3
\put{$V$} at 4 4
\put{$V$} at 5 -3
\put{$V$} at 5 4
\put{$V$} at 6 -3
\put{$V$} at 6 4
\put{$V$} at 7.75 -3
\put{$V$} at 7.75 4
\put{$\otimes$} at 0.5 -3
\put{$\otimes$} at 0.5 4
\put{$\otimes$} at 3.5 -3
\put{$\otimes$} at 3.5 4
\put{$\otimes$} at 4.5 -3
\put{$\otimes$} at 4.5 4
\put{$\otimes$} at 5.5 -3
\put{$\otimes$} at 5.5 4
\put{$\otimes\cdots\otimes$} at 2 -3
\put{$\otimes\cdots\otimes$} at 2 4
\put{$\otimes\! \cdot\!\cdot\!\cdot\!\otimes$} at 6.85 -3
\put{$\otimes\! \cdot\!\cdot\!\cdot\!\otimes$} at 6.85 3.9
				\put{$\bullet$} at 0 -1 \put{$\bullet$} at 0 2 \plot 0 -1 0 2 /
				\put{$\bullet$} at 1 -1 \put{$\bullet$} at 1 2 \plot 1 -1 1 2 /
				\put{$\cdots$} at 2 .5
				\put{$\bullet$} at 3 -1  \put{$\bullet$} at 3 2 \plot 3 -1 3 2 /
				\put{$\bullet$} at 4 -1 \put{$\bullet$} at 4 2
				\put{$\bullet$} at 5 -1 \put{$\bullet$} at 5 2
				\put{$\bullet$} at 6 -1 \put{$\bullet$} at 6 2  \plot 6 -1 6 2 /
				\put{$\cdots$} at 6.95 .5
				\put{$\bullet$} at 7.75 -1 \put{$\bullet$} at 7.75 2  \plot 7.75 -1 7.75 2 /
				\put{$\scriptstyle{i}$}[b] at 4 5
				\put{$\scriptstyle{i+1}$}[b] at 5 5
				\plot 4 2 5 -1 /
				\plot 5 2 4 -1 /
		\endpicture.
$$
By \eqref{gamma_value} and \eqref{kappadefn}, 
the eigenvalues of $y_j$ are functions of the eigenvalues of the Casimir.


\subsection{The affine braid group action}
\label{s:affbdgp}

The \emph{affine braid group} $B_k$ is the group given by
generators $T_1, T_2, \ldots, T_{k-1}$ and $X^{\varepsilon_1}$, 
with relations
\begin{align}
T_i T_j &= T_j T_i, &\mbox{if } j \neq i \pm 1 \label{affrel1},\\
T_i T_{i+1} T_i &= T_{i+1} T_i T_{i+1}, &\mbox{for } i=1, 2,\ldots, k-2,\label{affrel2}\\
X^{\varepsilon_1}T_1 X^{\varepsilon_1} T_1 
&= T_1 X^{\varepsilon_1} T_1 X^{\varepsilon_1}, \label{affrel3}\\
X^{\varepsilon_1}T_i &= T_i X^{\varepsilon_1},
&\mbox{for } i=2,3,\ldots, k-1. \label{affrel4}
\end{align}
The generators $T_i$ and $X^{\varepsilon_1}$ can be identified with the diagrams
\begin{equation}
T_i = 
\beginpicture
\setcoordinatesystem units <.5cm,.5cm>         
\setplotarea x from -5 to 3.5, y from -2 to 2    
\put{${}^i$} at 0 1.2      %
\put{${}^{i+1}$} at 1 1.2      %
\put{$\bullet$} at -3 .75      %
\put{$\bullet$} at -2 .75      %
\put{$\bullet$} at -1 .75      %
\put{$\bullet$} at  0 .75      
\put{$\bullet$} at  1 .75      %
\put{$\bullet$} at  2 .75      %
\put{$\bullet$} at  3 .75      %
\put{$\bullet$} at -3 -.75          %
\put{$\bullet$} at -2 -.75          %
\put{$\bullet$} at -1 -.75          %
\put{$\bullet$} at  0 -.75          
\put{$\bullet$} at  1 -.75          %
\put{$\bullet$} at  2 -.75          %
\put{$\bullet$} at  3 -.75          %
\plot -4.5 1.25 -4.5 -1.25 /
\plot -4.25 1.25 -4.25 -1.25 /
\ellipticalarc axes ratio 1:1 360 degrees from -4.5 1.25 center 
at -4.375 1.25
\put{$*$} at -4.375 1.25  
\ellipticalarc axes ratio 1:1 180 degrees from -4.5 -1.25 center 
at -4.375 -1.25 
\plot -3 .75  -3 -.75 /
\plot -2 .75  -2 -.75 /
\plot -1 .75  -1 -.75 /
\plot  2 .75   2 -.75 /
\plot  3 .75   3 -.75 /
\setquadratic
\plot  0 -.75  .05 -.45  .4 -0.1 /
\plot  .6 0.1  .95 0.45  1 .75 /
\plot 0 .75  .05 .45  .5 0  .95 -0.45  1 -.75 /
\endpicture
\qquad\hbox{and}\qquad
X^{\varepsilon_1} = 
~~\beginpicture
\setcoordinatesystem units <.5cm,.5cm>         
\setplotarea x from -5 to 3.5, y from -2 to 2    
\put{$\bullet$} at -3 0.75      %
\put{$\bullet$} at -2 0.75      %
\put{$\bullet$} at -1 0.75      %
\put{$\bullet$} at  0 0.75      
\put{$\bullet$} at  1 0.75      %
\put{$\bullet$} at  2 0.75      %
\put{$\bullet$} at  3 0.75      %
\put{$\bullet$} at -3 -0.75          %
\put{$\bullet$} at -2 -0.75          %
\put{$\bullet$} at -1 -0.75          %
\put{$\bullet$} at  0 -0.75          
\put{$\bullet$} at  1 -0.75          %
\put{$\bullet$} at  2 -0.75          %
\put{$\bullet$} at  3 -0.75          %
\plot -4.5 1.25 -4.5 -0.13 /
\plot -4.5 -0.37   -4.5 -1.25 /
\plot -4.25 1.25 -4.25  -0.13 /
\plot -4.25 -0.37 -4.25 -1.25 /
\ellipticalarc axes ratio 1:1 360 degrees from -4.5 1.25 center 
at -4.375 1.25
\put{$*$} at -4.375 1.25  
\ellipticalarc axes ratio 1:1 180 degrees from -4.5 -1.25 center 
at -4.375 -1.25 
\plot -2 0.75  -2 -0.75 /
\plot -1 0.75  -1 -0.75 /
\plot  0 0.75   0 -0.75 /
\plot  1 0.75   1 -0.75 /
\plot  2 0.75   2 -0.75 /
\plot  3 0.75   3 -0.75 /
\setlinear
\plot -3.3 0.25  -4.1 0.25 /
\ellipticalarc axes ratio 2:1 180 degrees from -4.65 0.25  center 
at -4.65 0 
\plot -4.65 -0.25  -3.3 -0.25 /
\setquadratic
\plot  -3.3 0.25  -3.05 .45  -3 0.75 /
\plot  -3.3 -0.25  -3.05 -0.45  -3 -0.75 /
\endpicture
.
\label{braidfig1}
\end{equation}
For $i=1,\ldots, k$ define
\begin{equation}\label{BraidMurphy}
X^{\varepsilon_i}=T_{i-1}T_{i-2}\cdots T_2T_1 
X^{\varepsilon_1}T_1T_2\cdots T_{i-2}T_{i-1} = 
~~\beginpicture
\setcoordinatesystem units <.5cm,.5cm>         
\setplotarea x from -5 to 3.5, y from -2 to 2    
\put{${}^i$} at 1 1.2 
\put{$\bullet$} at -3 0.75      %
\put{$\bullet$} at -2 0.75      %
\put{$\bullet$} at -1 0.75      %
\put{$\bullet$} at  0 0.75      
\put{$\bullet$} at  1 0.75      %
\put{$\bullet$} at  2 0.75      %
\put{$\bullet$} at  3 0.75      %
\put{$\bullet$} at -3 -0.75          %
\put{$\bullet$} at -2 -0.75          %
\put{$\bullet$} at -1 -0.75          %
\put{$\bullet$} at  0 -0.75          
\put{$\bullet$} at  1 -0.75          %
\put{$\bullet$} at  2 -0.75          %
\put{$\bullet$} at  3 -0.75          %
\plot -4.5 1.25 -4.5 -0.13 /
\plot -4.5 -0.37   -4.5 -1.25 /
\plot -4.25 1.25 -4.25  -0.13 /
\plot -4.25 -0.37 -4.25 -1.25 /
\ellipticalarc axes ratio 1:1 360 degrees from -4.5 1.25 center 
at -4.375 1.25
\put{$*$} at -4.375 1.25  
\ellipticalarc axes ratio 1:1 180 degrees from -4.5 -1.25 center 
at -4.375 -1.25 
\plot -3 0.75  -3 -0.1 /
\plot -2 0.75  -2 -0.1 /
\plot -1 0.75  -1 -0.1 /
\plot  0 0.75   0 -0.1 /
\plot -3 -.35  -3 -0.75 /
\plot -2 -.35   -2 -0.75 /
\plot -1 -.35   -1 -0.75 /
\plot  0 -.35    0 -0.75 /
\plot  2 0.75   2 -0.75 /
\plot  3 0.75   3 -0.75 /
\setlinear
\plot -3.2 0.25  -4.1 0.25 /
\plot -2.8 0.25  -2.2 0.25 /
\plot -1.8 0.25  -1.2 0.25 /
\plot -.8 0.25  -.2 0.25 /
\plot  .2 0.25  .5 0.25 /
\plot -3.3 -.25  .5 -.25 /
\ellipticalarc axes ratio 2:1 180 degrees from -4.65 0.25  center 
at -4.65 0 
\plot -4.65 -0.25  -3.3 -0.25 /
\setquadratic
\plot  .5 0.25  .9 .45  1 0.75 /
\plot  .5  -0.25  .9 -0.45 1 -0.75 /
\endpicture.
\end{equation}
The pictorial computation
$$
X^{\varepsilon_j}X^{\varepsilon_i}=
\beginpicture
\setcoordinatesystem units <.5cm,.5cm>         
\setplotarea x from -5.5 to 3.5, y from -2 to 2    
\put{\scriptsize $j$} at 1 2
\put{\scriptsize $i$} at -1 2 
\put{$\bullet$} at -3 1.5      %
\put{$\bullet$} at -2 1.5      %
\put{$\bullet$} at -1 1.5      %
\put{$\bullet$} at  0 1.5      
\put{$\bullet$} at  1 1.5      %
\put{$\bullet$} at  2 1.5      %
\put{$\bullet$} at  3 1.5      %
\put{$\bullet$} at -3 -1.6          %
\put{$\bullet$} at -2 -1.6          %
\put{$\bullet$} at -1 -1.6          %
\put{$\bullet$} at  0 -1.6          
\put{$\bullet$} at  1 -1.6          %
\put{$\bullet$} at  2 -1.6          %
\put{$\bullet$} at  3 -1.6          %
\plot -4.5 2 -4.5 0.45 /
\plot -4.25 2 -4.25  0.45 /
\plot -4.5 0.15 -4.5 -0.87 /
\plot -4.25 0.15 -4.25  -0.87 /
\plot -4.5 -1.18   -4.5 -2 /
\plot -4.25 -1.18 -4.25 -2 /
\ellipticalarc axes ratio 1:1 360 degrees from -4.5 2 center 
at -4.375 2
\put{$*$} at -4.375 2  
\ellipticalarc axes ratio 1:1 180 degrees from -4.5 -2 center 
at -4.375 -2 
\plot -3 1.5  -3 0.5 /
\plot -2 1.5  -2 0.5 /
\plot -1 1.5  -1 0.5 /
\plot  0 1.5   0 0.5 /
\plot -3 .1  -3 -.85 /
\plot -2 .1  -2 -.85 /
\plot -1 .1  -1 0 /
\plot  0 .1   0 -1.5 /
\plot  1 0   1 -1.5 /
\plot -3 -1.15  -3 -1.5 /
\plot -2 -1.15   -2 -1.5 /
\plot  2 1.5   2 -1.5 /
\plot  3 1.5   3 -1.5 /
\setlinear
\plot -3.2 1  -4.1 1 /
\plot -2.8 1  -2.2 1 /
\plot -1.8 1  -1.2 1 /
\plot -.8 1  -.2 1 /
\plot  .2 1  .5 1 /
\plot -3.3 .3  .5 .3 /
\ellipticalarc axes ratio 2:1 180 degrees from -4.65 1  center 
at -4.65 .65 
\plot -4.65 0.3  -3.3 0.3 /
\setquadratic
\plot  .5 1  .9 1.2  1 1.5 /
\plot  .5  0.3  .9 .2 1 0 /
\setlinear
\plot -3.2 -.5  -4.1 -.5 /
\plot -2.8 -.5  -2.2 -.5 /
\plot -1.8 -.5  -1.5 -.5 /
\plot -3.3 -1  -1.5 -1 /
\ellipticalarc axes ratio 1:1 180 degrees from -4.65 -.5  center 
at -4.65 -.75 
\plot -4.65 -1  -3.3 -1 /
\setquadratic
\plot  -1.5 -.5  -1.1 -.3  -1 0 /
\plot  -1.5  -1  -1.1 -1.2 -1 -1.5 /
\endpicture
=
\beginpicture
\setcoordinatesystem units <.5cm,.5cm>         
\setplotarea x from -5.5 to 3.5, y from -2 to 2    
\put{\scriptsize $j$} at 1 2
\put{\scriptsize $i$} at -1 2 
\put{$\bullet$} at -3 1.5      %
\put{$\bullet$} at -2 1.5      %
\put{$\bullet$} at -1 1.5      %
\put{$\bullet$} at  0 1.5      
\put{$\bullet$} at  1 1.5      %
\put{$\bullet$} at  2 1.5      %
\put{$\bullet$} at  3 1.5      %
\put{$\bullet$} at -3 -1.6          %
\put{$\bullet$} at -2 -1.6          %
\put{$\bullet$} at -1 -1.6          %
\put{$\bullet$} at  0 -1.6          
\put{$\bullet$} at  1 -1.6          %
\put{$\bullet$} at  2 -1.6          %
\put{$\bullet$} at  3 -1.6          %
\plot -4.5 2 -4.5 0.62 /
\plot -4.25 2 -4.25  0.62 /
\plot -4.5 0.35 -4.5 -0.87 /
\plot -4.25 0.35 -4.25  -0.87 /
\plot -4.5 -1.18   -4.5 -2 /
\plot -4.25 -1.18 -4.25 -2 /
\ellipticalarc axes ratio 1:1 360 degrees from -4.5 2 center 
at -4.375 2
\put{$*$} at -4.375 2  
\ellipticalarc axes ratio 1:1 180 degrees from -4.5 -2 center 
at -4.375 -2 
\plot -3 1.5  -3 0.65 /
\plot -3 .3  -3 -.85 /
\plot -3 -1.15  -3 -1.5 /
\plot -2 1.5  -2 0.65 /
\plot -2 .3  -2 -.85 /
\plot -2 -1.15   -2 -1.5 /
\plot -1 0  -1 -.85 /
\plot -1 -1.15   -1 -1.5 /
\plot  0 1.5   0 -.85 /
\plot 0 -1.15   0 -1.5 /
\plot  1 1.5   1 0 /
\plot  2 1.5   2 -1.5 /
\plot  3 1.5   3 -1.5 /
\setlinear
\plot -3.2 1  -4.1 1 /
\plot -2.8 1  -2.2 1 /
\plot -1.8 1  -1.5 1 /
\plot -3.3 .5  -1.5 .5 /
\ellipticalarc axes ratio 1:1 180 degrees from -4.65 1  center 
at -4.65 .75 
\plot -4.65 0.5  -3.3 0.5 /
\setquadratic
\plot  -1.5 1  -1.1 1.2  -1 1.5 /
\plot  -1.5  0.5  -1.1 .3 -1 0 /
\setlinear
\plot -3.2 -.3  -4.1 -.3 /
\plot -2.8 -.3  -2.2 -.3 /
\plot -1.8 -.3  -1.2 -.3 /
\plot -.8 -.3  -.2 -.3 /
\plot  .2 -.3  .5 -.3 /
\plot -3.3 -1  .5 -1 /
\ellipticalarc axes ratio 2:1 180 degrees from -4.65 -.3  center 
at -4.65 -.65 
\plot -4.65 -1  -3.3 -1 /
\setquadratic
\plot  .5 -.3  .9 -.15  1 0 /
\plot  .5  -1  .9 -1.2 1 -1.5 /
\endpicture
=X^{\varepsilon_i}X^{\varepsilon_j}
$$
shows that the $X^{\varepsilon_i}$ pairwise commute. 

Let $\fg$ be a finite-dimensional complex Lie algebra with a symmetric nondegenerate
$\ad$-invariant bilinear form, and let
$U=U_h\fg$ be the Drinfel'd-Jimbo quantum group corresponding to $\fg$. The quantum group $U$ is a ribbon Hopf algebra with invertible $\cR$-matrix 
$$\cR=\sum_{\cR} R_1\otimes R_2
\qquad\hbox{in}\quad U\otimes U, \qquad \text{ and ribbon element } v = e^{-h\rho}u,$$ 
where $u = \sum_{\cR} S(R_2)R_1$ (see \cite[Corollary (2.15)]{LR}).
For $U$-modules $M$ and $N$, the map
\begin{equation}\label{preRMNdefn}
\beginpicture
\setcoordinatesystem units <1cm,.5cm>         
\setplotarea x from -8 to 2, y from 0 to 1.5    
\put{$
\begin{matrix}
\check R_{MN}\colon &M\otimes N &\longrightarrow &N\otimes M\\
&m\otimes n &\longmapsto &\displaystyle{
\sum_{\cR} R_2 n\otimes R_1 m }
\end{matrix}
$} at -5 1
\put{$M\otimes N$} at 0.5 2.4
\put{$N\otimes M$} at 0.5 0.1
\put{$\bullet$} at  0.1 1.9      
\put{$\bullet$} at  0.9 1.9      %
\put{$\bullet$} at  0.1 .6          
\put{$\bullet$} at  0.9 .6          %
\setquadratic
\plot  0.1 .6  .15 .9  .4 1.15 /
\plot  .6 1.35  .85 1.6  0.9 1.9 /
\plot 0.1 1.9  .15 1.6  .5 1.25  .85 .9  0.9 .6 /
\endpicture
\end{equation}
is a $U$-module isomorphism.  The quasitriangularity of a ribbon Hopf algebra provides the 
braid relation (see, for example, \cite[(2.12)]{OR}),
\begin{align*}
\beginpicture
\setcoordinatesystem units <1cm,.5cm>         
\setplotarea x from 0 to 2, y from -3 to 2.6    
\put{$M\otimes N\otimes P$} at 0.9 2.3
\put{$P\otimes N\otimes M$} at 0.9 -2.5
\put{$\bullet$} at  0.15 1.9      
\put{$\bullet$} at  0.9 1.9      %
\put{$\bullet$} at  1.65 1.9      %
\put{$\bullet$} at  0.15 -2          
\put{$\bullet$} at  0.9 -2          %
\put{$\bullet$} at  1.65 -2          %
\plot  1.65 1.9   1.65 .6 /
\plot  0.15 .6   0.15 -0.7 /
\plot  1.65 -0.7   1.65 -2 /
\setquadratic
\plot  0.15 .6  .2 .9  .45 1.15 /
\plot  .65 1.35  .85 1.6  0.9 1.9 /
\plot 0.15 1.9  .2 1.6  .55 1.25  .85 .9  0.9 .6 /
\plot  0.9 -0.7  0.95 -0.4  1.15 -0.15 /
\plot  1.35 .05  1.6 .3  1.65 .6 /
\plot 0.9 .6  0.95 .3  1.25 -0.05  1.6 -0.4  1.65 -0.7 /
\plot  0.15 -2  .2 -1.7  .45 -1.45 /
\plot  .65 -1.25  .85 -1  0.9 -0.7 /
\plot 0.15 -0.7  .2 -1  .55 -1.35  .85 -1.7  0.9 -2 /
\endpicture
~~&=~~
\beginpicture
\setcoordinatesystem units <1cm,.5cm>         
\setplotarea x from 0 to 2, y from -3 to 2.6    
\put{$M\otimes N\otimes P$} at 0.9 2.3
\put{$P\otimes N\otimes M$} at 0.9 -2.5
\put{$\bullet$} at  0.15 1.9      
\put{$\bullet$} at  0.9 1.9      %
\put{$\bullet$} at  1.65 1.9      %
\put{$\bullet$} at  0.15 -2          
\put{$\bullet$} at  0.9 -2          %
\put{$\bullet$} at  1.65 -2          %
\plot  0.15 1.9   0.15 .6 /
\plot  1.65 .6   1.65 -0.7 /
\plot  0.15 -0.7   0.15 -2 /
\setquadratic
\plot  0.9 0.6  0.95 0.9  1.15 1.15 /
\plot  1.35 1.35  1.6 1.6  1.65 1.9 /
\plot 0.9 1.9  0.95 1.6  1.25 1.25  1.6 0.9  1.65 0.6 /
\plot  0.15 -0.7  .2 -0.4  .45 -0.15 /
\plot  .65 .05  .85 .3  0.9 .6 /
\plot 0.15 .6  .2 .3  .55 -0.05  .85 -0.4  0.9 -0.7 /
\plot  0.9 -2  0.95 -1.7  1.15 -1.45 /
\plot  1.35 -1.25  1.6 -1  1.65 -0.7 /
\plot 0.9 -0.7  0.95 -1  1.25 -1.35  1.6 -1.7  1.65 -2 /
\endpicture
\\
(\check R_{MN}\otimes \id_P)
(\id_N\otimes \check R_{MP})
(\check R_{NP}\otimes \id_M)
&=
(\id_M\otimes \check R_{NP})
(\check R_{MP}\otimes \id_N)
(\id_P\otimes \check R_{MN}).
\end{align*}

\begin{thm} \label{affaction} Let $\fg$ be a finite-dimensional complex Lie algebra  with a symmetric nondegenerate $\ad$-invariant bilinear form, 
let $U=U_h\fg$ be the corresponding Drinfeld-Jimbo quantum group and let
$C=Z(U)$ be the center of $U_h\fg$.  Let $M$ and $V$ be $U$-modules.
Then $M\otimes V^{\otimes k}$ is a $C B_k$-module with
action given by
\begin{equation}\label{BraidRep}
\begin{matrix}
\Phi\colon &C B_k &\longrightarrow
&\End_{U}(M\otimes V^{\otimes k}) \\
&T_i &\longmapsto &\check R_i, &\qquad &1\le i\le k-1,\\
&X^{\varepsilon_1} &\longmapsto &\check R_0^2, \\
&z &\longmapsto &z_M,
\end{matrix}
\end{equation}
where $z_M = z\otimes \id_V^{\otimes k}$, 
$$\check R_i = \id_M\otimes \id_V^{\otimes (i-1)}
\otimes \check R_{VV}\otimes \id_V^{\otimes (k-i-1)}
\qquad\hbox{and}\qquad
\check R_0^2 = (\check R_{MV}\check R_{VM})\otimes \id_V^{\otimes (k-1)},
$$
with $\check R_{MV}$ as in \eqref{preRMNdefn}.
The $CB_k$ action commutes with the $U$-action on $M\otimes V^{\otimes k}$.
\end{thm}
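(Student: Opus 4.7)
The strategy is to verify that $\Phi$ respects each defining relation of the affine braid group $B_k$, together with the requirement that $z\in C=Z(U)$ act centrally; the $U$-equivariance of the image is then a byproduct of the construction.

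First, each generator lands in $\End_U(M\otimes V^{\otimes k})$. Since $U$ is quasitriangular, each braiding $\check R_{MN}$ is a $U$-module isomorphism, so $\check R_i$ and $\check R_0^2$ commute with the $U$-action. For $z\in Z(U)$, the operator $z_M=z\otimes\id_V^{\otimes k}$ commutes with the iterated coproduct action of any $u\in U$, because centrality of $z$ means $z$ commutes with the first tensor factor $u_{(1)}$; hence $z_M\in\End_U(M\otimes V^{\otimes k})$.

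Next, the ``distant commutation'' relations $T_iT_j=T_jT_i$ for $|i-j|\geq 2$ and $X^{\varepsilon_1}T_i=T_iX^{\varepsilon_1}$ for $i\geq 2$ follow because the respective operators act on disjoint tensor factors. The braid relation $T_iT_{i+1}T_i=T_{i+1}T_iT_{i+1}$ is the standard three-strand braid identity for $\check R_{VV}$ on $V^{\otimes 3}$ displayed just before the statement, which is itself a direct consequence of quasitriangularity. Commutation of $z_M$ with $\check R_i$ is immediate from disjoint tensor factors; commutation with $\check R_0^2$ follows from naturality of the braiding applied to the $U$-module map $z_M\colon M\to M$, giving
\[
\check R_{MV}(z_M\otimes\id_V)=(\id_V\otimes z_M)\check R_{MV}, \qquad \check R_{VM}(\id_V\otimes z_M)=(z_M\otimes\id_V)\check R_{VM},
\]
and these compose to yield $\check R_0^2(z_M\otimes\id_V)=(z_M\otimes\id_V)\check R_0^2$ on $M\otimes V$.

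The main technical step, and I expect the chief obstacle, is the mixed braid relation $X^{\varepsilon_1}T_1X^{\varepsilon_1}T_1=T_1X^{\varepsilon_1}T_1X^{\varepsilon_1}$. Setting $X^{\varepsilon_2}=T_1X^{\varepsilon_1}T_1$, this is equivalent to $X^{\varepsilon_1}X^{\varepsilon_2}=X^{\varepsilon_2}X^{\varepsilon_1}$. On $M\otimes V\otimes V$, $\Phi(X^{\varepsilon_i})$ is the full twist of the $i$-th $V$-strand about the $M$-pole, and the pictorial computation shown just before the statement of the theorem (demonstrating $X^{\varepsilon_j}X^{\varepsilon_i}=X^{\varepsilon_i}X^{\varepsilon_j}$ by braid isotopy) records precisely this commutativity. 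Algebraically, the argument proceeds by applying the three-strand braid relation with mixed colorings $M,V,V$ in the braided monoidal category of $U$-modules, reducing the identity to one verifiable by iterated naturality of $\check R$, as in the pictorial computation preceding the theorem. Combining these verifications yields the required ring homomorphism $\Phi\colon CB_k\to\End_U(M\otimes V^{\otimes k})$.
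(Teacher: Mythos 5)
Your proposal is correct and follows essentially the same route as the paper: the distant commutations come from disjoint tensor factors, the braid relation and the mixed relation \eqref{affrel3} come from quasitriangularity/naturality of $\check R$ (which the paper records as the pictorial isotopies in its proof), and centrality of $z$ gives $U$-equivariance of $z_M$. No gaps.
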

\begin{proof}
The relations \eqref{affrel1} and \eqref{affrel4} are consequences of the definition of the 
action of $T_i$ and $X^{\varepsilon_1}$. The relations \eqref{affrel2} and \eqref{affrel3} 
follow from the following computations:
$$
\check R_i\check R_{i+1}\check R_i 
=
\beginpicture
\setcoordinatesystem units <.3cm,.3cm>         
\setplotarea x from 0 to 2, y from -2 to 2    
\plot  1.65  2.6   1.65  1.3 /
\plot  0.15  1.3     0.15  -0.1 /
\plot  1.65  -0.1   1.65  -1.4 /
\setsolid
\setquadratic
\plot  0.15 -1.4  .2 -1.1  .45 -0.85 /
\plot  .65 -0.7  .85 -0.3  0.9 0 /
\plot 0.15 0  .2 -0.3  .55 -.8  .85 -1.1  0.9 -1.4 /
\plot  0.9 0  0.95 0.3  1.15 0.55 /
\plot  1.35 0.75  1.6 1  1.65 1.3 /
\plot 0.9 1.3  0.95 1  1.25 .65  1.6 .3  1.65 0 /
\plot  0.15 1.3  .2 1.6  .45 1.85 /
\plot  .65 2.05  .85 2.3  0.9 2.6 /
\plot 0.15 2.6  .2 2.3  .55 1.95  .85 1.6  0.9 1.3 /
\endpicture
=
\beginpicture
\setcoordinatesystem units <.3cm,.3cm>         
\setplotarea x from 0 to 2, y from -2 to 2    
\plot  0.15  2.6   0.15  1.3 /
\plot  1.65  1.3   1.65  0 /
\plot  0.15  0   0.15  -1.4 /
\setsolid
\setquadratic
\plot  0.9 1.3  0.95 1.6  1.15  1.85 /
\plot  1.35 2.05  1.6 2.3  1.65 2.6 /
\plot 0.9 2.6  0.95 2.3  1.25 1.95  1.6 1.6  1.65 1.3 /
\plot  0.15 0  .2 .3  .45  0.55 /
\plot  .65 .75  .85 1  0.9 1.3 /
\plot 0.15 1.3  .2 1  .55 .65  .85 .3  0.9 0 /
\plot  0.9 -1.4  0.95 -1.1  1.15 -0.85 /
\plot  1.35 -0.7  1.6 -0.3  1.65 0 /
\plot 0.9 -0  0.95 -0.3  1.25 -.8  1.6 -1.1  1.65 -1.4 /
\endpicture
= \check R_{i+1}\check R_i \check R_{i+1}$$
and
$$
\check R_0^2\check R_1\check R_0^2\check R_1 
=
\beginpicture
\setcoordinatesystem units <.3cm,.3cm>         
\setplotarea x from 0 to 2, y from -2 to 2    
\put{$\cdot$} at  0.15 3.9      
\put{$\cdot$} at  0.9 3.9      %
\put{$\cdot$} at  1.65 3.9      %
\put{$\cdot$} at  0.15  -4          
\put{$\cdot$} at  0.9  -4          %
\put{$\cdot$} at  1.65  -4          %
\plot  0.15  -2.7   0.15  -4 /
\plot  1.65  3.9   1.65  2.6 /
\plot  1.65  2.6   1.65  1.3 /
\plot  0.15  1.3     0.15  -0.1 /
\plot  1.65  -0.1   1.65  -1.4 /
\plot  1.65  -1.4   1.65  -2.7 /
\setsolid
\setquadratic
\plot  0.15 -1.4  .2 -1.1  .45 -0.85 /
\plot  .65 -0.7  .85 -0.3  0.9 0 /
\plot 0.15 0  .2 -0.3  .55 -.8  .85 -1.1  0.9 -1.4 /
\plot  0.9 0  0.95 0.3  1.15 0.55 /
\plot  1.35 0.75  1.6 1  1.65 1.3 /
\plot 0.9 1.3  0.95 1  1.25 .65  1.6 .3  1.65 0 /
\plot  0.15 -2.7  .2 -2.4  .45  -2.15 /
\plot  .65 -2.05  .85 -1.7  0.9 -1.3 /
\plot 0.15 -1.4  .2 -1.7  .55 -2.05  .85 -2.4  0.9 -2.7 /
\plot  0.9 -4  0.95 -3.7  1.15 -3.45 /
\plot  1.35 -3.25  1.6 -3  1.65 -2.7 /
\plot 0.9 -2.7  0.95 -3  1.25 -3.35  1.6 -3.7  1.65 -4 /
\plot  0.15 2.6  .2 2.9  .45 3.15 /
\plot  .65 3.35  .85 3.6  0.9 3.9 /
\plot 0.15 3.9  .2 3.6  .55 3.25  .85 2.9  0.9 2.6 /
\plot  0.15 1.3  .2 1.6  .45 1.85 /
\plot  .65 2.05  .85 2.3  0.9 2.6 /
\plot 0.15 2.6  .2 2.3  .55 1.95  .85 1.6  0.9 1.3 /
\endpicture
=
\beginpicture
\setcoordinatesystem units <.3cm,.3cm>         
\setplotarea x from 0 to 2, y from -2 to 2    
\put{$\cdot$} at  0.15 3.9      
\put{$\cdot$} at  0.9 3.9      %
\put{$\cdot$} at  1.65 3.9      %
\put{$\cdot$} at  0.15  -4          
\put{$\cdot$} at  0.9  -4          %
\put{$\cdot$} at  1.65  -4          %
\plot  1.65  3.9   1.65  2.6 /
\plot  0.15  2.6   0.15  1.3 /
\plot  1.65  1.3   1.65  0 /
\plot  0.15  0   0.15  -1.4 /
\plot  1.65  -1.4   1.65  -2.7 /
\plot  0.15  -2.7   0.15  -4 /
\setdashes
\plot  0  2.6   2  2.6 /
\plot  0  -1.4   2  -1.4 /
\setsolid
\setquadratic
\plot  0.15 2.6  .2 2.9  .45 3.15 /
\plot  .65 3.35  .85 3.6  0.9 3.9 /
\plot 0.15 3.9  .2 3.6  .55 3.25  .85 2.9  0.9 2.6 /
\plot  0.9 1.3  0.95 1.6  1.15  1.85 /
\plot  1.35 2.05  1.6 2.3  1.65 2.6 /
\plot 0.9 2.6  0.95 2.3  1.25 1.95  1.6 1.6  1.65 1.3 /
\plot  0.15 0  .2 .3  .45  0.55 /
\plot  .65 .75  .85 1  0.9 1.3 /
\plot 0.15 1.3  .2 1  .55 .65  .85 .3  0.9 0 /
\plot  0.9 -1.4  0.95 -1.1  1.15 -0.85 /
\plot  1.35 -0.7  1.6 -0.3  1.65 0 /
\plot 0.9 -0  0.95 -0.3  1.25 -.8  1.6 -1.1  1.65 -1.4 /
\plot  0.15 -2.7  .2 -2.4  .45 -2.15 /
\plot  .65 -1.95  .85 -1.7  0.9 -1.4 /
\plot 0.15 -1.4  .2 -1.7  .55 -2.05  .85 -2.4  0.9 -2.7 /
\plot  0.9 -4  0.95 -3.7  1.15 -3.45 /
\plot  1.35 -3.25  1.6 -3  1.65 -2.7 /
\plot 0.9 -2.7  0.95 -3  1.25 -3.35  1.6 -3.7  1.65 -4 /
\endpicture
=
\beginpicture
\setcoordinatesystem units <.3cm,.3cm>         
\setplotarea x from 0 to 2, y from -2 to 2    
\put{$\cdot$} at  0.15 3.9      
\put{$\cdot$} at  0.9 3.9      %
\put{$\cdot$} at  1.65 3.9      %
\put{$\cdot$} at  0.15  -4          
\put{$\cdot$} at  0.9  -4          %
\put{$\cdot$} at  1.65  -4          %
\plot  0.15  3.9   0.15  2.6 /
\plot  1.65  2.6   1.65  1.3 /
\plot  0.15  1.3   0.15  0 /
\plot  1.65  0   1.65  -1.4 /
\plot  0.15  -1.4   0.15  -2.7 /
\plot  1.65  -2.7   1.65  -4 /
\setdashes
\plot  0  0   2  0 /
\setsolid
\setquadratic
\plot  0.15 -1.4  .2 -1.1  .45 -0.85 /
\plot  .65 -0.7  .85 -0.3  0.9 0 /
\plot 0.15 0  .2 -0.3  .55 -.8  .85 -1.1  0.9 -1.4 /
\plot  0.9 -2.7  0.95 -2.4  1.15  -2.15 /
\plot  1.35 -1.95  1.6 -1.7  1.65 -1.4 /
\plot 0.9 -1.4  0.95 -1.7  1.25 -2.05  1.6 -2.4  1.65 -2.7 /
\plot  0.15 -4  .2 -3.7  .45  -3.45 /
\plot  .65 -3.25  .85 -3  0.9 -2.7 /
\plot 0.15 -2.7  .2 -3  .55 -3.35  .85 -3.7  0.9 -4 /
\plot  0.9 2.6  0.95 2.9  1.15 3.15 /
\plot  1.35 3.3  1.6 3.7  1.65 4 /
\plot 0.9 4  0.95 3.7  1.25 3.2  1.6 2.9  1.65 2.6 /
\plot  0.15 1.3  .2 1.6  .45 1.85 /
\plot  .65 2.05  .85 2.3  0.9 2.6 /
\plot 0.15 2.6  .2 2.3  .55 1.95  .85 1.6  0.9 1.3 /
\plot  0.9 0  0.95 0.3  1.15 0.55 /
\plot  1.35 0.75  1.6 1  1.65 1.3 /
\plot 0.9 1.3  0.95 1  1.25 .65  1.6 .3  1.65 0 /
\endpicture
=
\beginpicture
\setcoordinatesystem units <.3cm,.3cm>         
\setplotarea x from 0 to 2, y from -2 to 2    
\put{$\cdot$} at  0.15 3.9      
\put{$\cdot$} at  0.9 3.9      %
\put{$\cdot$} at  1.65 3.9      %
\put{$\cdot$} at  0.15  -4          
\put{$\cdot$} at  0.9  -4          %
\put{$\cdot$} at  1.65  -4          %
\plot  0.15  3.9   0.15  2.6 /
\plot  1.65  2.6   1.65  1.3 /
\plot  1.65  1.3   1.65  0 /
\plot  0.15  0     0.15  -1.4 /
\plot  1.65  -1.4   1.65  -2.7 /
\plot  1.65  -2.7   1.65  -4 /
\setdashes
\plot  0  1.3   2  1.3 /
\plot  0  -2.7   2  -2.7 /
\setsolid
\setquadratic
\plot  0.15 -2.7  .2 -2.4  .45 -2.15 /
\plot  .65 -1.95  .85 -1.7  0.9 -1.4 /
\plot 0.15 -1.4  .2 -1.7  .55 -2.05  .85 -2.4  0.9 -2.7 /
\plot  0.9 -1.4  0.95 -1.1  1.15  -0.85 /
\plot  1.35 -0.7  1.6 -0.3  1.65 0 /
\plot 0.9 0  0.95 -0.3  1.25 -0.8  1.6 -1.1  1.65 -1.4 /
\plot  0.15 -4  .2 -3.7  .45  -3.45 /
\plot  .65 -3.25  .85 -3  0.9 -2.7 /
\plot 0.15 -2.7  .2 -3  .55 -3.35  .85 -3.7  0.9 -4 /
\plot  0.9 2.6  0.95 2.9  1.15 3.15 /
\plot  1.35 3.3  1.6 3.7  1.65 4 /
\plot 0.9 4  0.95 3.7  1.25 3.2  1.6 2.9  1.65 2.6 /
\plot  0.15 1.3  .2 1.6  .45 1.85 /
\plot  .65 2.05  .85 2.3  0.9 2.6 /
\plot 0.15 2.6  .2 2.3  .55 1.95  .85 1.6  0.9 1.3 /
\plot  0.15 0  .2 0.3  .45 0.55 /
\plot  .65 .75  .85 1  0.9 1.3 /
\plot 0.15 1.3  .2 1  .55 .65  .85 .3  0.9 0 /
\endpicture
= \check R_1\check R_0^2\check R_1\check R_0^2. 
$$
\end{proof}

Let $v=e^{-h\rho}u$ be the ribbon element in $U=U_h\fg$.  For a $U_h\fg$-module $M$
define
\begin{equation}\label{qcasimir}
\begin{matrix}
C_M\colon &M &\longrightarrow &M \\
&m &\longmapsto &vm 
\end{matrix}
\qquad\hbox{so that}\qquad
C_{M\otimes N} =
(\check R_{MN}\check R_{NM})^{-1}
(C_M\otimes C_N)
\end{equation}
(see \cite[Prop. 3.2]{Dr}).
If $M$ is a $U_h\fg$-module generated by a highest weight vector 
$v^+_\lambda$ of weight $\lambda$, then
\begin{equation}\label{qCasvalue} 
C_M = q^{-\langle \lambda,\lambda+2\rho\rangle} \id_M,
\qquad\hbox{where}\quad q = e^{h/2}
\end{equation}
(see \cite[Prop. 2.14]{LR} or \cite[Prop. 5.1]{Dr}).
From \eqref{qCasvalue} and the relation \eqref{qcasimir} it follows that if $M=L(\mu)$ and 
$N=L(\nu)$ are finite-dimensional irreducible $U_h\fg$-modules of highest weights
$\mu$ and $\nu$ respectively, 
then $\check R_{MN}\check R_{NM}$ acts on the
$L(\lambda)$-isotypic component 
$L(\lambda)^{\oplus c_{\mu\nu}^\lambda}$
of the decomposition
\begin{equation}\label{fulltwist} 
L(\mu)\otimes L(\nu) = \bigoplus_\lambda L(\lambda)^{\oplus c_{\mu\nu}^\lambda}
\qquad\hbox{by the scalar}\qquad
q^{\langle\lambda,\lambda+2\rho\rangle 
-\langle\mu,\mu+2\rho\rangle 
-\langle\nu,\nu+2\rho\rangle}.
\end{equation}
By the definition of $X^{\varepsilon_i}$ in \eqref{BraidMurphy},
\begin{equation}\label{XiasRmatrix}
\Phi(X^{\varepsilon_i}) = 
 \check R_{M\otimes V^{\otimes(i-1)},V}\check R_{V, M\otimes V^{\otimes {(i-1)}}}
= 
\beginpicture
\setcoordinatesystem units <.5cm,1cm>         
\setplotarea x from -5.5 to 4.5, y from -1.25 to 1.25    
\put{${}^i$} at 1 1.2 
\put{$\bullet$} at -3 0.75      %
\put{$\bullet$} at -2 0.75      %
\put{$\bullet$} at -1 0.75      %
\put{$\bullet$} at  0 0.75      
\put{$\bullet$} at  1 0.75      %
\put{$\bullet$} at  2 0.75      %
\put{$\bullet$} at  3 0.75      %
\put{$\bullet$} at -3 -0.75          %
\put{$\bullet$} at -2 -0.75          %
\put{$\bullet$} at -1 -0.75          %
\put{$\bullet$} at  0 -0.75          
\put{$\bullet$} at  1 -0.75          %
\put{$\bullet$} at  2 -0.75          %
\put{$\bullet$} at  3 -0.75          %
\setquadratic
\plot -4.5 1.25  -4.4 .9 -4 .6  / 
\plot  -4 .6 -3.4 .25 -3.25 0 /
\plot -4.25 1.25  -4.15 .9 -3.75 .6  / 
\plot  -3.75 .6 -3.15 .25 -3.0 0 /
\plot -4.5 -1.25  -4.4 -.9 -4 -.6  / 
\plot -4.25 -1.25  -4.15 -.9 -3.75 -.6  / 
\plot -3.25 0 -3.30 -.09  -3.40 -.17 /
\plot -3.00 0 -3.05 -.09 -3.12 -.17 /
\plot  -4.00   -.6 -3.70 -.5 -3.40 -.3 /
\plot  -3.75   -.6 -3.45  -.5 -3.15 -.3 /

\ellipticalarc axes ratio 1:1 360 degrees from -4.5 1.25 center 
at -4.375 1.25
\put{$*$} at -4.375 1.25  
\ellipticalarc axes ratio 1:1 180 degrees from -4.5 -1.25 center 
at -4.375 -1.25 
\setquadratic
\plot -3 0.75  -2.9  .55  -2.5 .375 /
\plot -2.5 .375  -2.1  .25  -2 0 /
\plot -2    0.75     -1.9  .55  -1.5 .375 /
\plot -1.5 0.375  -1.1  .25  -1 0 /
\plot -1    0.75     -0.9  .55  -0.5 .375 /
\plot -0.5 0.375  -0.1  .25  0  0 /
\plot 0    0.75     .1  .55  .5 .375 /
\plot .5  0.375   .9  .25  1  0 /
\plot -3 -0.75  -2.9  -.55  -2.5 -.380 /
\plot -2.3  -.26  -2.1  -.20  -2 0 /
\plot -2    -0.75     -1.9  -.55  -1.7 -.45 /
\plot -1.4 -0.28  -1.1  -.20  -1 0 /
\plot -1    -0.75   -0.9  -.6  -0.7 -.48 /
\plot -0.4 -0.32  -0.1  -.2  0  0 /
\plot 0    -0.75     .1  -.65  .2 -.55 /
\plot .55  -0.38   .9  -.25  1  0 /
\setlinear
\plot 2 .75  2 -.75 /
\plot 3 .75 3 -.75 /
\setquadratic
\plot  1 .75 .9 .6  .4 .47 /
\plot -3.5 .22 -3.9  .15  -4 0 /
\plot -1 -.375  -3.25  -.25  -4 0 /
\plot 1 -.75  .5  -.5 -1 -.375 / 
\setlinear
\plot .1 .45  -.2 .4 /
\plot -.8 .38  -1.1 .37 /
\plot -1.7 .36 -2.1 .35 /
\plot -2.6 .33  -3 .29 /
\setdashes
\plot -5 0 4 0 /
\endpicture,
\end{equation}
so that, by \eqref{qcasimir},
the eigenvalues of $\Phi(X^{\varepsilon_i})$ are functions of the eigenvalues of the Casimir.

\section{Actions of classical type tantalizers} 
\label{sec:classical-actions}

In this section, we define the affine Birman-Murakami-Wenzl (BMW) algebra $W_k$ and its degenerate version $\cW_k$, exactly following our treatment in \cite{DRV}.
Just as the affine BMW algebras $W_k$ and the affine Hecke algebras $H_k$ are quotients
of the group algebra of affine braid group $CB_k$, 
the degenerate affine BMW algebras $\cW_k$ and 
the degenerate affine Hecke algebras $\cH_k$  are quotients of $\cB_k$. 
Moreover, the tensor space actions defined in Theorems \ref{degaction} and Theorem \ref{affaction}
factor through these quotients in important cases.  The affine and degenerate affine BMW algebras arise 
when $\fg$ is $\fso_n$ or $\fsp_n$
and $V$ is the first fundamental representation; similarly, the 
affine and degenerate affine Hecke algebras arise when $\fg$ is $\fgl_n$ or $\fsl_n$ and 
$V$ is the first fundamental representation. In the case when $M$ is the trivial representation and
$\fg$ is $\fso_n$, the Jucys-Murphy elements $y_1, \dots, y_k$ in $\cB_k$ become the
Jucys-Murphy elements for the Brauer algebras used in \cite{Naz}; in the case that 
$\fg = \fsl_n$, these become the classical Jucys-Murphy elements in the group algebra of the symmetric group.

In defining the affine and degenerate affine BMW algebras, we must make a choice of infinite families of parameters,  $Z_0^{(\ell)}$ and $z_0^{(\ell)}$, respectively. In order to avoid choices which yield the zero algebra, we choose parameters in the ground ring $C = Z(U)$ which arise naturally in each of the action theorems below. As we will see in the proofs of Theorem \ref{degBMWaction} and Theorem \ref{BMWaction} (specifically, the calculations in \eqref{determining_z0} and \eqref{determining_Z0}),  the natural actions of $\cB_k$ and $CB_k$ on tensor space in Theorems \ref{degaction} and Theorem \ref{affaction} force the parameters to be 
$$z_0^{(\ell)} = \epsilon (\id\otimes \tr_V)((\half y + \gamma)^\ell)
\qquad\hbox{and}\qquad
Z_0^{(\ell)} = \epsilon (\id\otimes \qtr_V)((z\cR_{21}\cR)^\ell).$$

\paragraph{Preliminaries on classical type combinatorics.}

Let $V = \CC^r$. The Lie algebras $\fg=\fgl_r$ and $\fsl_r$ are given by
$$\fgl_r=\End(V)
\qquad\hbox{and}\qquad \fsl_r=\{ x\in \fgl_r\ |\ \tr(x)=0\},$$
with bracket $[x,y]=xy-yx$.   Then 
$$\fgl_r\quad\hbox{has basis}\quad \{ E_{ij}\ |\ 1\le i,j\le r\},$$
where $E_{ij}$ is the matrix with 1 in the $(i.j)$ entry and 0 elsewhere.  A Cartan subalgebra
of $\fgl_r$ is 
$$\fh_{\fgl} = \{ x\in \fgl_r\ |\ \hbox{$x$ is diagonal}\}
\qquad\hbox{with basis}\qquad \{ E_{11}, E_{22}, \ldots, E_{rr}\},$$
and the dual basis $\{\varepsilon_1,\ldots, \varepsilon_r\}$ of $\fh_{\fgl}^*$ is specified by
$$\varepsilon_i\colon \fh_\fgl\to \CC
\qquad\hbox{given by}\qquad
\varepsilon_i(E_{jj}) = \delta_{ij}.$$
The form 
\begin{equation}\label{glform}
\langle , \rangle\colon \fg\otimes \fg \to \CC
\qquad\hbox{given by}\qquad
\langle x,y\rangle = \tr_V(xy)
\end{equation}
is a nondegenerate $\ad$-invariant symmetric bilinear form on $\fg$ such that
the restriction to $\fh_\fgl$ is a nondegenerate form 
$\langle , \rangle\colon \fh_\fgl\otimes \fh_\fgl \to \CC$. Since $\langle , \rangle$ is nondegenerate,
the map $\nu\colon \fh_{\fgl}\to \fh_{\fgl}^*$ given by $\nu(h)= \langle h, \cdot\rangle$ is a 
vector space isomorphism which induces a nondegenerate form $\langle , \rangle$ on $\fh_{\fgl}^*$.  Further,
$$\{ E_{11}, \ldots, E_{rr}\} \quad \text{ and } \quad \{ \varepsilon_1,\ldots,\varepsilon_r\} \quad \text{ are orthonormal bases of $\fh_\fgl$ and $\fh^*_\fgl$,}$$
respectively.
%
A Cartan subalgebra of $\fsl_r$ is 
$$\fh_{\fsl} = (E_{11}+\cdots+E_{rr})^\perp = \{ x\in \fh_\fgl\ |\ \langle x, E_{11}+\cdots+E_{rr}\rangle=0\},$$
the orthogonal subspace to $\CC(E_{11}+\cdots+E_{rr})$.
The \emph{dominant integral weights for $\fgl_r$},
$$P^+ = \{ \lambda_1\varepsilon_1+\cdots+\lambda_r\varepsilon_r\ |\ 
\lambda_i\in \ZZ, \lambda_1\ge \cdots \ge \lambda_r\},$$
index the irreducible finite-dimensional representations $L(\lambda)$ of $\fgl_r$. The irreducible
finite-dimensional representations of $\fsl_r$ are
$$L(\bar\lambda) = \Res^{\fgl_r}_{\fsl_r}(L(\lambda)),
$$
where $\bar\lambda$ is the orthogonal projection of $\lambda$ to
$\fh_\fsl^* =(\varepsilon_1+\cdots+\varepsilon_r)^\perp$.

The matrix units $\{E_{ij}\ |\ 1\le i,j\le r\}$ form a basis of $\fgl_r$ for which the dual basis
with respect to the form in \eqref{glform} is $\{E_{ji}\ |\ 1\le i,j\le r\}$. So 
\begin{align}
\gamma^\fgl &= \sum_{1\le i,j\le r} E_{ij}\otimes E_{ji}
= \sum_{1\le i,j\le r\atop i\ne j} E_{ij}\otimes E_{ji} + \sum_{i=1} E_{ii}\otimes E_{ii}, 
\qquad\hbox{and} \label{glgamma} \\
\gamma^{\fsl} &= \gamma^{\fgl} - E_+\otimes E_+,
\qquad\hbox{where}\quad E_+=E_{11}+\cdots + E_{rr}. \label{slgamma}
\end{align}
If the Casimir for $\fgl_r$,
$$\kappa^{\fgl} = \sum_{1\le i,j\le r} E_{ij}E_{ji},
\quad\hbox{acts on}\quad L(\lambda)\quad\hbox{by the constant}\quad \kappa^{\fgl}{(\lambda)}
$$
then the Casimir for $\fsl_r$,
\begin{equation}\label{casconv}
\kappa^{\fsl} = \kappa^{\fgl} - E_+E_+,
\qquad\hbox{acts on}\quad
L(\bar\lambda)\quad\hbox{by the constant}\quad
\kappa^{\fgl}{(\lambda)}-\frac{1}{r}|\lambda|^2,
\end{equation}
where $|\lambda|=\lambda_1+\cdots+\lambda_r$.

Let $V = \CC^N$. The Lie algebras $\fg=\fso_{N}$ or $\fsp_{N}$ are given by
$$\fg = \{ x\in \fgl(V)\ |\ ( xv_1,v_2) + ( v_1, xv_2) = 0\ \ 
\hbox{for all $v_1,v_2\in V$} \},
$$
where $(,)\colon V\otimes V\to \CC$ is a nondegenerate bilinear form satisfying
\begin{equation}\label{cltype1}
( v_1, v_2) = \epsilon( v_2, v_1),
\qquad\hbox{where}\quad
\epsilon = \begin{cases}
1, &\hbox{if $\fg = \fso_{2r+1}$,} \\
-1, &\hbox{if $\fg = \fsp_{2r}$,} \\
1, &\hbox{if $\fg = \fso_{2r}$.}
\end{cases}
\end{equation}
Choose 
\begin{equation}\label{Vhat}\hbox{a basis $\{v_i\ |\ i\in \hat V\}$ of $V$},
\qquad\hbox{where}\qquad
\hat{V} = \begin{cases}
\{-r,\ldots, -1, 0, 1, \ldots, r\}, &\hbox{if $\fg = \fso_{2r+1}$,} \\
\{-r,\ldots, -1, 1, \ldots, r\}, &\hbox{if $\fg = \fsp_{2r}$,} \\
\{-r,\ldots, -1, 1, \ldots, r\}, &\hbox{if $\fg = \fso_{2r}$.} 
\end{cases}
\end{equation}
so that the matrix of the bilinear form $(,)\colon V\otimes V\to \CC$ is
$$J = \left(\begin{minipage}{1.7cm}\begin{tikzpicture}[scale=.25]
\node at (6.1,6.1) {\small $1$};
\node at (5.4,5.4) {$\cdot$};
\node at (5,5) {$\cdot$};
\node at (4.6,4.6) {$\cdot$};
\node at (4,4) {\small$1$};
\node at (3,3) {\small $\epsilon$};
\node at (2.4,2.4) {$\cdot$};
\node at (2,2) {$\cdot$};
\node at (1.6,1.6) {$\cdot$};
\node at (1,1) {\small$\epsilon$};
\node at (1.8,5.2) {$0$};
\node at (5.2,1.8) {$0$};
\end{tikzpicture}\end{minipage}\right),
\qquad\hbox{and}\qquad
\fg = \{ x\in \fgl_N\ |\ x^tJ +Jx=0\},
$$
where $N = \dim(V)$ and $x^t$ is the transpose of $x$.
Then, as in Molev \cite[(7.9)]{Mo}  and [Bou, Ch.\ 8 \S 13 2.I, 3.I, 4.I],
\begin{equation}\label{natspanningset}
\fg = \hbox{span}\{ F_{ij}\ |\ i,j\in \hat V\}
\quad\hbox{where}\quad F_{ij} = E_{ij} - \theta_{ij}E_{-j,-i},
\end{equation}
where $E_{ij}$ is the matrix with 1 in the $(i,j)$-entry and $0$ elsewhere and
$$
\theta_{ij} = \begin{cases}
1, &\hbox{if $\fg = \fso_{2r+1}$,} \\
\mathrm{sgn}(i) \cdot \mathrm{sgn}(j), &\hbox{if $\fg=\fsp_{2r}$,} \\
1, &\hbox{if $\fg=\fso_{2r}$.} 
\end{cases}
$$

A Cartan subalgebra of $\fg$ is
\begin{equation}\label{cartan}
\fh = \hbox{span}\{ F_{ii}\ |\ i\in \hat V\}
\qquad\hbox{with basis}\qquad
\{ F_{11}, F_{22}, \ldots, F_{rr} \}.
\end{equation}
The dual basis $\{\varepsilon_1,\ldots , \varepsilon_r\}$ of $\fh^*$ is specified by
\begin{equation}
\varepsilon_i\colon \fh\to \CC
\qquad\hbox{given by}\qquad
\varepsilon_i(F_{jj}) = \delta_{ij}.
\label{epsdefn}
\end{equation}
The form 
\begin{equation}\label{favform}
\langle , \rangle\colon \fg\otimes \fg \to \CC
\qquad\hbox{given by}\qquad
\langle x,y\rangle = \half \tr_V(xy)
\end{equation}
is a nondegenerate $\ad$-invariant symmetric bilinear form on $\fg$ such that
the restriction to $\fh$ is a nondegenerate form 
$\langle , \rangle\colon \fh\otimes \fh \to \CC$  on $\fh$. Since $\langle , \rangle$ is nondegenerate,
the map $\nu\colon \fh\to \fh^*$ given by $\nu(h)= \langle h, \cdot\rangle$ is a 
vector space isomorphism which induces a nondegenerate form $\langle , \rangle$ on $\fh^*$. Let $\langle , \rangle\colon \fh^*\otimes \fh^* \to \CC$ be the form on $\fh^*$ induced by the form on $\fh$ and the vector space isomorphism
$\nu\colon \fh \to \fh^*$ given by $\nu(h)= \langle h, \cdot\rangle$. Further,
$$
\{ F_{11}, \ldots, F_{rr}\} \quad\hbox{and}\quad \{ \varepsilon_1,\ldots,\varepsilon_r\}\ \quad \hbox{are orthonormal bases  of $\fh$ and $\fh^*$.}
$$

With $F_{ij}$ as in \eqref{natspanningset}, 
$\fg$ has basis 
$$\{ F_{i, i} ~|~ 0 < i \in \hat V \} \cup \{ F_{\pm i, \pm j} ~|~ 0 < i < j \in \hat V \} \cup \{ F_{0, \pm i} ~|~ 0 < i\in \hat V \}  \qquad \text{if } \fg = \fso_{2r+1},$$
$$\{ F_{i, i},  F_{-i, i}, F_{i, -i} ~|~ 0 < i \in \hat V \} \cup \{ F_{\pm i, \pm j} ~|~ 0 < i < j \in \hat V \} \qquad \text{if } \fg = \fsp_{2r},$$
$$\{ F_{i, i} ~|~ 0 < i \in \hat V \} \cup \{ F_{\pm i, \pm j} ~|~ 0 < i < j \in \hat V \}  \qquad \text{if } \fg = \fso_{2r}.$$
With respect to the nondegenerate $\ad$-invariant symmetric bilinear form 
$\langle , \rangle\colon \fg \otimes \fg \to \CC$ given in \eqref{favform}, 
$\langle x,y\rangle = \half\tr_V(xy)$,
the dual basis with respect to $\langle, \rangle$ is
$$F_{ij}^* =  F_{ji} \quad \text{ if } i \neq - j, \quad \text{ and }  \quad F_{i,-i}^* = \half F_{-i,i}.$$
The sets 
$$\{ F_{-i, -i} ~|~ 0 < i \in \hat V \} \cup \{ F_{\pm i, \pm j} ~|~ 0 < j < i \in \hat V \} \cup \{ F_{ \pm i, 0} ~|~ 0 < i\in \hat V \}  \qquad \text{if } \fg = \fso_{2r+1},$$
$$\{ F_{-i, -i},  F_{-i, i}, F_{i, -i} ~|~ 0 < i \in \hat V \} \cup \{ F_{\pm i, \pm j} ~|~ 0 < j<i \in \hat V \} \qquad \text{if } \fg = \fsp_{2r},$$
$$\{ F_{-i, -i} ~|~ 0 < i \in \hat V \} \cup \{ F_{\pm i, \pm j} ~|~ 0 < j < i \in \hat V \}  \qquad \text{if } \fg = \fso_{2r},$$
are alternate bases, and $F_{i, -i} = 0$ when $\fg=\fso_{2r+1}$ or $\fg=\fso_{2r}$.  So
\begin{equation}\label{sospgamma}
2 \gamma = \sum_{i,j \in \hat V} F_{ij}\otimes  F^*_{ji} + \sum_{i \in \hat V} F_{i,-i}\otimes  F^*_{i, -i} 
= \sum_{i,j \in \hat V} F_{ij}\otimes F_{ji}.
\end{equation}

%
To compute
the value $\half \langle\lambda, \lambda+2\rho\rangle$ in \eqref{gamma_value}
choose positive roots
\begin{equation}\label{posroots}
R^+ = \begin{cases}
\{ \varepsilon_i\pm \varepsilon_j\ |\ 1\le i<j\le r\} 
\cup \{\varepsilon_i\ |\ 1\le i\le r\}, 
&\hbox{for $\fg=\fso_{2r+1}$,} \\
\{ \varepsilon_i\pm \varepsilon_j\ |\ 1\le i<j\le r\}  
\cup \{2\varepsilon_i\ |\ 1\le i\le r\}, 
&\hbox{for $\fg=\fsp_{2r}$,} \\
\{ \varepsilon_i\pm \varepsilon_j\ |\ 1\le i<j\le r\},
&\hbox{for $\fg=\fso_{2r}$,}
\end{cases}
\end{equation}
Since
\begin{align*}
\sum_{1\le i< j \le r} (\varepsilon_i - \varepsilon_j)
&+ \sum_{1\le i<j\le r} (\varepsilon_i+\varepsilon_j)
+\sum_{i=1}^r \varepsilon_i
+\sum_{i=1}^r \varepsilon_i \\
&= \sum_{i=1}^r (r-2i+1)\varepsilon_i
+ \sum_{i=1}^r (r-1)\varepsilon_i
+\sum_{i=1}^r \varepsilon_i
+\sum_{i=1}^r \varepsilon_i,
\end{align*}
it follows that
\begin{equation}\label{explrho}
2\rho = \sum_{i=1}^r
(y-2i+1)\varepsilon_i,
\qquad\hbox{where}\quad
y = \langle \varepsilon_1, \varepsilon_1+2\rho\rangle = \begin{cases}
2r, &\hbox{if $\fg = \fso_{2r+1}$,} \\
2r+1, &\hbox{if $\fg = \fsp_{2r}$,} \\
2r-1, &\hbox{if $\fg = \fso_{2r}$,} 
\end{cases}
\end{equation}
is the value by which the Casimir $\kappa$ acts on $L(\varepsilon_1)$.
Set $q=e^{h/2}$. The quantum dimension of $V$ is
\begin{equation}
\dim_q(V) = \tr_V(e^{h\rho}) = \epsilon+[y],
\qquad\hbox{where}\quad
[y] = \frac{q^y -q^{-y}}{q-q^{-1}},
\end{equation}
since, with respect to a weight basis of $V$, the eigenvalues of the diagonal matrix $e^{h \rho}$ are $e^{\half h (y - 2i + 1)} = q^{(y - 2i + 1)}$.

Identify a weight $\lambda = \lambda_1\varepsilon_1+\cdots+\lambda_r\varepsilon_r$ with the configuration of boxes with 
$\lambda_i$ boxes in row $i$.   If $b$ is a box in position $(i,j)$ of $\lambda$ 
then the \emph{content} of $b$ is
\begin{equation}
c(b)=j-i = \hbox{the diagonal number of $b$},
\qquad\hbox{so that}\qquad
\begin{minipage}{60pt}\begin{tikzpicture}[scale=.5]
 \foreach \x in {1,...,3} {
 \draw (0,\x) -- (3,\x);
 }
  \draw (0,0) -- (1,0);
   \foreach \x in {0,1} {
 \draw (\x, 0) -- (\x,3);
 }  
  \foreach \x in {2,3} {
 \draw (\x, 1) -- (\x,3);
 }  
 \draw (0+.5, 0+.5) node{\scriptsize$-2$} ++(0,1) node{\scriptsize$-1$} ++(0,1) node{\scriptsize$0$};
 \draw (1+.5, 1+.5) node{\scriptsize$0$} ++(0,1) node{\scriptsize$1$};
 \draw (2+.5, 1+.5)  node{\scriptsize$1$}  ++(0,1) node{\scriptsize$2$};
\end{tikzpicture}\end{minipage}
\end{equation}
are the contents of the boxes of 
$\lambda = 
3\varepsilon_1+3\varepsilon_2+\varepsilon_3$.
If $\lambda=\lambda_1\varepsilon_1+\cdots\lambda_n\varepsilon_n$,
then
$$\langle \lambda,\lambda+2\rho\rangle 
-\langle \lambda-\varepsilon_i,\lambda-\varepsilon_i+2\rho\rangle 
=2\lambda_i+2\rho_i-1=y+2\lambda_i-2i=y+2c(\lambda/\lambda^-),$$
where $\lambda/\lambda^-$ is the box at the end of row $i$ in $\lambda$.
By induction,
\begin{equation}\label{cvaluecl}
\langle \lambda,\lambda+2\rho\rangle
=y|\lambda|+2\sum_{b\in \lambda} c(b),
\end{equation}
for $\lambda = \lambda_1\varepsilon_1+\cdots+\lambda_r\varepsilon_r$ with
$\lambda_i\in \ZZ$.

Let $L(\lambda)$ be the  irreducible highest weight $\fg$-module with highest weight 
$\lambda$, and let $V=L(\varepsilon_1)$.  Then,
for $\fg=\fso_{2r+1}$, $\fsp_{2r}$ or $\fso_{2r}$,  
\begin{equation}\label{decomp1}
V\cong V^*
\quad\hbox{and}\quad
V\otimes V \cong L(0) \oplus L(2\varepsilon_1) \oplus L(\varepsilon_1+\varepsilon_2).
\end{equation}
For each component in the decomposition of $V\otimes V$ the values by which $\gamma = \sum_{b\in B} b \otimes b^*$ acts (see \eqref{gamma_value}) are
\begin{align}
&\langle 0, 0+2\rho\rangle 
- \langle \varepsilon_1,\varepsilon_1+2\rho\rangle
- \langle \varepsilon_1,\varepsilon_1+2\rho\rangle =0-y-y = -2y, 
\nonumber \\
&\langle 2\varepsilon_1, 2\varepsilon_1+2\rho\rangle 
- \langle \varepsilon_1,\varepsilon_1+2\rho\rangle
- \langle \varepsilon_1,\varepsilon_1+2\rho\rangle =4+2(y-1)-y-y = 2, 
\label{tonVtimesV} \\
&\langle \varepsilon_1+\varepsilon_2, \varepsilon_1+\varepsilon_2+2\rho\rangle 
- \langle \varepsilon_1,\varepsilon_1+2\rho\rangle
- \langle \varepsilon_1,\varepsilon_1+2\rho\rangle =2+(y-1)+(y-3)-y-y = -2.
\nonumber
\end{align}
The second symmetric and exterior powers of $V$ are 
\begin{equation}\label{decomp2}
S^2(V) = \begin{cases}
L(2\varepsilon_1)\oplus L(0), &\hbox{if $\fg = \fso_{2r+1}$ or $\fso_{2r}$,} \\
L(2\varepsilon_1), &\hbox{if $\fg = \fsp_{2r}$,}
\end{cases}
\end{equation}
and
\begin{equation}\label{decomp3}
\Lambda^2(V) = \begin{cases}
L(\varepsilon_1+\varepsilon_2), &\hbox{if $\fg = \fso_{2r+1}$ or $\fso_{2r}$,} \\
L(\varepsilon_1+\varepsilon_2)\oplus L(0), &\hbox{if $\fg = \fsp_{2r}$.}
\end{cases}
\end{equation}
For all dominant integral weights $\lambda$ 
\begin{equation}\label{tensoreps1}
L(\lambda)\otimes V=
\begin{cases}
\displaystyle{
L(\lambda)}\bigoplus \left(\bigoplus_{\lambda^\pm} L(\lambda^{\pm})\right),
&\hbox{if $\fg=\fso_{2r+1}$ and  $\lambda_r>0$,} \\
\cr
\displaystyle{\bigoplus_{\lambda^\pm} L(\lambda^{\pm})},
&\hbox{if $\fg=\fsp_{2r}$, $\fg=\fso_{2r}$, or} \\
&\hbox{if $\fg=\fso_{2r+1}$ and $\lambda_r=0$,} 
\end{cases}
\end{equation}
where the 
sum over $\lambda^\pm$ denotes a sum over all
dominant weights obtained by adding or removing a box from $\lambda$ (by a routine check using the product formula for Weyl characters in \cite[VIII\ \S 9 Prop.\ 2]{Bou}).
If $\fg=\fso_{2r}$ then addition and removal of a box should include the possibility
of addition and removal of a box marked with a $-$ sign,
and removal of a box from row $r$ when $\lambda_r=\frac{1}{2}$ changes
$\lambda_r$ to $-\frac{1}{2}$.

\paragraph{Preliminaries on quantum trace.}
This paragraph provides a brief review of quantum traces and quantum dimensions
(see also \cite[4.2.9]{CP}) in the form suitable to our needs for the proofs of 
the main theorems of this section.
If $\fg$ is a finite-dimensional complex Lie algebra with a symmetric nondegenerate $\ad$-invariant bilinear form, and $U_h\fg$ is the Drinfel'd-Jimbo quantum group corresponding to $\fg$,  then both 
\begin{equation}\label{eq:v=1}
\hbox{$U= U\fg$ with $\cR=1\otimes 1$ and $v=1$}
\qquad\hbox{and}\qquad
\hbox{$U = U_h\fg$\quad with\quad $v = e^{-h\rho}u$}
\end{equation}
are ribbon Hopf algebras (\cite[Corollary (2.15)]{LR}).  
For $U = U\fg$ or $U_h \fg$, let $V$ be a finite-dimensional $U$-module, and let $V^*$ be the dual module.  Define
$$\begin{matrix}
\mathrm{ev}\colon &V^*\otimes V &\longrightarrow &\mathbf{1} \\
&\phi\otimes v &\longmapsto &\phi(v)
\end{matrix}
\qquad\hbox{and}\qquad
\begin{matrix}
\mathrm{coev}\colon  &\mathbf{1} &\longrightarrow &V\otimes V^* \\
&1 &\longmapsto &\sum_i v_i\otimes v^i
\end{matrix}
$$
where $\{v_1,\ldots, v_n\}$ is a basis of $V$ and $\{v^1,\ldots, v^n\}$ is the dual
basis in $V^*$.  
Let $E_V$ be the composition
$$
E_V\colon V\otimes V^* \xrightarrow{v^{-1}\otimes 1}
V\otimes V^* \xrightarrow{\check R_{VV^*}} V^*\otimes V \xrightarrow{\mathrm{ev}}
\mathbf{1} \xrightarrow{\mathrm{coev}} V\otimes V^*,
$$
so that $E_V$ is a $U$-module homomorphism with image a submodule of $V\otimes V^*$
isomorphic to the trivial representation of $U$.  

Let $M$ be a $U$-module and let $\psi\in \End_U(M\otimes V)$.
Then, as operators on $M\otimes V\otimes V^*$, 
\begin{equation}\label{qtraceandE}
(\id\otimes E_V)(\psi\otimes \id)(\id\otimes E_V)
= (\id\otimes \qtr_V)(\psi)\otimes E_V,
\end{equation}
where the \emph{quantum trace} $(\id\otimes \qtr_V)(\psi)\colon M\to M$ is  given by 
\begin{equation}\label{qtrdefn}
(\id\otimes \qtr_V)(\psi) = (\id\otimes \tr_V)\big((1\otimes uv^{-1})\psi\big).
\end{equation}
The special case when $M= \mathbf{1}$ and $\psi = \id_V$ is the \emph{quantum 
dimension} of $V$,
\begin{equation}\label{framinganom}\dim_q(V) = \qtr_V(\id_V), \qquad \text{ so that } \quad E_V^2 = \dim_q(V)E_V.
\end{equation}
If $C_V\colon V\to V$ is the map defined in \eqref{qcasimir}, then 
\begin{equation}\label{EVqtraction}
(\id\otimes \qtr_V)(\check R_{VV}) = C_V^{-1}
\end{equation}
(see, for example, \cite[Prop. 3.11]{LR}). 
In the case $U = U\fg$, the ribbon element $v = 1$, so that 
$\qtr_V (\varphi) = \tr_V(\varphi)$ and $C_V = \id_V$.

\begin{remark}
The identity \eqref{qtraceandE} and the second identity in \eqref{framinganom}
are the source of the connection between quantum traces, the Jones basic
construction and conditional expectations (see \cite[Def.\ 2.6.6]{GHJ}).  These
tools are extremely powerful for the study of Temperley-Lieb algebras,
Brauer algebras, BMW algebras, and other algebras which arise as tantalizer algebras
(tensor power centralizer algebras).
\end{remark}

\subsection{The degenerate affine BMW algebra action}
\label{sec:degBMW-action}

Define $e_i$ in the degenerate affine braid algebra $\cB_k$ by 
\begin{equation}\label{rel:e_defn} 
t_{s_i}y_i = y_{i+1}t_{s_i} -(1-e_i),
\qquad \hbox{for $i=1,2,\ldots, k-1$,}
\end{equation}
so that, with $\gamma_{i,i+1}$ as in \eqref{rel:graded_braid5},
\begin{equation}\label{eq:tt-e_defn}\gamma_{i,i+1}t_{s_i} = 1 - e_i.
\end{equation}
By definition, the algebra $\cB_k$ is an algebra over a commutative base ring $C$.
Fix constants
$$\epsilon=\pm1\qquad\hbox{and}\qquad
z_0^{(\ell)}\in C\quad\hbox{for $\ell\in \ZZ_{\ge0}$.}
$$
The \emph{degenerate affine Birman-\!Wenzl-Murakami (BMW) algebra}  $\cW_k$ 
(with parameters $\epsilon$ and $z_0^{(\ell)}$) is the quotient of the degenerate affine 
braid algebra $\cB_k$ by the relations
\begin{equation}\label{rel:untwisting} 
e_i t_{s_i} = t_{s_i} e_i = \epsilon e_i,
\qquad
e_i t_{s_{i-1}}e_i = e_i t_{s_{i+1}}e_i = \epsilon e_i,
\end{equation}
\begin{equation}\label{rel:unwrapping} 
e_1 y_1^{\ell} e_1 = z_0^{(\ell)} e_1,
\qquad
e_i(y_i + y_{i+1}) = 0 = (y_i + y_{i+1})e_i.
\end{equation}
The \emph{degenerate affine Hecke algebra} $\cH_k$ is the quotient of $\cW_k$ by the relations
\begin{equation}\label{rel:gradedhecke}
e_i=0, 
\qquad\hbox{for $i=1,\ldots, k-1$.}
\end{equation}

\begin{thm}\label{degBMWaction} Let $\Phi: \cB_k \to \End_\fg(M \otimes V^{\otimes k})$ be the representation defined in Theorem \ref{degaction}. 
\item{(a)}  Let $\fg$ be $\fso_{2r+1}$, $\fsp_{2r}$ or $\fso_{2r}$ and 
$\gamma = \sum_b b \otimes b^*$ as in \eqref{sospgamma}.  Use notations for
irreducible representations as in \eqref{tensoreps1}.
Let
$$y
= \begin{cases}
2r, &\hbox{if $\fg = \fso_{2r+1}$,} \\
2r+1, &\hbox{if $\fg = \fsp_{2r}$,} \\
2r-1, &\hbox{if $\fg = \fso_{2r}$,} 
\end{cases}
\qquad
\epsilon\, =
	\begin{cases}
		\!~~~1, &\hbox{if $\fg = \fso_{2r+1}$,} \\
		-1, &\hbox{if $\fg = \fsp_{2r}$,} \\
		\!~~~1, &\hbox{if $\fg = \fso_{2r}$,}
	\end{cases} \qquad
 V=L(\varepsilon_1),
$$
and let
$$
z_0^{(\ell)} = \epsilon(\id\otimes \tr_V)((\hbox{$\frac12$} y + \gamma)^\ell),
\qquad\hbox{
for $\ell\in \ZZ_{\ge 0}$.}
$$
Then
$\Phi\colon \cB_k\to \End_{U}(M\otimes V^{\otimes k})$
is a representation of the degenerate affine BMW algebra $\cW_k$.

\item[(b)]  
If $\fg= \fgl_r$, $\gamma  = \sum_b b \otimes b^*$ is as in \eqref{glgamma},
and $V = L(\varepsilon_1)$ then 
$\Phi\colon \cB_k\to \End_{U\fg}(M\otimes V^{\otimes k})$
is a representation of the degenerate affine Hecke algebra. If $\fg=\fsl_r$, $\gamma  = \sum_b b \otimes b^*$ is as in \eqref{slgamma}, and $V=L(\overline{\varepsilon_1})$ then $\Phi'\colon \cB_k\to \End_{U\fg}(M\otimes V^{\otimes k})$ given by
$$\Phi'(t_{s_i})=\Phi(t_{s_i}), \quad \Phi'(\gamma_{\ell,m})=\hbox{$\frac{1}{r}$}+\Phi(\gamma_{\ell,m}), \quad\hbox{and}\quad \Phi'(\kappa_i) = \Phi(\kappa_i) 
$$
extends to a representation of the degenerate affine Hecke algebra. 
\end{thm}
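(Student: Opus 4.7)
The plan is to first identify $\Phi(e_i)$ explicitly. By \eqref{eq:tt-e_defn}, $\Phi(e_i) = \id - \gamma\tau$ as an operator on the $i$-th and $(i+1)$-st tensor positions, where $\tau = \Phi(t_{s_i})$ is the swap and $\gamma = \sum_b b\otimes b^*$ is as in \eqref{sospgamma}. Using the decomposition $V\otimes V \cong L(0) \oplus L(2\varepsilon_1) \oplus L(\varepsilon_1+\varepsilon_2)$ from \eqref{decomp1}, the eigenvalues of $\gamma$ from \eqref{tonVtimesV}, and the parities of $\tau$ on each summand determined by \eqref{decomp2}--\eqref{decomp3} (so $\tau$ acts by $\epsilon$ on $L(0)$), a direct computation shows that $\gamma\tau$ acts by $+1$ on both $L(2\varepsilon_1)$ and $L(\varepsilon_1+\varepsilon_2)$ and by $-y\epsilon$ on $L(0)$. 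Hence
\[
\Phi(e_i) = (1+y\epsilon)\,P_i = \epsilon(\epsilon+y)\,P_i = \epsilon\,(\dim V)\,P_i = \epsilon\,E_i,
\]
where $P_i$ is the orthogonal projection of $V_i\otimes V_{i+1}$ onto its $L(0)$-summand and $E_i = (\dim V)P_i$ is the ``cap-cup'' operator $u\otimes v \mapsto (u,v)\sum_a v_a\otimes v^a$ determined by the invariant bilinear form on $V$. The relations \eqref{rel:untwisting} are now routine: $\tau = \epsilon$ on $L(0)$ gives $e_it_{s_i} = t_{s_i}e_i = \epsilon e_i$, and a short computation using $\sum_a (v_a,w)v^a = w$ (valid in both the orthogonal and symplectic case) yields $E_i\tau_{i+1,i+2}E_i = E_i$, from which $\Phi(e_i t_{s_{i+1}} e_i) = \epsilon^2 E_i = \epsilon\,\Phi(e_i)$; the case $e_i t_{s_{i-1}} e_i$ is analogous.

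For the unwrapping relations \eqref{rel:unwrapping}, use $2(y_i+y_{i+1}) = c_{i+1}-c_{i-1}$ from \eqref{cjconvert} to expand
\[
2(y_i+y_{i+1}) = \bigl(\kappa_i+\kappa_{i+1}+2\gamma_{i,i+1}\bigr) + 2\sum_{0\le \ell<i}\bigl(\gamma_{\ell,i}+\gamma_{\ell,i+1}\bigr).
\]
Under $\Phi$ the first grouping acts on positions $i,i+1$ as $\Delta(\kappa)$, and the $\ell$-th summand of the sum acts as $\sum_b b_\ell \otimes \Delta(b^*)_{i,i+1}$. Since $L(0)\subset V_i\otimes V_{i+1}$ is the trivial $\fg$-submodule, both $\Delta(\kappa)$ and each $\Delta(b^*)$ preserve it and act there by zero, so every term annihilates $P_i$, yielding $(y_i+y_{i+1})e_i = 0 = e_i(y_i+y_{i+1})$. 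For the key identity $e_1y_1^\ell e_1 = z_0^{(\ell)}e_1$, observe that on $M\otimes V$ the element $y_1 = \tfrac{1}{2}\kappa_1 + \gamma_{0,1}$ acts as $\tfrac{1}{2}y + \gamma$, since $\kappa$ acts on $V = L(\varepsilon_1)$ by the scalar $y$ from \eqref{explrho}. Applying \eqref{qtraceandE} in the classical limit $v=1$ (so $\qtr_V = \tr_V$) to the endomorphism $\psi = (\tfrac{1}{2}y+\gamma)^\ell$ of $M\otimes V_1$ gives $E_i\,y_1^\ell\,E_i = (\id_M\otimes\tr_V)\bigl((\tfrac{1}{2}y+\gamma)^\ell\bigr)\otimes E_i$, whence
\[
\Phi(e_1 y_1^\ell e_1) = \epsilon^2\,E_i\,y_1^\ell\,E_i = \epsilon\,(\id_M\otimes\tr_V)\bigl((\tfrac{1}{2}y+\gamma)^\ell\bigr)\,\Phi(e_1),
\]
identifying $z_0^{(\ell)}$ with the claimed central element of $Z(U\fg)$.

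Part (b) reduces to verifying $\Phi(e_i) = 0$ for $\fg = \fgl_r$ and $\Phi'(e_i) = 0$ for $\fg = \fsl_r$. For $\fgl_r$, \eqref{glgamma} gives $\gamma^{\fgl}(v_a\otimes v_b) = v_b\otimes v_a$, so $\gamma^{\fgl}$ coincides with the swap $\tau$ on $V\otimes V$; hence $\gamma^{\fgl}\tau = \tau^2 = \id$ and $\Phi(e_i) = 0$. For $\fsl_r$, the formula \eqref{slgamma} shows that $\gamma^{\fsl}$ differs from $\gamma^{\fgl}$ by a central correction coming from $E_+$, which acts as a scalar on $V = L(\overline{\varepsilon_1})$; the shift $\Phi'(\gamma_{\ell,m}) = \tfrac{1}{r} + \Phi(\gamma_{\ell,m})$ is calibrated precisely to reinstate the action of $\gamma^{\fgl}$, so that $\Phi'(\gamma_{i,i+1})$ acts as $\tau$ and $\Phi'(e_i) = 1 - \tau^2 = 0$.

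The main subtlety is the bookkeeping in the verification of $e_1y_1^\ell e_1 = z_0^{(\ell)}e_1$: one must correctly apply \eqref{qtraceandE} to the single endomorphism $\psi = y_1^\ell \in \End_\fg(M\otimes V_1)$ (rather than attempting an induction on $\ell$) and track the normalization $E_i = (\dim V)P_i$ against the scalar $\epsilon(\dim V)$ appearing in $\Phi(e_i) = \epsilon E_i$, so that the resulting partial trace identifies cleanly with $z_0^{(\ell)}$.
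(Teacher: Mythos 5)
Your proposal is correct and follows essentially the same route as the paper's proof: identify $\Phi(e_1)=\epsilon E_V$ from the eigenvalue table on $V\otimes V=L(0)\oplus L(2\varepsilon_1)\oplus L(\varepsilon_1+\varepsilon_2)$, obtain the relation $e_1y_1^\ell e_1=z_0^{(\ell)}e_1$ from the partial-trace identity \eqref{qtraceandE} applied to $\psi=(\half y+\gamma)^\ell$, and kill $(y_i+y_{i+1})e_i$ by expanding $y_i+y_{i+1}$ through the coproduct and using that $\fg$ acts by zero on the trivial summand. The only deviations are cosmetic: you verify $e_it_{s_{i\pm1}}e_i=\epsilon e_i$ by a direct cap--cup computation with the bilinear form where the paper invokes \eqref{EVqtraction}, and in part (b) you observe that $\gamma^{\fgl}$ coincides with the flip on $V\otimes V$ rather than reading the vanishing of $\Phi(e_1)$ off the eigenvalue table.
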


\begin{proof}
\smallskip\noindent 
(a)  
The action of $\gamma$ on the tensor product of two simple modules is given in \eqref{gamma_value}, so the computations in \eqref{tonVtimesV} determine the action of $\gamma$ on the components of $V\otimes V$.  The decompositions of the second symmetric and exterior powers in \eqref{decomp2} and \eqref{decomp3}
determine the action of $t_{s_1}$ on $V\otimes V$.
The operator $\Phi(e_1)$ is determined from $\Phi(t_{s_1})$ and $\Phi(\gamma)$ via \eqref{eq:tt-e_defn},
$$\Phi(\gamma)\Phi(t_{s_1}) = 1 - \Phi(e_1).$$  
In summary, $\Phi(t_{s_1})$, $\Phi(e_1)$, and $\Phi(\gamma)$ act on the components of $V \otimes V$  by
$$
\begin{array}{lcccl}
&L(0) &L(2\varepsilon_1) &L(\varepsilon_1+\varepsilon_2)\qquad  \\
\Phi(\gamma_{1,2}) &-y &1 &-1  \\
\Phi(t_{s_1}) &\epsilon &1 &-1  \\
\Phi(e_1) &1+\epsilon y &0 &0  
\end{array}
$$
where $y$ and $\gamma$ are as in \eqref{explrho} and \eqref{gamma_defn}, respectively.
The first relation in \eqref{rel:untwisting} follows.

Since $\dim(V) = \epsilon+y$,
the first identity in \eqref{framinganom}
gives that 
\begin{equation}\label{e=E}
\Phi(e_1)=  \epsilon E_V.
\end{equation}
By \eqref{eq:v=1}, \eqref{qtraceandE}, and \eqref{EVqtraction},  
\begin{align*}
\Phi(e_it_{s_{i-1}}e_i)
&=\epsilon(1\otimes E_V)(\check R_{VV}\otimes 1)(1\otimes E_V)\epsilon
=(\id\otimes\tr_V)(\check R_{VV})\otimes E_V \nonumber\\
&= C_V^{-1}\otimes E_V
=\id\otimes E_V
=\epsilon\Phi(e_i), 
\end{align*}
which establishes the second relation in \eqref{rel:untwisting}. 
Since $y=\langle \varepsilon_1,\varepsilon_1+2\rho\rangle = \Phi(\kappa_i)$, it follows
from \eqref{gensAtogensB} that 
$\Phi(y_1) = \Phi(\half \kappa_1 + \gamma_{0,1})
= \half y + \gamma$, 
and by \eqref{qtraceandE},
\begin{align}
\Phi(e_1y_1^\ell e_1) 
&= \epsilon(\id\otimes E_V)(\hbox{$\frac{1}{2}$}y +\gamma)^\ell\epsilon(\id\otimes E_V)
= (\id\otimes\tr_V)((\hbox{$\frac{1}{2}$}y +\gamma)^\ell)\otimes E_V \nonumber \\
&=\epsilon\, (\id\otimes\tr_V)((\hbox{$\frac{1}{2}$}y +\gamma)^\ell) \Phi(e_1) = z_0^{(\ell)}\Phi(e_1),
\label{determining_z0}
\end{align}
which gives the first relation in \eqref{rel:unwrapping}.
Since the $y_i$ commute and $t_{s_i} ( y_i + y_{i+1}) = (y_i + y_{i+1})t_{s_i}$, it follows that
$$
 e_i(y_i + y_{i+1}) = (t_{s_i} y_i - y_{i+1} t_{s_i} + 1)(y_i + y_{i+1}) 
		=(y_i + y_{i+1})(t_{s_i} y_i - y_{i+1} t_{s_i} + 1)\\
=(y_i+y_{i+1})e_i.
$$
For $b\in U\fg$ or $U\fg\otimes U\fg$, let
$b_i$ and $b_{i,i+1}$ denote the 
action of an element $b$ on the $i$th, respectively $i$th and $(i+1)$st,
factors of $V$ in $M\otimes V^{\otimes(i+1)}$.  Then, as operators on 
$M\otimes V^{\otimes(i+1)}$,
\begin{align*}
(y_i+y_{i+1})e_i
&= \left(\half\kappa_i + \sum_{r=0}^{i-1} \gamma_{r,i} 
+ \hbox{$\frac12$}\kappa_{i+1}
+\sum_{r=0}^i \gamma_{r, i+1}\right)e_i 
= \left(\half\Delta(\kappa)_{i,i+1} + \sum_{r=0}^{i-1} (\gamma_{r,i} + \gamma_{r, i+1})\right)e_i \\
&=\left(\half\Delta(\kappa)_{i,i+1} 
+ \sum_{r=0}^{i-1} \sum_{b} b \otimes \Delta(b^*)_{i,i+1}\right)
e_i 
=0, 
\end{align*}
because $e_i$ is a projection onto $L(0)$ and the action of $b^*$ and $\kappa$ on $L(0)$ is $0$.

\smallskip\noindent
(b)  
In the case where $\fg = \fgl_{r}$ and $V= L(\varepsilon_1)$, 
$$V \otimes V = L(2\varepsilon_1) \oplus L(\varepsilon_1 + \varepsilon_2),
\quad \hbox{ with }  
\Lambda^2(V) = L(\varepsilon_1+\varepsilon_2)
\quad\hbox{and}\quad
S^2(V) = L(2\varepsilon_1).
$$
So by \eqref{gamma_value},
\begin{equation}\label{glcase}
\begin{array}{lccl}
&L(2\varepsilon_1) &L(\varepsilon_1+\varepsilon_2)\qquad  \\
\Phi(\gamma_{1,2}) &1 &-1  \\
\Phi(t_{s_1}) &1 &-1  
\end{array}
\qquad\hbox{and}\qquad
\Phi(e_1) = \Phi(\gamma) - \Phi(t_{s_1})=0.
\end{equation}
In the case where $\fg = \fsl_r$ and $V=L(\bar\varepsilon_1)$,
$$V \otimes V = L(\overline{2\varepsilon_1}) \oplus L(\overline{\varepsilon_1 + \varepsilon_2}), \quad \text{ with }  
\Lambda^2(V) = L(\overline{\varepsilon_1+\varepsilon_2})\quad\hbox{and}\quad
S^2(V) = L(\overline{2\varepsilon_1}).$$
As the map $\phi: \cB_k \to \cB_k$ given by 
$$t_{s_i} \mapsto t_{s_i}, \qquad \gamma_{i,j} \mapsto \gamma_{i,j} - a, \qquad \kappa_i \mapsto \kappa_i, \qquad \text{ for fixed $a \in C$}$$
is an automorphism, the result follows from \eqref{glcase} and \eqref{casconv}.

\end{proof}

\begin{remark}  Fix $b_1,\ldots, b_r\in C$.  The \emph{degenerate cyclotomic BMW algebra} 
$\cW_{r,k}(b_1,\ldots, b_r)$ 
is the degenerate affine BMW algebra with the additional relation 
\begin{equation}\label{cycrelation}
(y_1-b_1)\cdots (y_1-b_r) = 0.
\end{equation}
The \emph{degenerate cyclotomic Hecke algebra} $\cH_{r,k}(b_1,\ldots, b_r)$ is the
degenerate affine Hecke algebra $\cH_k$ with the additional relation
\eqref{cycrelation}.
In Theorem \ref{degBMWaction}, if $\Phi(y_1)$ has eigenvalues
$u_1,\ldots, u_r$ then $\Phi$ is a representation of $\cW_{r,k}(u_1,\ldots, u_r)$ or $\cH_{r,k}(u_1,\ldots, u_r)$.
\end{remark}

\begin{remark}In general, for any constants $a_0$, $a$, and  $c$,  the map $\phi: \cB_k \to \cB_k$ given by 
$$t_{s_i} \mapsto t_{s_i}, \quad \gamma_{i,j} \mapsto \gamma_{i,j} - c, \quad  \kappa_0 \mapsto \kappa_0 - a_0, \quad \text{ and }  \quad \kappa_j \mapsto \kappa_j - a,\qquad \text{ for $j = 1, \dots, k,$}$$
is an automorphism. So, following the proof of Theorem \ref{degBMWaction}(b), $\Phi': \cB_k \to \End_{U\fg}(M \otimes V^{\otimes k})$ given by
$$\Phi'(t_{s_i})=\Phi(t_{s_i}), \quad \Phi'(\gamma_{\ell,m})=\hbox{$\frac{1}{r}$}+\Phi(\gamma_{\ell,m}),$$
$$\Phi'(\kappa_0) = a_0 + \Phi(\kappa_0), \quad\hbox{and}\quad \Phi'(\kappa_j) = a + \Phi(\kappa_j) \qquad \text{ for $j = 1, \dots, k$,}
$$
also extends to a representation of $\cH_k$ when $\fg = \fsl_r$. When $M = L(\mu)$ is a finite-dimensional highest weight module taking $a_0 = \frac{|\mu|}{r}$ and $a = \frac{1}{r}$ is combinatorially convenient.
\end{remark}

%
%
\subsection{The affine BMW algebra action}
\label{sec:affBMW-action}
Let $C$ be a commutative ring and let $CB_k$ be the group algebra of the affine
braid group.  Fix constants
$$q, z\in C
\qquad\hbox{and}\qquad
Z_0^{(\ell)}\in C,\quad\hbox{for $\ell\in \ZZ$,}
$$
with $q$ and $z$ invertible.  Let $Y_i = zX^{\varepsilon_i}$ so that
\begin{equation}
	\label{rel:Y}
		Y_1 = zX^{\varepsilon_1},\qquad
		Y_i = T_{i-1}Y_{i-1}T_{i-1},
	\qquad\hbox{and}\qquad
		Y_iY_j=Y_jY_i,\ \ \hbox{for $1\le i,j\le k$.}
\end{equation}
In the affine braid group
\begin{equation}
	\label{rel:Antisymmetry_T}
		T_i Y_iY_{i+1}  = Y_iY_{i+1}T_i.
\end{equation}
Assume $q-q^{-1}$ is invertible in $C$. Define $E_i \in CB_k$ by\begin{equation}\label{rel:E_Defn} 
T_i Y_i = Y_{i+1} T_i -(q-q^{-1})Y_{i+1}(1-E_i).
\end{equation}
The \emph{affine BMW algebra} $W_k$ 
is the quotient of the group algebra $CB_k$ by the relations
\begin{equation}\label{rel:Untwisting} 
E_i T^{\pm1}_i = T^{\pm1}_i E_i = z^{\mp1}E_i,
\qquad
E_i T_{i-1}^{\pm1}E_i = E_i T_{i+1}^{\pm1}E_i = z^{\pm1}E_i,
\end{equation}
\begin{equation}\label{rel:Unwrapping} 
E_1 Y_1^{\ell} E_1 = Z_0^{(\ell)} E_1,
\qquad
E_iY_iY_{i+1} = E_i = Y_iY_{i+1}E_i.
\end{equation}
Left multiplying \eqref{rel:E_Defn} by $Y_{i+1}^{-1}$ and using the second identity in 
\eqref{rel:Y}
shows that \eqref{rel:E_Defn} is equivalent to
$T_i - T_i^{-1} = (q - q^{-1})(1 - E_i)$. So 
\begin{equation}\label{Skein} 
E_i = 1-\frac{T_i-T_i^{-1}}{q-q^{-1}},
\qquad\hbox{and}\qquad
E_i^2 = \left(1 + \frac{z-z^{-1}}{q-q^{-1}}\right)E_i
\end{equation}
follows by multiplying the first equation in \eqref{Skein} by $E_i$ and using 
\eqref{rel:Untwisting}.

The \emph{affine Hecke algebra} $H_k$ is the affine BMW algebra $W_k$ 
with the additional relations
\begin{equation}\label{rel:hecke}
	E_i=0, 
	\qquad\hbox{for $i=1,\ldots, k-1$.}
\end{equation}


%


\begin{thm}\label{BMWaction}   Let $\Phi: CB_k \to \End_{U_h\fg}(M \otimes V^{\otimes k})$ be the representation defined in Theorem \ref{affaction}. 
\item[(a)]  Let $\fg$ be $\fso_{2r+1}$, $\fsp_{2r}$ or $\fso_{2r}$ and let
$\gamma  = \sum_b b \otimes b^*$ is as in \eqref{sospgamma}.
Let
$$y
= \begin{cases}
2r, &\hbox{if $\fg = \fso_{2r+1}$,} \\
2r+1, &\hbox{if $\fg = \fsp_{2r}$,} \\
2r-1, &\hbox{if $\fg = \fso_{2r}$,} 
\end{cases}
\qquad
\epsilon\, =
	\begin{cases}
		\!~~~1, &\hbox{if $\fg = \fso_{2r+1}$,} \\
		-1, &\hbox{if $\fg = \fsp_{2r}$,} \\
		\!~~~1, &\hbox{if $\fg = \fso_{2r}$,}
	\end{cases} \qquad
 V=L(\varepsilon_1),
$$
$z=\epsilon\, q^y$,
and
$$
Z_0^{(\ell)}=\epsilon\, (\id\otimes\qtr_V)\big((z\cR_{21}\cR)^{\ell}\big), \qquad \text{for $\ell \in \ZZ$.}
$$
 Then $\Phi: C B_k\rightarrow \End_U(M\otimes V^{\otimes k})$ is a representation of the affine BMW algebra $W_k$.  

\item[(b)]  
If $\fg= \fgl_r$, 
$\gamma  = \sum_b b \otimes b^*$ is as in \eqref{glgamma},
and $V = L(\varepsilon_1)$, then 
$\Phi\colon C B_k\to \End_U(M\otimes V^{\otimes k})$
is a representation of the affine Hecke algebra. 
If $\fg=\fsl_r$,
$\gamma  = \sum_b b \otimes b^*$ is as in \eqref{slgamma},
and $V=L(\overline{\varepsilon_1})$, then 
$$\Phi'\colon C B_k\to \End_{U}(M\otimes V^{\otimes k})
\quad\hbox{given by}\quad \Phi'(T_i)=q^{1/r}\Phi(T_i)\quad\hbox{and}\quad \Phi'(X^{\varepsilon_i})=\Phi(X^{\varepsilon_i}),$$
extends to a representation of the affine Hecke algebra.
\end{thm}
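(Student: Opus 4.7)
The proof plan parallels that of Theorem \ref{degBMWaction}: compute the action of $\Phi(T_i) = \check R_{VV}$ on the components of $V \otimes V$, identify $\Phi(E_i)$ with a multiple of $E_V$ acting in the $(i,i+1)$ factors, and then verify each defining relation of $W_k$ using the quantum trace identity \eqref{qtraceandE} in place of the Casimir/coproduct computations that drove the degenerate case.

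For part (a), I first determine the eigenvalues of $\check R_{VV}$ on the three summands $L(0), L(2\varepsilon_1), L(\varepsilon_1+\varepsilon_2)$ of $V \otimes V$. By \eqref{fulltwist} and \eqref{cvaluecl}, $\check R_{VV}\check R_{VV}$ acts on these summands by $q^{-2y}, q^2, q^{-2}$; the correct square root on each component is fixed by whether it sits in $S^2(V)$ or $\Lambda^2(V)$ per \eqref{decomp2}--\eqref{decomp3}, giving eigenvalues $\epsilon q^{-y} = z^{-1}$, $q$, and $-q^{-1}$ respectively. The skein formula \eqref{Skein} then forces $\Phi(E_i)$ to vanish on $L(2\varepsilon_1) \oplus L(\varepsilon_1+\varepsilon_2)$ and to act on $L(0)$ by $1-(z^{-1}-z)/(q-q^{-1}) = 1 + \epsilon[y] = \epsilon \dim_q(V)$. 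Since $E_V = \dim_q(V)\cdot\mathrm{proj}_{L(0)}$ under the identification $V \cong V^*$, this yields the quantum analog $\Phi(E_i) = \epsilon E_V$ of \eqref{e=E}. From here, the first relation in \eqref{rel:Untwisting} is immediate from the $T_i$-eigenvalue $z^{-1}$ on the image of $E_i$, while the second is a direct application of \eqref{qtraceandE} and \eqref{EVqtraction}:
\begin{align*}
\Phi(E_i T_{i\pm 1}^{\pm 1} E_i) &= \epsilon^2 (\id \otimes E_V)(\check R_{VV}^{\pm 1} \otimes \id)(\id \otimes E_V) \\
&= (\id \otimes \qtr_V)(\check R_{VV}^{\pm 1}) \otimes E_V = C_V^{\mp 1} \otimes E_V = q^{\pm y}\, E_V = z^{\pm 1}\Phi(E_i),
\end{align*}
using $C_V = q^{-y}\id_V$ from \eqref{qCasvalue} together with $z = \epsilon q^y$ and $\epsilon^2 = 1$.

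For the unwrapping relations \eqref{rel:Unwrapping}, the identity \eqref{XiasRmatrix} gives $\Phi(Y_1) = z\check R_{MV}\check R_{VM} = z\cR_{21}\cR$ acting on $M \otimes V$, and \eqref{qtraceandE} then produces
\[ \Phi(E_1 Y_1^\ell E_1) = \epsilon\, (\id \otimes \qtr_V)\big((z\cR_{21}\cR)^\ell\big)\, \Phi(E_1) = Z_0^{(\ell)}\,\Phi(E_1), \]
fixing the stated formula for $Z_0^{(\ell)}$. The relation $E_iY_iY_{i+1} = E_i = Y_iY_{i+1}E_i$ comes from the pictorial fact that $Y_iY_{i+1}$ represents the outer double braiding of the combined pair at positions $(i,i+1)$ around $M$ and the preceding strands; since $E_i$ projects the pair into the trivial summand $L(0) \subset V \otimes V$, and the trivial module has no nontrivial monodromy with any factor, the outer double braiding restricts to the identity on the image of $\Phi(E_i)$. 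Algebraically this is extracted from \eqref{qcasimir} applied to the $(V \otimes V)$ pair, using $C_{L(0)} = \id$ and the cancellation between $C_V \otimes C_V$ and the local factor $\check R_{VV}\check R_{VV}|_{L(0)} = z^{-2}\id$ already built into $Y_iY_{i+1}$.

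Part (b) follows from the same eigenvalue analysis: for $\fg = \fgl_r$ the decomposition $V \otimes V = L(2\varepsilon_1)\oplus L(\varepsilon_1+\varepsilon_2)$ has no trivial summand, so $\check R_{VV}$ satisfies the Hecke relation $\check R_{VV} - \check R_{VV}^{-1} = q - q^{-1}$ and hence $\Phi(E_i) = 0$ by \eqref{Skein}; for $\fsl_r$ the central contribution of $E_+$ as in \eqref{casconv} is absorbed into the scalar $q^{1/r}$, and the Hecke relation is preserved by the rescaling $\Phi'(T_i) = q^{1/r}\Phi(T_i)$. The main obstacle in the program is the careful verification of $Y_iY_{i+1}E_i = E_i$: the pictorial fusion-then-trivial-monodromy argument is transparent, but turning it into a clean algebraic identity requires combining the definition of $Y_iY_{i+1}$ through \eqref{BraidMurphy} with the ribbon structure \eqref{qcasimir} and the fact that $\Phi(E_i)$ lands in the trivial summand, where all the $\cR$-matrix factors reduce to identities.
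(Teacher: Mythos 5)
Your proposal is correct and follows essentially the same route as the paper's proof: the eigenvalue table for $\check R_{VV}$ on the three summands of $V\otimes V$, the identification $\Phi(E_i)=\epsilon E_V$, the quantum-trace identities \eqref{qtraceandE} and \eqref{EVqtraction} for the untwisting and unwrapping relations, and the vanishing of $\Phi(E_i)$ in type $A$ together with the rescaling automorphism for $\fsl_r$. The step you flag as the main obstacle, $E_iY_iY_{i+1}=E_i$, is handled in the paper by exactly the mechanism you sketch --- fusing the pair into the trivial summand and using the ribbon identity \eqref{qcasimir} to see that the double braiding of $L(0)$ with $M$ is trivial, the residual scalars cancelling against the $z^{2}$ built into $Y_iY_{i+1}$ (the paper phrases this pictorially as $\Phi(E_1Y_1Y_2T_1^{-1})=\Phi(E_1T_1^{-1})$) --- so your outline is sound as written.
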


\begin{proof}
(a) 	
By \eqref{fulltwist}, the computations in \eqref{tonVtimesV}
determine the action of $\check R^2_{VV}$ on the components of $V\otimes V$.  The operator
$\Phi(T_1) = \check R_{VV}$ is the square root of $\check R^2_{VV}$ and, at $q=1$ ,
specializes to $t_{s_1}$, the operator that switches the factors in $V\otimes V$.  Thus equations \eqref{decomp2} and \eqref{decomp3} determine the sign of $\Phi(T_1)$ on each component. The operator
$\Phi(E_1)$ is determined from $\Phi(T_1)$ via the first identity in  \eqref{Skein},
	$$\Phi(E_i) = 1 - \frac{\Phi(T_i)-\Phi(T_i^{-1})}{q-q^{-1}}.$$
Then 
$\check R_{VV}^2, \Phi(T_1)$ and $\Phi(E_1)$ act on the components of $V\otimes V$  by
$$
\begin{array}{cccc}
	&L(0)&L(2\varepsilon_1)&L(\varepsilon_1+\varepsilon_2)\\
	\check R_{VV}^2 & q^{-2y} & q^2 & q^{-2} \\
		\Phi(T_1)&\epsilon q^{-y}&q&-q^{-1} \\
	\Phi(E_1)& 1+\epsilon[y]&0&0
	\end{array}\qquad
\hbox{where}\quad [y] = \frac{q^y- q^{-y}}{q-q^{-1}}.
$$
The first relation in \eqref{rel:Untwisting} follows from
$$\Phi(E_1T_1)= \epsilon q^{-y} \Phi(E_1) = z^{-1}\Phi(E_1).$$
Since $\dim_q(V) = \epsilon+[y]$, \eqref{framinganom}
gives 
\begin{equation}\label{E=E}
\Phi(E_1)=\epsilon E_V.
\end{equation}
By \eqref{qtraceandE}, \eqref{EVqtraction}, \eqref{qCasvalue}, and \eqref{explrho},
\begin{align*}
	\Phi(E_iT_{i-1}E_i)
		&=\epsilon(1\otimes E_V)(\check R_{VV}\otimes 1)(1\otimes E_V)\epsilon
			=(\id\otimes\qtr_V)(\check R_{VV})\otimes E_V \\
		&= C_V^{-1}\otimes E_V
			= q^{\langle \varepsilon_1,\varepsilon_1+2\rho\rangle}(\id\otimes E_V)
			=q^y\epsilon\Phi(E_i) = z\Phi(E_i).
\end{align*}
This establishes the second relation in \eqref{rel:Untwisting}. 
By \eqref{qtraceandE},
\begin{align}
	\Phi(E_1Y_1^\ell E_1) 
		&= \epsilon(\id\otimes E_V)(z\cR_{21}\cR)^\ell\epsilon(\id\otimes E_V) 
		= (\id\otimes\qtr_V)((z\cR_{21}\cR)^\ell)\otimes E_V 
		\nonumber
		\\
		&=\epsilon\, (\id\otimes\qtr_V)((z\cR_{21}\cR)^\ell) \Phi(E_1) = Z_0^{(\ell)}\Phi(E_1),
		\label{determining_Z0}
\end{align}
which gives the first relation in \eqref{rel:Unwrapping}.
Since the $Y_i$ commute and $T_iY_iY_{i+1}=Y_iY_{i+1}T_i$,
$$
E_iY_iY_{i+1}
	=\left(1-\frac{T_i-T_i^{-1}}{q-q^{-1}}\right)Y_iY_{i+1}
	=Y_iY_{i+1}\left(1-\frac{T_i-T_i^{-1}}{q-q^{-1}}\right)
	=Y_iY_{i+1}E_i.
$$
The proof that $E_iY_iY_{i+1}=E_i$
is exactly as in the proof of \cite[Thm. 6.1(c)]{OR}:
Since $\Phi(E_1)=\epsilon E_V$, using $E_1T_1 = z^{-1}E_1$ and the pictorial equalities
$$
\epsilon\, z^2\cdot
\beginpicture
	\setcoordinatesystem units <.5cm,.5cm>         
	\setplotarea x from -2.5 to 1.5, y from -2 to 4    
		\plot -1.5 3.7 -1.5 1.12 /
		\plot -1.5 0.88 -1.5 -0.88 /
		\plot -1.5 -1.12 -1.5 -1.75 /
		\plot -1.25 3.7 -1.25 1.12 /
		\plot -1.25 0.88 -1.25 -0.88 /
		\plot -1.25 -1.12 -1.25 -1.75 /
		\ellipticalarc axes ratio 1:1 360 degrees from -1.5 3.7 center 
		at -1.375 3.7
		\put{$*$} at -1.375 3.7  
		\ellipticalarc axes ratio 1:1 180 degrees from -1.5 -1.75 center 
		at -1.375 -1.75 
		\plot  1 2   1 0.5 /
		\plot  1 -0.5   1 -1.5 /
	\setlinear
		\plot -0.3 1.5  -1.1 1.5 /
		\ellipticalarc axes ratio 2:1 180 degrees from -1.65 1.5  center 
		at -1.65 1.25 
		\plot -1.65 1  -0.3 1 /
		\plot -0.3 -0.5  -1.1 -0.5 /
		\ellipticalarc axes ratio 2:1 180 degrees from -1.65 -.5  center 
		at -1.65 -.75 
		\plot -1.65 -1  -0.3 -1 /
		\setquadratic
		\plot  -0.3 1  -0.05 0.8  -0 0.5 /
		\plot   0 0.5   0.15 0.2   0.7 0 /
		\plot   1 -0.5   0.9 -0.15   0.7 0 /
		\plot  -0.3 1.5  -0.05 1.7  -0 2 /
		\plot   -0.3 -0.5     -.1 -.425  -0.05 -0.325 /
		\plot   -0.05 -0.325   0.15 -0.1   0.35 -0.05 /
		\plot   0.65 0.15   0.9 0.25   1 0.5 /
		\plot  -0.3 -1  -0.05 -1.2  -0 -1.5 /
		\ellipticalarc axes ratio 1:1 180 degrees from 1 2 center 
		at 0.5 2
		\ellipticalarc axes ratio 1:1 180 degrees from 0 3.5 center 
		at 0.5 3.5
	\endpicture
= \epsilon\, z^2\cdot
	\beginpicture
	\setcoordinatesystem units <.5cm,.5cm>         
	\setplotarea x from -2 to 2.5, y from -2 to 4    
		\plot -1.5 3.7 -1.5 1 /
		\plot -1.25 3.7 -1.25 1 /
		\ellipticalarc axes ratio 1:1 360 degrees from -1.5 3.7 center 
			at -1.375 3.7
		\put{$*$} at -1.375 3.7  
		\ellipticalarc axes ratio 1:1 180 degrees from -1.5 -1.75 center 
			at -1.375 -1.75 
	\setquadratic
		\plot   0 2   0.15 1.7   0.7 1.5 /
		\plot   1 1   0.9 1.35   0.7 1.5 /
		\plot   0 1   0.1 1.3   0.35 1.45 /
		\plot   0.65 1.65   0.9 1.75   1 2 /
		\plot   -1.5 1   -1.15 0.55   0.215 0.295 /
		\plot   1.65 -0.2    1.25 0.2   0.215 0.295 /
		\plot   -1.25 1   -0.9 0.7   0.205 0.515 /
		\plot    1.9 -0.2    1.5 0.35   0.205 0.515 /
		\plot   0.65 0.65   0.9 0.75   1 1 /
		\plot   -0.35 0.65  -0.1 0.75   0 1 /
		\plot   -1 -0.2   -0.9 0.1   -0.65 0.25 /
		\plot   -0 -0.2    0.1 0.1    0.25 0.2 /
		\plot   0 -0.2   0.1 -0.6   0.5 -0.95 /
		\plot   1 -1.7   0.9 -1.3   0.5 -0.95 /
		\plot   -1 -0.2   -0.9 -0.6   -0.5 -0.95 /
		\plot   0 -1.7   -0.1 -1.3   -0.5 -0.95 /
		\plot  1.9 -0.2    1.65 -0.8    0.8 -1 /
		\plot  -1.25 -1.75  -1.05 -1.35  -0.45 -1.1 /   
		\plot  -0.35 -0.9    -0.15 -0.875   0.2  -0.85 /
		\plot  -0.2 -1.05    0.05 -1.05   0.4  -1.05 /
		\plot  1.65 -0.2    1.45 -0.7   0.55  -0.85 /
		\plot  -1.5 -1.75  -1.35 -1.35  -0.7 -1 /   
		\ellipticalarc axes ratio 1:1 180 degrees from 1 2 center 
			at 0.5 2
		\ellipticalarc axes ratio 1:1 180 degrees from 0 3.5 center 
			at 0.5 3.5
	\endpicture
= ~\epsilon\, z^2 z^{-1}\cdot
	\beginpicture
	\setcoordinatesystem units <.5cm,.5cm>         
	\setplotarea x from -2 to 2, y from -2 to 4    
		\plot -1.5 3.7 -1.5 1 /
		\plot -1.25 3.7 -1.25 1 /
		\ellipticalarc axes ratio 1:1 360 degrees from -1.5 3.7 center 
			at -1.375 3.7
		\put{$*$} at -1.375 3.7  
		\ellipticalarc axes ratio 1:1 180 degrees from -1.5 -1.75 center 
			at -1.375 -1.75 
		\plot  0 2   0 1 /
		\plot  1 2   1 1 /
	\setquadratic
		\plot   -1.5 1   -1.15 0.55   0.215 0.295 /
		\plot   1.65 -0.2    1.25 0.2   0.215 0.295 /
		\plot   -1.25 1   -0.9 0.7   0.205 0.515 /
		\plot    1.9 -0.2    1.5 0.35   0.205 0.515 /
		\plot   0.65 0.65   0.9 0.75   1 1 /
		\plot   -0.35 0.65  -0.1 0.75   0 1 /
		\plot   -1 -0.2   -0.9 0.1   -0.65 0.25 /
		\plot   -0 -0.2    0.1 0.1    0.25 0.2 /
		\plot   0 -0.2   0.1 -0.6   0.5 -0.95 /
		\plot   1 -1.7   0.9 -1.3   0.5 -0.95 /
		\plot   -1 -0.2   -0.9 -0.6   -0.5 -0.95 /
		\plot   0 -1.7   -0.1 -1.3   -0.5 -0.95 /
		\plot  1.9 -0.2    1.65 -0.8    0.8 -1 /
		\plot  -1.25 -1.75  -1.05 -1.35  -0.45 -1.1 /   
		\plot  -0.35 -0.9    -0.15 -0.875   0.2  -0.85 /
		\plot  -0.2 -1.05    0.05 -1.05   0.4  -1.05 /
		\plot  1.65 -0.2    1.45 -0.7   0.55  -0.85 /
		\plot  -1.5 -1.75  -1.35 -1.35  -0.7 -1 /   
		\ellipticalarc axes ratio 1:1 180 degrees from 1 2 center 
			at 0.5 2
		\ellipticalarc axes ratio 1:1 180 degrees from 0 3.5 center 
			at 0.5 3.5
	\endpicture
$$
it follows that 
$\Phi(E_1Y_1Y_2T_1^{-1}) 
	= \epsilon (1\otimes E_V)\Phi(zX^{\varepsilon_1})
	\Phi(zT_1X^{\varepsilon_1})$
acts as 
$\epsilon z^2 z^{-1}\cdot \check R_{L(0),M}
	\check R_{M,L(0)}(\id_M\otimes E_V)$.
By \eqref{qcasimir}, this is equal to
\begin{align*}
	\epsilon z (C_{M}\otimes C_{L(0)})
	C_{M\otimes L(0)}^{-1} (\id_M\otimes E_V)
	=\epsilon z\cdot C_MC_M^{-1}(\id_M\otimes E_V)
	=z \cdot \Phi(E_1) = \Phi(E_1T_1^{-1}),
\end{align*}
so that $\Phi(E_1Y_1Y_2T_1^{-1}) = \Phi(E_1T_1^{-1})$.  This
establishes the second relation in  \eqref{rel:Unwrapping}.

\smallskip\noindent
(b) 
In the case where $\fg = \fgl_{r}$ and $V= L(\varepsilon_1)$, 
$$V\otimes V=L(2\varepsilon_1)\oplus L(\varepsilon_1+\varepsilon_2)\quad \hbox{ with }  
S^2(V) = L(2\varepsilon_1)
\quad\hbox{and}\quad
\Lambda^2(V) = L(\varepsilon_1+\varepsilon_2).$$
So by \eqref{fulltwist},
\begin{equation}\label{qglcase}
		 \begin{array}{lccl}
			 &L(2\varepsilon_1) &L(\varepsilon_1+\varepsilon_2)  \\
			 \Phi(\check R^2_{VV}) &q^2 &q^{-2}  \\
			 \Phi(T_1) &q &-q^{-1}
		 \end{array}
\qquad\hbox{so that}\qquad
\Phi(E_1) = 1-\frac{\Phi(T_1)-\Phi(T_1)^{-1}}{q-q^{-1}}=0.
\end{equation}
In the case where $\fg = \fsl_r$ and $V=L(\bar\varepsilon_1)$,
$$V \otimes V = L(\overline{2\varepsilon_1}) \oplus L(\overline{\varepsilon_1 + \varepsilon_2}), \quad \text{ with }  
\Lambda^2(V) = L(\overline{\varepsilon_1+\varepsilon_2})\quad\hbox{and}\quad
S^2(V) = L(\overline{2\varepsilon_1}).$$
Since the map $\phi: B_k \to B_k$ given by 
$$T_{i} \mapsto aT_{i}, \qquad X^{\varepsilon_i} \mapsto X^{\varepsilon_i}, \qquad \text{ for invertible $a \in C$}$$
is an automorphism, the result then follows from \eqref{qglcase} and \eqref{casconv} (also see \cite[Prop.\ 4.4]{LR}).
\end{proof}

\begin{remark}  Fix $b_1,\ldots, b_r\in C$.  
The \emph{cyclotomic BMW algebra} $W_{r,k}(b_1,\ldots, b_r)$ 
is the affine BMW algebra $W_k$ with the additional relation 
\begin{equation}\label{cycrelationA}
	(Y_1-b_1)\cdots (Y_1-b_r) = 0.
\end{equation}
The \emph{cyclotomic Hecke algebra} $H_{r,k}(b_1,\ldots, b_r)$ is the affine Hecke algebra
$H_k$ with 
the additional relation \eqref{cycrelationA}.
 In Theorem \ref{BMWaction}, if $\Phi(Y_1)$ has eigenvalues
$u_1,\ldots, u_r$, then $\Phi$ is a representation of $W_{r,k}(u_1,\ldots, u_r)$ or $H_{r,k}(u_1,\ldots, u_r)$. 
\end{remark}

\section{Central element transfer via Schur-Weyl duality}\label{sec:Zs}
In Theorem \ref{degBMWaction} and Theorem \ref{BMWaction}, 
the parameters 
$$z_0^{(\ell)} = \epsilon(\id\otimes \tr_V)((\hbox{$\frac12$} y + \gamma)^\ell)
\qquad\hbox{and}\qquad
Z_0^{(\ell)} = \epsilon\, (\id\otimes\qtr_V)\big((z\cR_{21}\cR)^{\ell}\big)
$$
of the degenerate affine BMW algebra and 
affine BMW algebra, respectively, arise naturally from the action on tensor space.
It is a consequence of \cite[Prop. 1.2]{Dr} that these
are central elements of the enveloping algebra $U\fg$ and the quantum group $U_h\fg$, respectively:
$$z_0^{(\ell)}\in Z(U\fg)
\qquad\hbox{and}\qquad Z_0^{(\ell)}\in Z(U_h\fg).$$
 The Harish-Chandra isomorphism provides isomorphisms between the centers
 $Z(U\fg)$ or $Z(U_h\fg)$ and rings of symmetric functions. In this section
we show how to use the recursive formulas 
of \cite{Naz} and \cite{BB} for the central
elements $z_k^{(\ell)}$ and $Z_k^{(\ell)}$ in the degenerate affine and affine BMW algebras 
(formulas \eqref{grproduct} and \eqref{Z+product})
to determine the Harish-Chandra images of $z_0^{(\ell)}$ and $Z_0^{(\ell)}$.

\paragraph{Preliminaries on the Harish-Chandra isomorphisms.}
\label{sec:HC}
Let $\fg$ be a finite-dimensional complex Lie algebra with a symmetric nondegenerate $\ad$-invariant bilinear form.
The triangular decomposition $\fg = \fn^- \oplus \fh\oplus \fn^+$ (see \cite[VII \S 8 no.\ 3 Prop. 9]{Bou}) yields triangular decompositions of both the enveloping algebra $U= U\fg$ and the quantum group $U= U_h\fg$ in the form
$U = U^{-}U_{0}U^+$. If $U=U_h\fg$ then
$U_0 = \hbox{span}\{K^{\lambda^\vee}\ |\ \lambda^\vee\in \fh_\ZZ\}$ with 
$K^{\lambda^\vee}K^{\nu^\vee} = K^{\lambda^\vee+\nu^\vee}$, where
$\fh_\ZZ$ is a lattice in $\fh$.  Alternatively,
$$U_{0} = U\fh 
= \CC[h_1,\ldots, h_r]  \quad \text{if $U = U\fg$} 
\qquad \text{ and } \qquad 
U_{0} = \CC[L_1^{\pm1},\ldots, L_r^{\pm1}]  \quad \text{if $U = U_h\fg$},$$ 
where $h_1,\ldots, h_r$ is a basis of $\fh_\ZZ$, and $L_i = K^{h_i} = q^{h_i}$.

For $\mu\in \fh^*$, define the ring homomorphisms $\ev_\mu: U_{0} \to \CC$ by
\begin{equation}\label{sigandevdefn1}
\mathrm{ev}_\mu(h)= \langle \mu,h\rangle \qquad\hbox{and}\qquad
\mathrm{ev}_\mu(K^{\lambda^\vee})=q^{\langle \mu,\lambda^\vee\rangle}
\end{equation}
for $h\in \fh$ and $K^{\lambda^\vee}$ with $\lambda^\vee\in \fh_\ZZ$. For $\rho = \half \sum_{\alpha \in R^+} \alpha$ as in \eqref{Casvalue}, let $\sigma_\rho$ be the algebra automorphism given by
\begin{equation}
\sigma_\rho(h_i) = h +  \langle \rho, h_i\rangle \qquad \text{ and } \qquad 
\sigma_\rho(L_i) = q^{\langle \rho, h_i\rangle}L_i.
\end{equation} 

Define a vector space homomorphism by 
\begin{equation}\label{pi0defn1}
\pi_0\colon U \longrightarrow U_{0}
\qquad\hbox{by}\qquad
\pi_0 = \varepsilon^-\otimes \id\otimes \varepsilon^+\colon U_{-} \otimes U_{0} \otimes U_+\longrightarrow U_{0},
\end{equation}
where $\varepsilon^-\colon U_{-}\to \CC$ and $\varepsilon^+\colon U_+\to \CC$ are
the algebra homomorphisms determined by 
$$
\varepsilon^-(y) = 0
\qquad\hbox{and}\qquad
\varepsilon^+(x)=0,
\qquad\hbox{for $x\in \fn^+$ and $y\in \fn^-$, or}
$$
$$
\varepsilon^-(F_i) = 0
\qquad\hbox{and}\qquad
\varepsilon^+(E_i)=0,
\qquad\hbox{for $i=1,\ldots, n$.}
$$

The following important theorem says that both the center of $U\fg$ and the center of $U_h\fg$
are isomorphic to rings of {symmetric functions}. 

\begin{thm}[{Harish-Chandra/Chevalley isomorphism, \cite[VII \S 8 no.\ 5 Thm. 2]{Bou} and \cite[Thm. 9.1.6]{CP}}]
\label{HCIso}
Let $U = U\fg$ or $U_h\fg$, so that
$$U_{0} = \CC[h_1,\ldots, h_r] \ \text{ if $U = U\fg$} \qquad \text{ and } \qquad
U_{0} = \CC[L_1^{\pm1},\ldots, L_r^{\pm1}] \quad \text{ if $U = U_h\fg$}.$$  
Let $L(\mu)$ denote the irreducible $U$-module of highest weight $\mu$.
Then the restriction of 
$\pi_0$ to the center of $U$,
$$
\begin{matrix}
\pi_0\colon Z(U) &\longrightarrow &\sigma_\rho(U_{0}^{W_0}),\\
z &\longmapsto &\sigma_\rho(s)
\end{matrix}
\qquad\hbox{is an algebra isomorphism},
$$
where $s \in U_0^{W_0}$
is the symmetric function determined by
	$$z\quad \text{ acts on $L(\mu)$ by }\quad
	 \mathrm{ev}_\mu (\sigma_\rho(s)) = \mathrm{ev}_{\mu+\rho}(s),
	 \qquad\hbox{for $\mu\in \fh^*$.}$$
\end{thm}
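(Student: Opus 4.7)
The plan is to prove the theorem in three steps, treating $U = U\fg$ and $U = U_h\fg$ in parallel using the triangular decomposition $U \cong U_- \otimes U_0 \otimes U_+$ and the fact that the augmentation ideals $U_\pm^{>0}$ annihilate, respectively create, highest weight vectors.

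\textbf{Step 1 (Central characters via $\pi_0$).} For $z \in Z(U)$ and a highest weight vector $v_\mu^+$ of $L(\mu)$, I would write $z = \pi_0(z) + r$ with $r \in U_-^{>0}U_0U_+ + U_-U_0U_+^{>0}$. Terms with a $U_+^{>0}$ factor annihilate $v_\mu^+$, and the remaining terms (with a nontrivial $U_-^{>0}$ factor) send $v_\mu^+$ into strictly lower weight spaces. Since $z$ is central and $L(\mu)$ is cyclically generated by $v_\mu^+$, $z$ acts as a single scalar $\chi_\mu(z)$; isolating the $\mu$-weight component gives
$$\chi_\mu(z) = \ev_\mu(\pi_0(z)).$$
This shows $\pi_0\!\mid_{Z(U)}$ is an algebra homomorphism, and injective because a central element acting as zero on every $L(\mu)$ must vanish.

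\textbf{Step 2 (Weyl invariance after $\rho$-shift).} I would invoke the linkage principle: $L(\mu)$ and $L(w\cdot \mu)$ share a central character under the dot action $w\cdot \mu = w(\mu+\rho)-\rho$. The standard proof uses a simple reflection $s_\alpha$: for dominant integral $\mu$ with $m = \langle \mu+\rho,\alpha^\vee\rangle \in \ZZ_{>0}$, the vector $F_\alpha^m v_\mu^+$ (or its quantum analogue) generates a Verma submodule of $M(\mu)$ isomorphic to $M(s_\alpha\cdot \mu)$, forcing equality of central characters. Setting $s = \sigma_\rho^{-1}(\pi_0(z))$ and using $\ev_\mu \circ \sigma_\rho = \ev_{\mu+\rho}$ (immediate from the definition of $\sigma_\rho$), the identity $\chi_\mu(z) = \chi_{w\cdot\mu}(z)$ becomes $\ev_{\mu+\rho}(s) = \ev_{w(\mu+\rho)}(s)$ for all dominant integral $\mu$ and all $w \in W_0$. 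Zariski density of $\{\mu+\rho \mid \mu \in P^+\}$ in $\fh^*$ then forces $s \in U_0^{W_0}$, so $\pi_0(Z(U)) \subseteq \sigma_\rho(U_0^{W_0})$.

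\textbf{Step 3 (Surjectivity).} Equip $U$ with a suitable filtration so that $\pi_0$ is filtered. In the classical case, the associated graded of $\pi_0\!\mid_{Z(U\fg)}$ is the Chevalley restriction $S(\fg)^{\fg} \to S(\fh)^{W_0}$, which is a classical isomorphism, and surjectivity on graded pieces lifts back to the filtered level by a standard argument. In the quantum case one may either pass to the classical limit $q \to 1$ to compare graded dimensions, or explicitly construct central elements via quantum traces of the form $(\id \otimes \qtr_V)(\cR_{21}\cR)$ as $V$ runs over finite-dimensional irreducibles, following Reshetikhin-Takhtajan-Faddeev, and check that their $\pi_0$-images span. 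The main obstacle is this last step: Steps 1 and 2 are essentially formal once the triangular decomposition and basic Verma module machinery are available, whereas producing enough central elements to exhaust $\sigma_\rho(U_0^{W_0})$ -- particularly in the quantum case, where a direct analogue of Kostant's separation-of-variables theorem is not automatic -- is where the substantive technical work lies.
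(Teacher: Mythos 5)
The paper does not prove this theorem; it is quoted verbatim as a known result with references to Bourbaki [VII \S 8 no.\ 5 Thm.\ 2] and Chari--Pressley [Thm.\ 9.1.6], so there is no in-paper argument to compare against. Your sketch is the standard proof found in those sources and is essentially sound: Step 1 (central characters read off from $\pi_0$ via the triangular decomposition), Step 2 (Weyl invariance of the $\rho$-shifted image via Verma submodules $M(s_\alpha\circ\mu)\subseteq M(\mu)$ and Zariski density of $P^++\rho$), and Step 3 (surjectivity by passing to the associated graded and invoking the Chevalley restriction theorem $S(\fg)^{\fg}\xrightarrow{\sim}S(\fh)^{W_0}$ classically, or by producing central elements from quantum traces in the quantum case) is exactly the architecture of the cited proofs. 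Two elisions are worth flagging. First, your injectivity claim in Step 1 --- ``a central element acting as zero on every $L(\mu)$ must vanish'' --- is not formal: it rests on Harish-Chandra's separation theorem that the finite-dimensional representations of a semisimple Lie algebra (resp.\ of $U_h\fg$) separate points of the enveloping algebra, which should be cited rather than asserted. Second, in the quantum case the claim that the elements $(\id\otimes\qtr_{L(\nu)})(\cR_{21}\cR)$ have $\pi_0$-images spanning $\sigma_\rho(U_0^{W_0})$ is precisely Drinfeld's result, which this paper itself uses later as \cite[Prop.\ 1.2]{Dr} (the assertion that $\Psi_\ell$ in \eqref{Psielldefn} is a vector space isomorphism); invoking that reference closes your Step 3 in the quantum case. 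With those two citations supplied, your outline is a correct reconstruction of the standard proof.
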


\subsection{Central elements $z_V^{(\ell)}$}
\label{sec:Nazarov-to-HC}

Let $z_0^{(\ell)}$ and $\epsilon$ be the parameters of the degenerate affine BMW algebra
$\cW_k$.  Let $u$ be a variable and define $z_i^{(\ell)}\in \cW_k$ for $i=1,\ldots, k-1$ by
\begin{equation}\label{grproduct}
z_i(u) + \epsilon u- \half
=(z_0(u) + \epsilon u- \half)
\prod_{j=1}^{i} \frac{(u+y_j-1)(u+y_j+1)(u-y_j)^2}{(u+y_j)^2(u-y_j+1)(u-y_j-1)},
\end{equation}
where
$$z_i(u) = \sum_{\ell\in \ZZ_{\ge 0}} z_i^{(\ell)}u^{-\ell},
\qquad\hbox{for $i=0,1,\ldots, k-1$.}$$
The following proposition from \cite[Lemma 3.8]{Naz} is  proved also in 
\cite[Theorem 3.2 and Remark 3.4]{DRV}

\begin{prop}\label{ziprop}  In the degenerate affine BMW algebra $\cW_k$,
$$ e_{i+1}y_{i+1}^\ell e_{i+1}  = z_i^{(\ell)} e_{i+1},
\qquad\hbox{for $i=0,\ldots, k-1$ and $\ell\in \ZZ_{\ge 0}$.}
$$
\end{prop}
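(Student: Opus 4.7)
\medskip
\noindent\textbf{Proof plan.}
The plan is to proceed by induction on $i$. The base case $i=0$ is immediate: $e_1y_1^\ell e_1=z_0^{(\ell)}e_1$ is a defining relation in \eqref{rel:unwrapping}, and \eqref{grproduct} at $i=0$ is a tautology. For the inductive step, I would first convert the statement into generating-function form: since $\sum_{\ell\ge0}y^\ell u^{-\ell}=u(u-y)^{-1}$ as a formal series in $u^{-1}$, the claim at level $i$ is equivalent to
\begin{equation*}
e_{i+1}\,\frac{u}{u-y_{i+1}}\,e_{i+1}\;=\;z_i(u)\,e_{i+1}.
\end{equation*}

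Assuming the identity at level $i$, I would derive the analogue with $i+1$ in place of $i$ (so with $e_{i+2}$ and $y_{i+2}$). The key input is the identity
\begin{equation*}
y_{i+2}\;=\;t_{s_{i+1}}y_{i+1}t_{s_{i+1}}+t_{s_{i+1}}-\epsilon\, e_{i+1},
\end{equation*}
which follows from $\gamma_{i+1,i+2}=y_{i+2}-t_{s_{i+1}}y_{i+1}t_{s_{i+1}}$ in \eqref{rel:graded_braid5}, from $\gamma_{i+1,i+2}t_{s_{i+1}}=1-e_{i+1}$ in \eqref{eq:tt-e_defn}, from $t_{s_{i+1}}^2=1$ (cf.\ \eqref{easyrels}), and from $e_{i+1}t_{s_{i+1}}=\epsilon e_{i+1}$ in \eqref{rel:untwisting}. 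Substituting this expression into $(u-y_{i+2})^{-1}$, expanding formally, and sandwiching by $e_{i+2}$, every resulting term can be reduced using the untwisting relations $e_{i+2}t_{s_{i+1}}^{\pm1}e_{i+2}=\epsilon e_{i+2}$ and $e_{i+2}e_{i+1}e_{i+2}=\epsilon e_{i+2}$ from \eqref{rel:untwisting}, together with $(y_{i+1}+y_{i+2})e_{i+2}=0$ from \eqref{rel:unwrapping}, which is used to trade a trailing $y_{i+2}$ for $-y_{i+1}$. After these reductions the left-hand side takes the form $e_{i+2}\cdot r(u,y_{i+1})\cdot e_{i+1}\cdot s(u,y_{i+1})\cdot e_{i+2}$ for certain rational functions $r,s$; the inductive hypothesis then replaces the inner $e_{i+1}$-sandwich by a scalar rational function (in $u$, with coefficients in the commutative subalgebra generated by $y_1,\dots,y_i$) times $e_{i+1}$, and a final application of $e_{i+2}e_{i+1}e_{i+2}=\epsilon e_{i+2}$ collapses everything to a scalar times $e_{i+2}$.

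The main obstacle is the combinatorial bookkeeping required to match the resulting scalar with the specific rational factor $\tfrac{(u+y_{i+1}-1)(u+y_{i+1}+1)(u-y_{i+1})^2}{(u+y_{i+1})^2(u-y_{i+1}+1)(u-y_{i+1}-1)}$ appearing in \eqref{grproduct}. This is essentially the content of Nazarov's original computation in \cite[Lemma 3.8]{Naz}, and I would work formally in the ring of rational functions in $u$ with coefficients in the commutative subalgebra $C[y_1,\dots,y_{i+1}]$ to keep the algebra manageable: the poles at $u=\pm y_{i+1}$ come from the two appearances of $t_{s_{i+1}}y_{i+1}t_{s_{i+1}}$ and $-y_{i+1}$ (via $(y_{i+1}+y_{i+2})e_{i+2}=0$) in the expansion, while the shifts $\pm 1$ come from the linear correction $t_{s_{i+1}}-\epsilon e_{i+1}$. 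The additive normalization $\epsilon u-\tfrac12$ in \eqref{grproduct} is then fixed by matching the leading-order behaviour of both sides as $u\to\infty$.
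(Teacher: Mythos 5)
The paper itself gives no proof of Proposition \ref{ziprop}: it is quoted from Nazarov \cite[Lemma 3.8]{Naz} and from \cite[Theorem 3.2 and Remark 3.4]{DRV}, so the only meaningful comparison is with those arguments. Your outline is precisely their strategy --- induction on $i$ in generating-function form, the recursion $y_{i+2}=t_{s_{i+1}}y_{i+1}t_{s_{i+1}}+t_{s_{i+1}}-\epsilon e_{i+1}$ (which you derive correctly from \eqref{rel:e_defn} and \eqref{rel:untwisting}), and reduction of the sandwiched series by the untwisting and unwrapping relations. So the route is the right one and the same one. The difficulty is that, as written, the proposal is not yet a proof of anything: the entire content of the lemma is the identification of the scalar with the rational factor $\frac{(u+y_{i+1}-1)(u+y_{i+1}+1)(u-y_{i+1})^2}{(u+y_{i+1})^2(u-y_{i+1}+1)(u-y_{i+1}-1)}$ and the discovery that the shift $\epsilon u-\frac12$ makes the recursion multiplicative, and you defer exactly this step to ``Nazarov's original computation.'' Since the proposition \emph{is} Nazarov's lemma, that deferral is circular; the claim that the additive normalization is ``fixed by matching leading-order behaviour as $u\to\infty$'' is also not right, because at leading order both sides of \eqref{grproduct} agree trivially and the constant $-\frac12$ only appears at order $u^{0}$, where it is part of what must be computed.

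Two concrete index and constant slips should also be repaired before the computation is attempted. First, \eqref{rel:unwrapping} gives $e_j(y_j+y_{j+1})=(y_j+y_{j+1})e_j=0$, so the relation that trades $y_{i+2}$ for $-y_{i+1}$ is $e_{i+1}(y_{i+1}+y_{i+2})=0$, attached to $e_{i+1}$; the relation $(y_{i+1}+y_{i+2})e_{i+2}=0$ that you invoke is not among the defining relations and is false in the tensor-space representation. Second, $e_{i+2}e_{i+1}e_{i+2}=\epsilon e_{i+2}$ is not a relation in \eqref{rel:untwisting} (which only contains $e_i t_{s_{i\pm1}}^{\pm1}e_i=\epsilon e_i$); the correct derived identity is $e_{i+2}e_{i+1}e_{i+2}=e_{i+2}$, as one sees from $\Phi(e_j)=\epsilon E_V$ and the Temperley--Lieb relation for $E_V$, and the two differ when $\epsilon=-1$. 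Neither slip derails the strategy, but both would corrupt the bookkeeping that the proof ultimately consists of.
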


The following theorem uses the identity \eqref{grproduct} and the action of the 
degenerate affine BMW algebra on tensor space to provide a formula for
the Harish-Chandra images of the central elements 
$z_V^{(\ell)} = \epsilon \, (\id\otimes \tr_V)\left( \left(\half y + \gamma\right)^\ell\right)$
in the enveloping algebra $U\fg$ for orthogonal and symplectic Lie algebras $\fg$.
By Theorem \ref{degBMWaction} these particular central elements are natural parameters for
the degenerate affine BMW algebras.  The concept of the proof of Theorem \ref{z1action}
is, at its core, the same as the pattern taken by Nazarov for the proof of \cite[Theorem 3.9]{Naz}.

\begin{thm} 
\label{z1action}
Let $\fg = \fso_{2r+1}$, $\fsp_{2r}$ or $\fso_{2r}$, use notations for $\fh^*$ as in 
\eqref{cltype1}-\eqref{cvaluecl}
and let $h_1,\ldots, h_r$ be the basis of $\fh$ dual to the orthonormal basis
$\varepsilon_1, \ldots, \varepsilon_r$ of $\fh^*$ (so that $h_i=F_{ii}$, where
$F_{ii}$ is as in \eqref{cartan}).  With respect to the form $\langle,\rangle$
in \eqref{favform}, let $\gamma = \sum_b b \otimes b^*$ as in \eqref{gamma_defn}.
Let
$$y
= \begin{cases}
2r, &\hbox{if $\fg = \fso_{2r+1}$,} \\
2r+1, &\hbox{if $\fg = \fsp_{2r}$,} \\
2r-1, &\hbox{if $\fg = \fso_{2r}$,} 
\end{cases}
\qquad
\epsilon\, =
	\begin{cases}
		\!~~~1, &\hbox{if $\fg = \fso_{2r+1}$,} \\
		-1, &\hbox{if $\fg = \fsp_{2r}$,} \\
		\!~~~1, &\hbox{if $\fg = \fso_{2r}$,}
	\end{cases} \qquad
 V=L(\varepsilon_1),
$$
and let $z_V^{(\ell)}$
be the central elements in $U\fg$ defined by 
$$z_V^{(\ell)} = \epsilon \, (\id\otimes \tr_V)\left( \left(\half y + \gamma\right)^\ell\right),
\qquad\hbox{and write}\quad
z_V(u) = \sum_{i\in \ZZ_{\ge0}} z_V^{(\ell)}u^{-\ell}.$$
Then
$$\pi_0(z_V(u)+\epsilon u-\hbox{$\frac12$}) = 
(\epsilon\,u+\half)\frac{(u+\half y - r)}{(u-\half y + r)}
\sigma_{\rho}\left( \prod_{i =1}^r 
\frac{(u+h_i + \half)}
{ (u +h_i - \half)}
\frac{ (u -h_i + \half )}
{(u -h_i - \half)}\right),$$
where $\sigma_\rho$ is the 
algebra automorphism given by $\sigma_\rho(h_i) = h_i +\langle \rho,\varepsilon_i\rangle$ and 
$\pi_0$ is the isomorphism in Theorem \ref{HCIso}.
\end{thm}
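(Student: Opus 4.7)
The plan is to use Schur-Weyl duality (Theorem~\ref{degBMWaction}) to transfer the generating-function recursion \eqref{grproduct} together with Proposition~\ref{ziprop} into operator identities on $L(\mu) \otimes V^{\otimes 3}$, and then to read off the Harish-Chandra image of $z_V(u)$ from the Casimir eigenvalues on the branching \eqref{tensoreps1}. Fix a dominant integral weight $\mu$ and set $M = L(\mu)$. Since $z_V^{(\ell)} \in Z(U\fg)$ acts on $L(\mu)$ by a scalar $\zeta_\mu^{(\ell)}$, by Theorem~\ref{HCIso} it suffices to verify the claimed formula for the generating function $\zeta_\mu(u) = \sum_\ell \zeta_\mu^{(\ell)} u^{-\ell}$ after specializing $h_i \mapsto \mu_i$; write $l_i = \mu_i + \rho_i$.

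Working in $\End(M \otimes V_1 \otimes V_2 \otimes V_3)$ via $\Phi$, Proposition~\ref{ziprop} at $i = 1$ combined with the partial-trace identity \eqref{qtraceandE} (applied with $\Phi(e_2) = \epsilon\, \id_{M \otimes V_1} \otimes E_{V_2 V_3}$ and $\psi = \Phi(y_2^\ell)$) yields, as operators on $M \otimes V_1$,
\[ \Phi(z_1^{(\ell)}) \;=\; \epsilon\,(\id_{M \otimes V_1} \otimes \tr_{V_2})\bigl(\Phi(y_2^\ell)\bigr). \]
On the other hand, \eqref{grproduct} at $i = 1$ gives, as operators on $M \otimes V_1$,
\[ \Phi(z_1(u)) + \epsilon u - \half \;=\; (\zeta_\mu(u) + \epsilon u - \half)\, R(u,\Phi(y_1)), \quad R(u,t) = \frac{(u+t-1)(u+t+1)(u-t)^2}{(u+t)^2(u-t+1)(u-t-1)}. \]
By \eqref{tensoreps1} and \eqref{gamma_value}, $\Phi(y_1) = \half y + \gamma_{0,1}$ acts on $L(\nu) \subset L(\mu) \otimes V$ by the scalar $l_j + \half$, $-l_j + \half$, or $0$, according as $\nu = \mu + \varepsilon_j$, $\nu = \mu - \varepsilon_j$, or $\nu = \mu$ (the last only for $\fg = \fso_{2r+1}$). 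Applying the same eigenvalue analysis one level deeper, the partial trace on the left side acts on the $L(\nu)$-component by the scalar
\[ \epsilon \sum_{\sigma \subset L(\nu) \otimes V} \frac{u}{u - a_\sigma^\nu}\cdot \frac{\dim L(\sigma)}{\dim L(\nu)}, \]
where $a_\sigma^\nu$ is the analogous eigenvalue on $L(\sigma) \subset L(\nu) \otimes V$. Equating the two expressions for $\Phi(z_1(u))$ on the component $\nu = \mu + \varepsilon_j$ (so that $\Phi(y_1)$ is the nonzero scalar $l_j + \half$) and solving for $\zeta_\mu(u) + \epsilon u - \half$ expresses it as a rational function of $u, l_1, \ldots, l_r$; independence from the choice of $j$ then forces the product form claimed by the theorem.

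The main obstacle is the combinatorial simplification in this last step: the ratios $\dim L(\sigma)/\dim L(\nu)$ must be expanded via Weyl's dimension formula with the positive roots \eqref{posroots}, and the resulting sum of rational functions must telescope, after division by $R(u, l_j + \half)$, into $(\epsilon u + \half)\,\frac{u + \half y - r}{u - \half y + r}\,\prod_{i} \frac{(u + l_i + \half)(u - l_i + \half)}{(u + l_i - \half)(u - l_i - \half)}$. A useful consistency check is $\mu = 0$: then $l_i = \rho_i$, and the right-hand side telescopes explicitly to $(\epsilon u + \half)(u + \half y)/(u - \half y)$, matching the direct calculation $\zeta_0^{(\ell)} = \epsilon(\half y)^\ell(\epsilon + y)$ (since $L(0) \otimes V = L(\varepsilon_1)$ with $\half y + \gamma$-eigenvalue $\half y$ and $\dim V = \epsilon + y$). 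An alternative route bypasses \eqref{grproduct} altogether and computes $\zeta_\mu(u) = \epsilon \sum_\nu \frac{u \dim L(\nu)}{(u - a_\nu)\dim L(\mu)}$ directly from $(\id \otimes \tr_V)\bigl(u/(u - \Phi(y_1))\bigr)$, reducing the combinatorial step to a single layer of branching.
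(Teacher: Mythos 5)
Your setup is sound --- the transfer of Proposition~\ref{ziprop} and \eqref{grproduct} to operators via $\Phi$, the partial-trace identity \eqref{qtraceandE}, the eigenvalues $l_j+\half$, $-l_j+\half$, $0$ of $\Phi(y_1)$ on the components of $L(\mu)\otimes V$, and the $\mu=0$ check are all correct. But the core of the proof is missing. Having written $\Phi(z_1(u))|_{L(\nu)} = \epsilon\sum_\sigma \frac{u}{u-a^\nu_\sigma}\frac{\dim L(\sigma)}{\dim L(\nu)}$ and equated it with $(\zeta_\mu(u)+\epsilon u-\half)R(u,l_j+\half)$, you must still evaluate that sum of dimension ratios in closed form; this is precisely the Perelomov--Popov computation (Weyl dimension formula plus a nontrivial partial-fraction telescoping, cf.\ \cite[\S 7.1]{Mo}), i.e.\ it is essentially the whole content of the theorem, and you do not carry it out. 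The appeal to ``independence from the choice of $j$'' cannot substitute for it: the displayed equation determines $\zeta_\mu(u)$ from the left-hand side for any \emph{single} $j$, so independence in $j$ is an automatic consistency condition and yields no formula. The same objection applies to your ``alternative route,'' which is the dimension-ratio computation stated outright.

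The missing idea --- and the one the paper uses --- is to never introduce dimension ratios. One fix within your own framework: observe that $y_2$ restricted to $L(\nu)\otimes V_2$ (for $L(\nu)\subset M\otimes V_1$) acts exactly as $\half y+\gamma$ does on $L(\nu)\otimes V$, so by \eqref{determining_z0} the partial trace $\epsilon(\id\otimes\tr_{V_2})(y_2^\ell)$ restricted to the $L(\nu)$-component \emph{is} the scalar $\zeta_\nu^{(\ell)}$ by which the central element $z_V^{(\ell)}$ acts on $L(\nu)$. Then \eqref{grproduct} at $i=1$ becomes the clean one-box recursion $\zeta_\nu(u)+\epsilon u-\half=(\zeta_\mu(u)+\epsilon u-\half)\,R(u,\pm(\half y+c(b)))$ for $\nu=\mu\pm\varepsilon_j$, and induction from $\zeta_0(u)$ gives a product over the boxes of $\mu$ which telescopes row by row using only contents, as in \eqref{onerowatatime}. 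The paper implements the same cancellation globally: it iterates the recursion $k-1$ times on $L(0)\otimes V^{\otimes(k-1)}$, evaluates $\prod_i R(u,y_i)$ on an up-down tableau basis of $\cW_{k-1}^\mu$, notes that factors for a box added and later removed cancel in pairs so that \eqref{udtabform} reduces to \eqref{allboxesform}, and then performs the same row-by-row telescoping. Either way, the Weyl dimension formula never enters; without one of these reorganizations your argument has a genuine gap.
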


\begin{proof} 
In the definition of the action of the degenerate affine BMW algebra in 
Theorem \ref{degBMWaction},
$y_1$ acts on $M\otimes V$ as $\half y + \gamma$,
and
$$\hbox{$e_1y_1^\ell e_1$ acts on $M\otimes V^{\otimes 2}$ as 
$z_V^{(\ell)}e_1$.}
$$
Also
$$
\hbox{$e_1$ and $y_1$ in $\cW_2$ act on $M\otimes V^{\otimes 2}$ 
with $M=L(0)\otimes V^{\otimes (k-1)}$}
$$
in the same way that
$$\hbox{$e_k$ and $y_k$ in $\cW_{k+1}$ act on $M\otimes V^{\otimes(k+1)}$
with $M=L(0)$.}$$
By Proposition \ref{ziprop}, $z_{k-1}^{(\ell)}e_k = e_k y_k^\ell e_k$. Hence, as operators on
$L(0)\otimes V^{\otimes (k-1)}$,
\begin{align}
z_V(u)+\epsilon\,u&-\hbox{$\frac12$}
= z_{k-1}(u)+\epsilon\,u-\hbox{$\frac12$}
\label{opgrprod}
\end{align}
We will use \eqref{grproduct} to compute the action of this operator on the 
$L(\mu)\otimes \cW_{k-1}^\mu$ isotypic component in the
$U\fg\otimes \cW_{k-1}$-module decomposition
\begin{equation}\label{tensordecompA}
L(0)\otimes V^{\otimes (k-1)} \cong \bigoplus_\mu L(\mu)\otimes \cW_{k-1}^\mu.
\end{equation}

As an operator on $L(0)\otimes V$,
$$\gamma
=\hbox{$\frac12$}\big(\langle \varepsilon_1,\varepsilon_1+2\rho\rangle
-\langle \varepsilon_1,\varepsilon_1+2\rho\rangle
+\langle 0,0+2\rho\rangle\big)
=0 \quad \text{by \eqref{gamma_value},
}$$
and so 
$$z_0^{(\ell)} = \epsilon \, (\id \otimes \tr_V)((\half y + \gamma)^\ell) =  \epsilon \, (\id \otimes \tr_V)((\half y )^\ell)
 = \epsilon \, \dim(V)(\half y )^\ell. $$
Therefore, since $\dim(V) = \epsilon+y$,
$$z_0(u) 
= \sum_{\ell\in \ZZ_{\ge0}} z_V^{(\ell)}u^{-\ell}
= \sum_{\ell\in \ZZ_{\ge0}} \epsilon\, \dim(V)(\hbox{$\frac12$}y)^\ell u^{-\ell}
= \epsilon \, \dim(V) \frac{1}{1-\half yu^{-1}}
= \frac{1 + \epsilon\, y}{1-\half yu^{-1}}.
$$
Thus, as an operator on $L(0)\otimes V$,
\begin{align}
z_0(u) + \epsilon \, u - \half  &= \frac{1 + \epsilon \, y}{1 - \half y u^{-1}} + \epsilon u - \half  
=  \frac{(\epsilon\,u+\half)(u + \half y)}{u - \half y }.\label{L0action}
 \end{align}


By the first identity in \eqref{gensAtogensB} and the definition of $\Phi$ in Theorem \ref{degaction}, 
	$$y_k\in \cW_k
\quad\hbox{acts on}\quad L(0)\otimes V^{\otimes k}
=(L(0)\otimes V^{\otimes(k-1)})\otimes V
\quad\hbox{as}\quad
\half y + \gamma.$$
If $L(\mu)$ is an irreducible $U\fg$-module in $L(0) \otimes V^{\otimes (k-1)}$, 
then \eqref{gamma_value}, \eqref{explrho}, and \eqref{cvaluecl} give that $y_k$ acts on the 
$L(\lambda)$ component of 
$L(\mu)\otimes V$ by the constant $c(\lambda,\mu)=0$ when $\lambda=\mu$,
and by the constant
\begin{align}
c(\lambda,\mu) 
&=
\half y 
+ 
\half(\langle \mu\pm \varepsilon_i,\mu\pm \varepsilon_i+2\rho\rangle 
-\langle \mu,\mu+2\rho\rangle 
-\langle \varepsilon_1, \varepsilon_1+2\rho\rangle) 
\nonumber \\
&= \begin{cases}
\half y + c(\lambda/\mu), &\hbox{if $\mu\subseteq \lambda$,} \\
-\half y - c(\mu/\lambda), &\hbox{if $\mu\supseteq \lambda$,} 
\end{cases}
\qquad\hbox{where $\lambda = \mu\pm \varepsilon_i$.}
\label{onebox}
\end{align}
As in \cite[Theorem 2.6]{Naz}, the irreducible $\cW_k$-module  $\cW_k^{\mu/0}=\cW_k^\mu$
has a basis $\{v_T\}$ indexed by \emph{up-down tableaux} $T= (T^{(0)},T^{(1)},
\cdots, T^{(k)})$, where $T^{(0)} = \emptyset$, $T^{(k)} = \mu$, and 
$T^{(i)}$ is a partition obtained from $T^{(i-1)}$ by adding or removing a box
(or, in some cases when $\fg=\fso_{2r+1}$ leaving the partition the same; see
\eqref{tensoreps1})
and
$$
y_iv_T = \begin{cases}
(\half y + c(b)) v_T, &\hbox{if $b = T^{(i)}/T^{(i-1)}$}, \\
(-\half y - c(b)) v_T, &\hbox{if $b = T^{(i-1)}/T^{(i)}$}, \\
0, &\hbox{if $T^{(i-1)}=T^{(i)}$.}
\end{cases}
$$

\noindent Thus the product on the right hand side of \eqref{grproduct}
$$\prod_{i=1}^{k-1} \frac{(u+y_i-1)(u+y_i+1)(u-y_i)^2}{(u+y_i)^2(u-y_i+1)(u-y_i-1)}
\qquad\hbox{acts on $L(\mu)\otimes \cW_{k-1}^\mu$ in 
\eqref{tensordecompA}}
$$
by
\begin{equation}
\label{udtabform}
\prod_{i=1}^{k-1} 
\frac{(u+c(T^{(i)},T^{i-1})-1)(u+c(T^{(i)},T^{(i-1)})+1)(u-c(T^{(i)},T^{(i-1)}))^2}
{ (u + c(T^{(i)},T^{i-1}))^2( u - c(T^{(i)},T^{i-1})+1)  (u-c(T^{(i)},T^{i-1})-1)}
\end{equation}
for any up-down tableau $T$ of length $k$ and shape $\mu$.  If a box is added (or removed) at step $i$ and then removed (or added) at step $j$, then the $i$ and $j$ factors of this product cancel.
Therefore
\eqref{udtabform} is equal to
\begin{equation}\label{allboxesform}
\prod_{b\in \mu} 
\frac{(u+\half y + c(b) -1)(u+\half y + c(b) +1)(u-\half y - c(b) )^2}
{ (u + \half y + c(b) )^2( u - \half y - c(b) +1)  (u-\half y - c(b) -1)}
\end{equation}
(see  \cite[Lemma 3.8]{Naz}).
If $\mu = (\mu_1, \dots , \mu_r)$, simplifying one row at a time,
\begin{align}
\prod_{b\in \mu} 
\frac{(u+\half y + c(b) -1)(u+\half y + c(b) +1)}
{ (u + \half y + c(b) )^2} &= 
\prod_{i =1}^r 
\frac{(u+\half y - i)(u+\half y +\mu_i - i+1)}
{ (u + \half y + 1-i )(u + \half y + \mu_i -i )} \notag \\
&= \frac{u+\half y - r}{u + \half y}\prod_{i =1}^r 
\frac{(u+\half y +\mu_i - i+1)}
{ (u + \half y + \mu_i -i )}, \label{onerowatatime} 
\end{align}
 (see the example following this proof). It follows that \eqref{allboxesform} is equal to 
\begin{align}
 \frac{(u+\half y - r)}{(u + \half y)} &\frac{(u - \half y )}{(u-\half y + r)}
 \prod_{i =1}^r 
\frac{(u+\half y +\mu_i - i+1)}
{ (u + \half y + \mu_i -i )}
\frac{ (u - \half y -( \mu_i -i) )}
{(u-\half y -(\mu_i - i+1))}\notag
\\
&=  \ \frac{(u+\half y - r)}{(u + \half y)} \frac{(u - \half y)}{(u-\half y + r)}
\mathrm{ev}_{\mu+\rho}\left( \prod_{i =1}^r 
\frac{(u+h_i + \half)}
{ (u +h_i - \half)}
\frac{ (u -h_i + \half )}
{(u -h_i - \half)}\right),\label{rightrowbyrowfact}
\end{align}
since 
$
\mathrm{ev}_{\mu+\rho}(h_i)
= \mu_i+\rho_i
= \mu_i + \half (y-2i+1)
= \half y + \half + \mu_i-i.
$

Combining \eqref{L0action} and \eqref{rightrowbyrowfact},
the identity  \eqref{opgrprod} gives that, as operators on
$L(\mu)\otimes \cW_{k-1}^\mu$ in \eqref{tensordecompA},
$$
z_V(u)+\epsilon\,u-\hbox{$\frac12$} = 
(\epsilon\,u+\half)\frac{(u+\half y - r)}{(u-\half y + r)}
\mathrm{ev}_{\mu+\rho}\left( \prod_{i =1}^r 
\frac{(u+h_i + \half)}
{ (u +h_i - \half)}
\frac{ (u -h_i + \half )}
{(u -h_i - \half)}\right).$$
By Theorem \ref{HCIso}, the desired result follows. 
\end{proof}
 
 \begin{example}
 To help illuminate the cancellation done in \eqref{onerowatatime}, let
 $\mu=(5,5,3,3,1,1)$, where the contents of boxes are 
\begin{equation}
\beginpicture
\setcoordinatesystem units <0.5cm,0.5cm>         
\setplotarea x from 0 to 4, y from -3 to 3    
\linethickness=0.5pt                          
\putrule from 0 3 to 5 3          %
\putrule from 0 2 to 5 2          
\putrule from 0 1 to 5 1          %
\putrule from 0 0 to 3 0          %
\putrule from 0 -1 to 3 -1          %
\putrule from 0 -2 to 1 -2          %
\putrule from 0 -3 to 1 -3          %
\putrule from 0 -3 to 0 3        %
\putrule from 1 -3 to 1 3        %
\putrule from 2 -1 to 2 3        %
\putrule from 3 -1 to 3 3        
\putrule from 4 1 to 4 3        %
\putrule from 5 1 to 5 3        %
\put{$\scriptstyle{0}$} at .5 2.5 
\put{$\scriptstyle{1}$} at 1.5 2.5 
\put{$\scriptstyle{2}$} at 2.5 2.5 
\put{$\scriptstyle{3}$} at 3.5 2.5 
\put{$\scriptstyle{4}$} at 4.5 2.5 
\put{$\scriptstyle{-1}$} at .5 1.5 
\put{$\scriptstyle{0}$} at 1.5 1.5 
\put{$\scriptstyle{1}$} at 2.5 1.5 
\put{$\scriptstyle{2}$} at 3.5 1.5 
\put{$\scriptstyle{3}$} at 4.5 1.5 
\put{$\scriptstyle{-2}$} at .5 .5 
\put{$\scriptstyle{-1}$} at 1.5 .5 
\put{$\scriptstyle{0}$} at 2.5 .5 
\put{$\scriptstyle{-3}$} at .5 -.5 
\put{$\scriptstyle{-2}$} at 1.5 -.5 
\put{$\scriptstyle{-1}$} at 2.5 -.5 
\put{$\scriptstyle{-4}$} at .5 -1.5 
\put{$\scriptstyle{-5}$} at .5 -2.5 
\endpicture.
\end{equation}
In this example, the product over the boxes in the first row of the diagram is
\begin{align*}
\prod_{b\mathrm{\ in\ first\ row\ of\ }\mu}&
\frac{(x+c(b)-1)(x+c(b)+1)}{(x+c(b))(x+c(b))} \\
&=
\frac{(x-1)(x+1)}{(x+0)(x+0)}
\frac{(x+0)(x+2)}{(x+1)(x +1)}
\frac{(x+1)(x+3)}{(x+2)(x+2)}
\frac{(x+2)(x+4)}{(x+3)(x+3)}
\frac{(x+3)(x+5)}{(x+4)(x+4)} \\
&=
\frac{(x-1)}{(x+0)}
\frac{(x+5)}{(x+4)},
\qquad\hbox{where $x = u+\frac12 y$.}
\end{align*}
Thus, simplifying the product one row at a time, 
\begin{align*}
\prod_{b\in\mu}&
\frac{(x+c(b)-1)(x+c(b)+1)}{(x+c(b))(x+c(b))} \\
&=
\frac{(x-1)(x+5)}{(x+0)(x+4)}
\frac{(x-2)(x+4)}{(x-1)(x+3)}
\frac{(x-3)(x+1)}{(x-2)(x+0)}
\frac{(x-4)(x+0)}{(x-3)(x-1)}
\frac{(x-5)(x-3)}{(x-4)(x-4)}
\frac{(x-6)(x-4)}{(x-5)(x-5)}\\
&=
\frac{(x-6)}{(x+0)}
\frac{(x+5)}{(x+4)}
\cdot
\frac{(x+4)}{(x+3)}
\cdot
\frac{(x+1)}{(x+0)}
\cdot
\frac{(x+0)}{(x-1)}
\cdot
\frac{(x-3)}{(x-4)}
\cdot
\frac{(x-4)}{(x-5)} \\
\end{align*}
leads to the identity
\begin{align*}
\prod_{b\in\mu}
\frac{(x+c(b)-1)(x+c(b)+1)}{(x+c(b))(x+c(b))} 
= \frac{x-r}{x+0} \prod_{i=1}^r \frac{x+\mu_i-i+1}{x+\mu_i-i},
\qquad \text{where $\mu = (\mu_1,\ldots, \mu_r)$.}
\end{align*}
\end{example}


\subsection{Central elements $Z_V^{(\ell)}$}
\label{sec:BB-to-HC}

Let $Z_0^{(\ell)}$, $z$ and $q$ be the parameters of the affine BMW algebra $W_k$. Let $u$ be a variable and define $Z_i^{(\ell)}, Z_i^{(-\ell)} \in W_k$ for $i = 1, \dots, k-1$ by 
\begin{align}
Z^+_i(u) + &\frac{z^{-1}}{q - q^{-1}} - \frac{u^2}{u^2 - 1}  \nonumber \\
&= \left(Z_0^+ +  \frac{z^{-1}}{q - q^{-1}} - \frac{u^2}{u^2 - 1}\right)
\prod_{j=1}^{i}\frac{(u-Y_j)^2(u-q^{-2}Y_j^{-1})(u-q^2 Y_j^{-1})}
{(u-Y_j^{-1})^2(u - q^2 Y_j)(u-q^{-2}Y_j)}
, \label{Z+product}\\
Z^-_i(u) - &\frac{z}{q - q^{-1}} - \frac{u^2}{u^2 - 1} \nonumber \\
&= \left(Z_0^- -  \frac{z}{q - q^{-1}} - \frac{u^2}{u^2 - 1}\right)
\prod_{j=1}^{i}\frac{(u-Y_j^{-1})^2(u - q^2 Y_j)(u-q^{-2}Y_j)}
{(u-Y_j)^2(u-q^{-2}Y_j^{-1})(u-q^2 Y_j^{-1})} \label{Z-product}
\end{align}
where
$$Z_i^+(u) = \sum_{\ell \in \ZZ_{\geq 0}} Z_i^{(\ell)} u^{-\ell}
\quad \text{ and } \quad
Z_i^-(u) = \sum_{\ell \in \ZZ_{\geq 0}} Z_i^{(-\ell)} u^{-\ell} \quad \qquad \text{ for } i=0,\dots, k-1.$$ 
The following proposition is equivalent to  \cite[Lemma 7.4]{BB} and is also proved in \cite[Theorem 3.6 and Remark 3.8]{DRV}.

\begin{prop}  \label{Ziprop}
In the affine BMW algebra $W_k$,
$$E_{i+1}Y_i^\ell E_{i+1} = Z_i^{(\ell)}E_{i+1},
\qquad\hbox{for $i=0,1,\ldots, k-2$ and $\ell\in \ZZ$.}
$$
\end{prop}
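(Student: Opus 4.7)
The plan is to prove this by induction on $i$, in parallel with the Nazarov-style proof of the degenerate analog (Proposition \ref{ziprop}). Note that $Z_i^{(\ell)}$, as defined by \eqref{Z+product} and \eqref{Z-product}, lies in the commutative subalgebra generated by $Y_1,\ldots,Y_i$, and this subalgebra commutes with $E_{i+1}$, so the claimed identity makes sense. The base case $i=0$ reduces, for $\ell\ge 0$, to the defining relation $E_1 Y_1^\ell E_1 = Z_0^{(\ell)} E_1$ from \eqref{rel:Unwrapping}; for $\ell<0$ one uses the other half of \eqref{rel:Unwrapping}, namely $E_1 Y_1 Y_2 = E_1$, to trade $Y_1^{-1}E_1$ for $Y_2 E_1$, and then the identity $Y_2 = T_1 Y_1 T_1$ combined with the absorptions $E_1 T_1^{\pm 1} = z^{\mp 1} E_1$ from \eqref{rel:Untwisting} returns the computation to positive powers of $Y_1$, which are then handled by the generating function $Z_0^+(u)$.

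For the inductive step, I would read \eqref{Z+product} as the one-step recursion
$$Z_i^+(u) + \alpha(u) \;=\; \rho_i(u)\,\bigl(Z_{i-1}^+(u) + \alpha(u)\bigr),\qquad \alpha(u) = \tfrac{z^{-1}}{q-q^{-1}} - \tfrac{u^2}{u^2-1},$$
$$\rho_i(u) = \frac{(u-Y_i)^2(u-q^{-2}Y_i^{-1})(u-q^{2}Y_i^{-1})}{(u-Y_i^{-1})^2(u-q^{2}Y_i)(u-q^{-2}Y_i)},$$
and analogously for $Z_i^-$ via \eqref{Z-product}. The task then becomes to prove the corresponding \emph{operator} transfer identity inside $W_k$,
$$E_{i+1}(u-Y_{i+1})^{-1}E_{i+1} + \alpha(u)\,E_{i+1} \;=\; \rho_i(u)\,\bigl(E_{i+1}(u-Y_i)^{-1}E_{i+1} + \alpha(u)\,E_{i+1}\bigr),$$
after which the inductive hypothesis (transported across the $E_{i+1}$'s using the commutativity of $Y_1,\ldots,Y_{i-1}$ with $E_{i+1}$, and with $E_{i+1}^2 = (1+(z-z^{-1})/(q-q^{-1}))E_{i+1}$ absorbed into the constants) repackages the right-hand side as $Z_i^+(u)\,E_{i+1}$, closing the induction. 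The negative-power statement is completely parallel, using \eqref{Z-product}.

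The central obstacle is the derivation of this transfer identity. The main inputs are the braid relation $Y_{i+1} = T_i Y_i T_i$, the skein identity $T_i - T_i^{-1} = (q-q^{-1})(1-E_i)$ from \eqref{Skein}, the absorption $E_{i+1}T_i^{\pm 1}E_{i+1} = z^{\mp 1}E_{i+1}$ from \eqref{rel:Untwisting}, and the unwrapping relation $E_{i+1}Y_{i+1}Y_{i+2} = E_{i+1}$ from \eqref{rel:Unwrapping}. After substituting $Y_{i+1}=T_iY_iT_i$ into $(u-Y_{i+1})^{-1}$ and pushing the resulting $T_i$'s through $E_{i+1}$ via the skein relation, the six factors $(u-q^{\pm 2}Y_i^{\pm 1})$ of $\rho_i(u)$ appear naturally from the shifts that $T_i$-conjugation induces on rational functions of $Y_i$; however, pairing and cancelling them to produce precisely the rational function $\rho_i(u)$ is a delicate piece of bookkeeping. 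This calculation is exactly the content of \cite[Lemma~7.4]{BB}, and is also carried out, in the present notation, in \cite[Theorem 3.6 and Remark 3.8]{DRV}, from which the proposition follows.
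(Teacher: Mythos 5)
The paper does not actually prove this proposition: it simply records that the result ``is equivalent to \cite[Lemma 7.4]{BB} and is also proved in \cite[Theorem 3.6 and Remark 3.8]{DRV}'' and moves on. Your proposal ends in the same place, deferring the key computation to exactly those references, and your base case (including the reduction of negative powers of $Y_1$ to positive ones via $E_1Y_1Y_2=E_1$, $Y_2=T_1Y_1T_1$, and the absorption relations \eqref{rel:Untwisting}) is fine. So if one is willing to accept the citation, you arrive where the paper arrives.

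The problem is the scaffolding you build around that citation: the ``transfer identity'' you propose to prove for the inductive step cannot be correct, and the inductive hypothesis is invoked where it cannot apply. Since $Y_i$ is a word in $T_1,\ldots,T_{i-1}$ and $X^{\varepsilon_1}$, it commutes with $E_{i+1}$, so the right-hand side of your identity collapses: $E_{i+1}(u-Y_i)^{-1}E_{i+1}=(u-Y_i)^{-1}E_{i+1}^2=\bigl(1+\tfrac{z-z^{-1}}{q-q^{-1}}\bigr)(u-Y_i)^{-1}E_{i+1}$, a rational function of $Y_i$ alone times $E_{i+1}$. The inductive hypothesis concerns $E_iY_i^{\ell}E_i$, not $E_{i+1}Y_i^{\ell}E_{i+1}$, so there is no place for $Z_{i-1}^{+}(u)$ --- which genuinely depends on $Y_1,\ldots,Y_{i-1}$ and on the parameters $Z_0^{(\ell)}$ --- to enter your right-hand side; indeed, if your identity held, then \eqref{Z+product} would force $Z_{i-1}^{+}(u)E_{i+1}$ to be a rational function of $Y_i$ times $E_{i+1}$, which is false in general. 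The induction must instead be routed through an inner $E_i$-sandwich: one expands $E_{i+1}\tfrac{u}{u-Y_{i+1}}E_{i+1}$ (using $Y_{i+1}=T_iY_iT_i$ and \eqref{Skein}) into terms of the form $E_{i+1}E_i f(Y_i)E_iE_{i+1}$, which are evaluated by the inductive hypothesis together with $E_{i+1}E_iE_{i+1}=E_{i+1}$, plus terms $g(Y_i)E_{i+1}$. That is the actual content of \cite[Lemma 7.4]{BB} and \cite[Theorem 3.6]{DRV}; as written, your argument would not assemble into a proof without it.
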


The following theorem uses the identity \eqref{Z+product} and the action of the 
affine BMW algebra on tensor space to provide a formula for
the Harish-Chandra images of the central elements 
$Z_V^{(\ell)} = \epsilon (\id\otimes \qtr_V)\left( (z\cR_{21}\cR)^\ell)\right)$
in the Drinfeld-Jimbo quantum group $U_h\fg$ for orthogonal and symplectic Lie algebras $\fg$.
By Theorem \ref{BMWaction} these central elements are natural parameters for
the affine BMW algebras.  

\begin{thm} \label{Z1action}  Let $U=U_h\fg$ be the Drinfeld-Jimbo quantum
group corresponding to $\fg = \fso_{2r+1}$, $\fsp_{2r}$ or $\fso_{2r}$ and
use notations for $\fh^*$ as in 
\eqref{cltype1}-\eqref{cvaluecl}.
Identify $U_0$ as a subalgebra of $\CC[L_1^{\pm1},\ldots, L_r^{\pm1}]$
where $\mathrm{ev}_{\varepsilon_i}(L_j) = q^{\langle \varepsilon_i, \varepsilon_j\rangle} = q^{\delta_{ij}}$ (so that $L_i=e^{\half hF_{ii}}$, where
$F_{ii}$ is as in \eqref{cartan}).
Let
$$y
= \begin{cases}
2r, &\hbox{if $\fg = \fso_{2r+1}$,} \\
2r+1, &\hbox{if $\fg = \fsp_{2r}$,} \\
2r-1, &\hbox{if $\fg = \fso_{2r}$,} 
\end{cases}
\qquad
\epsilon\, =
	\begin{cases}
		\!~~~1, &\hbox{if $\fg = \fso_{2r+1}$,} \\
		-1, &\hbox{if $\fg = \fsp_{2r}$,} \\
		\!~~~1, &\hbox{if $\fg = \fso_{2r}$,}
	\end{cases} \qquad
 V=L(\varepsilon_1),
$$
and $z = \epsilon q^y$. Let $Z_V^{(\ell)}$
be the central elements in $U_h\fg$ defined by 
$$Z_V^{(\ell)} = \epsilon (\id\otimes \qtr_V)\left( (z\cR_{21}\cR)^\ell)\right)
$$
and write
$$Z_V^+(u) = \sum_{\ell\in \ZZ_{\ge0}} Z_V^{(\ell)}u^{-\ell}
\qquad\hbox{and}\qquad
Z_V^-(u) = \sum_{\ell\in \ZZ_{\ge0}} Z_V^{(-\ell)}u^{-\ell}.
$$
Then
\begin{align*}
\pi_0&\left(Z_V^+(u) + \frac{z^{-1}}{q-q^{-1}} - \frac{u^2}{u^2-1}\right) \\
&= \left(\frac{z}{q-q^{-1}}\right) \frac{(u+q)(u-q^{-1})}{(u+1)(u-1)}
\frac{(u-\epsilon\,q^{2r - y})}{(u-\epsilon\,q^{y-2r})} \ 
\sigma_{\rho}\left(
\prod_{i=1}^r
\frac{(u-\epsilon\, L_i^{-2}q^{-1}) (u-\epsilon\, L_i^2 q^{-1})}
{(u-\epsilon L_i^{-2}q )(u-\epsilon L_i^2 q)}
\right)
\end{align*}
and
\begin{align*}
\pi_0&\left(Z_V^-(u) - \frac{z}{q-q^{-1}} - \frac{u^2}{u^2-1}\right) \\
&=
-\frac{z^{-1}}{(q-q^{-1})}\frac{(u - q)(u + q^{-1})}{(u+1)(u-1)}
\frac{(u-\epsilon\,q^{y-2r})}{(u-\epsilon\,q^{2r-y})}
\sigma_\rho\left(
\prod_{i=1}^r 
\frac{(u-\epsilon L_i^{-2}q )(u-\epsilon L_i^2 q)}
{(u-\epsilon\, L_i^{-2}q^{-1}) (u-\epsilon\, L_i^2 q^{-1})}
\right),
\end{align*}
where $\sigma_\rho$  is the 
algebra automorphism given by $\sigma_\rho(L_i) = q^{\langle \rho, \varepsilon_i\rangle}L_i$ and 
$\pi_0$ is the isomorphism in Theorem \ref{HCIso}.\end{thm}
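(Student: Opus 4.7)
The plan is to mirror, in the quantum setting, the proof strategy of Theorem \ref{z1action}, with the recursion \eqref{Z+product} (and \eqref{Z-product}) taking the role of \eqref{grproduct}, the quantum-trace identity $E_V^2 = \dim_q(V)E_V$ replacing its classical analogue, and the eigenvalue formula \eqref{fulltwist} replacing \eqref{gamma_value}. First I would apply Theorem \ref{BMWaction} with the Schur--Weyl setup $M = L(0) \otimes V^{\otimes(k-1)}$, so that on $L(0) \otimes V^{\otimes(k+1)}$ the algebra elements $E_k$ and $Y_k$ in $W_{k+1}$ act the way $E_1$ and $Y_1$ in $W_2$ acted on $M \otimes V^{\otimes 2}$ in the base case. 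Combined with Proposition \ref{Ziprop}, this gives $Z_V^{(\ell)} = Z_{k-1}^{(\ell)}$ as operators on $L(0) \otimes V^{\otimes(k-1)}$, hence $Z_V^\pm(u) = Z_{k-1}^\pm(u)$ there, so I can evaluate the claimed Harish--Chandra images by computing the right-hand side of \eqref{Z+product} on each isotypic summand of the decomposition $L(0) \otimes V^{\otimes(k-1)} \cong \bigoplus_\mu L(\mu) \otimes W_{k-1}^\mu$.

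The two ingredients needed for this computation are the action of $Z_0^+(u)$ and of the $Y_j$-product. Since $L(0) \otimes V \cong V$ is simple, \eqref{fulltwist} shows that $z\cR_{21}\cR$ acts on $L(0) \otimes V$ by the scalar $z$, so $Z_0^{(\ell)}\big|_{L(0)} = \epsilon \dim_q(V)\, z^\ell = \epsilon(\epsilon + [y])\,z^\ell$; a short manipulation using $z = \epsilon q^y$ then yields
$$Z_0^+(u)\big|_{L(0)} + \frac{z^{-1}}{q-q^{-1}} - \frac{u^2}{u^2 - 1} = \frac{z}{q-q^{-1}} \cdot \frac{(u+q)(u-q^{-1})}{(u+1)(u-1)} \cdot \frac{u - z^{-1}}{u - z}.$$
For the $Y_j$-product, the identification $Y_j = zX^{\varepsilon_j}$ together with \eqref{XiasRmatrix} and \eqref{fulltwist} gives the eigenvalues of $Y_j$ on the up-down-tableau basis of $W_{k-1}^\mu$: a step that adds a box $b$ contributes $zq^{2c(b)}$, and a step that removes a box $b$ contributes $z^{-1}q^{-2c(b)}$. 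A direct comparison of the corresponding factors in \eqref{Z+product} shows that a ``remove $b$'' factor is precisely the reciprocal of the corresponding ``add $b$'' factor, so the product telescopes over any up-down tableau of shape $\mu$ down to a product of rational functions indexed by the boxes of $\mu$.

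Simplifying this box-product one row at a time---in direct parallel with the row-by-row cancellation in the example following Theorem \ref{z1action}, but in the multiplicative $q$-setting---and then telescoping the $\mu$-independent contributions across rows $i = 1, \ldots, r$, the product collapses to a $\mu$-independent prefactor $(u - z)(u - \epsilon q^{2r-y})/((u - \epsilon q^{y-2r})(u - z^{-1}))$ multiplied by $\ev_{\mu+\rho}$ of the symmetric Laurent function in $L_1^{\pm 2}, \ldots, L_r^{\pm 2}$ appearing in the statement. Combined with the computed $Z_0^+|_{L(0)}$ factor, the $(u - z^{-1})/(u - z)$ there cancels the $(u - z)/(u - z^{-1})$ produced by the row-telescoping, and Theorem \ref{HCIso} converts $\ev_{\mu+\rho}$ into $\ev_\mu \circ \sigma_\rho$ to yield the stated formula for $\pi_0(Z_V^+(u) + z^{-1}/(q-q^{-1}) - u^2/(u^2 - 1))$. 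The argument for $Z_V^-(u)$ is identical, with \eqref{Z-product} obtained from \eqref{Z+product} by inverting each $Y_j$-factor and with the analogous computation of $Z_0^-(u)|_{L(0)}$ using that $(z\cR_{21}\cR)^{-1}$ acts on $L(0) \otimes V$ by $z^{-1}$. The main obstacle will be the careful $q$-exponent bookkeeping in the two-stage telescoping (first within each row, then across rows); this is the direct $q$-analogue of the cancellation already illustrated in the example following Theorem \ref{z1action}.
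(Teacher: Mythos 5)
Your proposal is correct and follows essentially the same route as the paper's own proof: reduction to $M=L(0)\otimes V^{\otimes(k-1)}$ via Proposition \ref{Ziprop}, the base-case computation of $Z_0^{\pm}(u)$ on $L(0)\otimes V$ using \eqref{fulltwist}, the up-down-tableau eigenvalues of the $Y_j$, the telescoping of \eqref{Z+product} to a product over the boxes of $\mu$, the row-by-row simplification, and the final conversion via Theorem \ref{HCIso}. The intermediate expressions you record (the base-case factor $\frac{z}{q-q^{-1}}\frac{(u+q)(u-q^{-1})}{(u+1)(u-1)}\frac{u-z^{-1}}{u-z}$ and the $\mu$-independent prefactor from the box product) match the paper's \eqref{L0action+} and \eqref{RightRowbyRowFact} exactly.
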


\begin{proof}
In the definition of the action of the affine BMW algebra in Theorem \ref{affaction}, $Y_1$ acts on $M \otimes V$ as $z \cR_{21} \cR$, and \\
\smallskip
\centerline{$E_1 Y_1^\ell E_1$ acts on  $M \otimes V^{\otimes 2}$ as $Z_V^{(\ell)} E_1$.}
\smallskip
Also\\
\smallskip
\centerline{$E_1$ and $Y_1$ in $W_2$ act on $M \otimes V^{\otimes 2}$ with $M = L(0) \otimes V^{\otimes (k-1)}$}
\smallskip
in the same way that \\
\smallskip
\centerline{$E_k$ and $Y_k$ in $W_{k+1}$ act on $M \otimes V^{\otimes (k+1)}$ with $M = L(0)$.}
\smallskip

By Proposition \ref{Ziprop}, $Z_{k-1}^{(\ell)}E_k = E_k Y_k^\ell E_k$ and so it follows that, as operators on $L(0) \otimes V^{\otimes (k-1)}$, 
\begin{align}
Z_V^+(u) + &\frac{z^{-1}}{q-q^{-1}} - \frac{u^2}{u^2-1} 
= Z_{k-1}^+(u) + \frac{z^{-1}}{q-q^{-1}} - \frac{u^2}{u^2-1} 
\end{align}
and 
\begin{align}
Z_V^-(u) - &\frac{z}{q-q^{-1}} - \frac{u^2}{u^2-1}
= Z_{k-1}^-(u) - \frac{z}{q-q^{-1}} - \frac{u^2}{u^2-1} 
\end{align}
We will use  \eqref{Z+product} and  \eqref{Z-product} to compute the action of these
operators on the $L(\mu) \otimes W_{k-1}^\mu$ isotypic component in the
$U_h\fg\otimes W_{k-1}$-module decomposition
\begin{equation}\label{tensordecomp}
L(0)\otimes V^{\otimes (k-1)} \cong \bigoplus_\mu L(\mu)\otimes W_{k-1}^\mu.
\end{equation}

As an operator on $L(0)\otimes V$,
$z(\cR_{21}\cR) 
=z
q^{\< \varepsilon_1,\varepsilon_1+2\rho\>
-\<\varepsilon_1,\varepsilon_1+2\rho\>
+\< 0,0+2\rho\>}
=z$.
Hence
$$Z_V^{(\ell)} = \epsilon\, (\id\otimes\qtr_V)((z\cR_{21}\cR)^\ell) = z^\ell \epsilon\, \dim_q(V).
$$
Therefore, since $\displaystyle{\epsilon\, \dim_q(V) = \frac{z-z^{-1}}{q-q^{-1}}+1}$,
\begin{align*}
Z^+_V(u) 
= \sum_{\ell\in \ZZ_{\ge0}}  \epsilon\, \dim_q(V)z^\ell u^{-\ell} 
&= \epsilon\, \dim_q(V) \frac{1}{1-zu^{-1}}
=\frac{z-z^{-1}+(q-q^{-1})}{(q-q^{-1})(1-zu^{-1})}.
\end{align*}
A similar computation of $Z_V^-$ yields
$$
Z^-_V(u) 
=\frac{z-z^{-1}+q-q^{-1}}{(q-q^{-1})(1-z^{-1}u^{-1})}.
$$
Thus,  as operators on $L(0)\otimes V$, 
\begin{equation}\label{L0action+}
Z_V^+ + \frac{z^{-1}}{q-q^{-1}} - \frac{u^2}{u^2-1}
= \frac{z}{(q-q^{-1})}\frac{(1-z^{-1}u^{-1})}{(1-zu^{-1})}\frac{(u+q)(u-q^{-1})}{(u+1)(u-1)}
\end{equation}
and
\begin{equation}\label{L0action-}
Z_V^{-} - \frac{z}{q-q^{-1}} - \frac{u^2}{u^2-1}
= \frac{-z^{-1}}{(q-q^{-1})}\frac{(1-zu^{-1})}{(1-z^{-1}u^{-1})}\frac{(u-q)(u+q^{-1})}{(u+1)(u-1)}.
\end{equation}

By \eqref{XiasRmatrix} and the definition of $\Phi$ in Theorem \ref{affaction},
$$Y_k\in W_k
\quad\hbox{acts on}\quad L(0)\otimes V^{\otimes k}
=(L(0)\otimes V^{\otimes(k-1)})\otimes V
\quad\hbox{as}\quad
z\cR_{21}\cR.$$
If $L(\mu)$ is an irreducible $U_h\fg$-module in $L(0) \otimes V^{\otimes (k-1)}$, 
then \eqref{fulltwist} and \eqref{cvaluecl} give that $Y_k$ acts on the 
$L(\lambda)$ component of 
$L(\mu)\otimes V$ by the constant $\epsilon q^{2c(\lambda,\mu)}=1\cdot q^0=1$, when
$\fg=\fso_{2r+1}$ and $\lambda=\mu$, and by the constant
$$
\epsilon q^{2c(\lambda,\mu)}
= \begin{cases}
\epsilon q^{y + 2c(\lambda/\mu)}, &\hbox{if $\mu\subseteq \lambda$,} \\
\epsilon q^{- y -2c(\mu/\lambda)}, &\hbox{if $\mu\supseteq \lambda$,} \\
\end{cases}
= \begin{cases}
zq^{2c(\lambda/\mu)}, &\hbox{if $\mu\subseteq \lambda$,} \\
z^{-1}q^{-2c(\mu/\lambda)}, &\hbox{if $\mu\supseteq \lambda$,} \\
\end{cases},
\qquad\hbox{where $\lambda=\mu\pm\varepsilon_i$.}
$$
and $c(\lambda,\mu)$ is as computed in \eqref{onebox}.
As in \cite[Theorem 6.3(b)]{OR},
 the irreducible $W_k$-module $W_k^{\mu/0}=W_k^\mu$
 has a basis $\{v_T\}$ indexed by \emph{up-down tableaux} $T= (T^{(0)},T^{(1)},
 \cdots, T^{(k)})$, where $T^{(0)} = \emptyset$, $T^{(k)} = \mu$, and 
 $T^{(i)}$ is a partition obtained from $T^{(i-1)}$ by adding or removing a box,
 and
$$Y_iv_T = \begin{cases}
zq^{2c(b)} v_T, &\hbox{if $b = T^{(i)}/T^{(i-1)}$}, \\
z^{-1}q^{-2c(b)} v_T, &\hbox{if $b = T^{(i-1)}/T^{(i)}$},\\
v_T,& \hbox{if $T^{(i-1)}=T^{(i)}$.}
\end{cases}
$$
Thus
\begin{align*}
\prod_{i=1}^{k-1}
\frac
{(u-Y_i)^2(u-q^{-2}Y_i^{-1})(u-q^2Y_i^{-1})}
{(u-Y_i^{-1})^2(u-q^2Y_i)(u-q^{-2}Y_i)}
\qquad\hbox{acts on $L(\mu)\otimes W_{k-1}^\mu$ in \eqref{tensordecomp}}
\end{align*}
by
\begin{equation}\label{UDTABform}
\prod_{i=1}^{k-1} 
\frac{(u-\epsilon\, q^{2c(T^{(i)},T^{(i-1)})})^2(u-\epsilon\, q^{-2}q^{-2c(T^{(i)},T^{(i-1)})})(u-\epsilon\, q^2q^{-2c(T^{(i)},T^{(i-1)})})
}
{(u - \epsilon\, q^{-2c(T^{(i)},T^{i-1})})^2( u - \epsilon\, q^2q^{2c(T^{(i)},T^{i-1})})(u-\epsilon\, q^{-2}q^{2c(T^{(i)},T^{i-1})}) 
 }
\end{equation}
for any up-down tableau $T$ of length $k$ and shape $\mu$. 
If a box is added (or removed) at step $i$ and then removed (or added) at step $j$, then the $i$ and $j$ factors of this product cancel. Therefore
\eqref{UDTABform} is equal to
\begin{equation}\label{AllBoxesform}
\prod_{b\in \mu} 
\frac{(u-zq^{2c(b)})^2(u-z^{-1}q^{-2(c(b)+1)})(u-z^{-1}q^{-2(c(b) -1)})}
{(u -z^{-1}q^{-2c(b)})^2( u - zq^{2(c(b)+1)}) (u-zq^{2(c(b)-1)})  }.
\end{equation}
Simplifying one row at a time,
\begin{align*}
\prod_{b\in\mu}
\frac{(u-z^{-1}q^{-2(c(b)-1)})(u-z^{-1}q^{-2(c(b)+1)})}{(u-z^{-1} q^{-2c(b)})(u-z^{-1}q^{-2c(b)})}
&= \prod_{i=1}^r \frac{(u-z^{-1}q^{-2(-i)})(u-z^{-1}q^{-2(\mu_i-i+1)})}{(u-z^{-1}q^{-2(-(i-1))})(u-z^{-1}q^{-2(\mu_i-i)})}\\
&= \frac{u - z^{-1}q^{2r}}{u- z^{-1}q^{2\cdot 0}} \prod_{i=1}^r  \frac{u-z^{-1}q^{-2(\mu_i-i+1)}}{u-z^{-1}q^{-2(\mu_i-i)}}\
\end{align*}
if $\mu = (\mu_1, \dots , \mu_r)$. It follows that 
 \eqref{AllBoxesform} is equal to
\begin{align}
\frac{(u-z^{-1}q^{2r})}{(u-z^{-1})}
&\frac{(u-z)}{(u-zq^{-2r})}
\prod_{i=1}^r \frac{(u-z^{-1}q^{-2(\mu_i-i+1)})}
{(u-z^{-1}q^{-2(\mu_i-i)})}
\cdot \frac{(u-zq^{2(\mu_i-i)})}
{(u-zq^{2(\mu_i-i+1)})}
\nonumber \\
&=\frac{(u-\epsilon \, q^{-(y-2r)})}{(u-z^{-1})}
\frac{(u-z)}{(u-\epsilon \, q^{y-2r})}
\mathrm{ev}_{\mu+\rho}\left(
\prod_{i=1}^r 
\frac{(u-\epsilon\, L_i^{-2}q^{-1}) (u-\epsilon\, L_i^2 q^{-1})}
{(u-\epsilon L_i^{-2}q )(u-\epsilon L_i^2 q)}
\right), \label{RightRowbyRowFact}
\end{align}
since $z^{-1}q^{2r} = \epsilon q^{-y} q^{2r} = \epsilon q^{2r-y}$ and
$$
\mathrm{ev}_{\mu+\rho}(L_i^2)
=q^{\langle \mu+\rho, 2\varepsilon_i\rangle}
= q^{2\mu_i + (y-2i+1)}
= q^{y + 1+ 2(\mu_i-i)}=\epsilon zq^{2(\mu_i-i)+1}.
$$
Combining \eqref{L0action+} and \eqref{RightRowbyRowFact},
the identity  \eqref{Z+product} gives that, as operators on $L(\mu)\otimes W_{k-1}^\mu$
in \eqref{tensordecomp},
\begin{align}
&Z_k^+ +\frac{z^{-1}}{q-q^{-1}}-\frac{u^2}{u^2-1}  \\
&=
\frac{z}{(q-q^{-1})}\frac{(u+q)(u-q^{-1})}{(u+1)(u-1)}
\frac{(u-\epsilon\,q^{2r-y})}{(u-\epsilon\,q^{y-2r})}
\mathrm{ev}_{\mu+\rho}\left(
\prod_{i=1}^r 
\frac{(u-\epsilon\, L_i^{-2}q^{-1}) (u-\epsilon\, L_i^2 q^{-1})}
{(u-\epsilon L_i^{-2}q )(u-\epsilon L_i^2 q)}
\right).
\label{Z+RHS}\notag
\end{align}
Similarly,
$Z_k^- - \frac{z}{q-q^{-1}} + \frac{1}{u^2-1}$ 
acts on the 
$L(\mu)\otimes W_{k-1}^\mu$ isotypic component in the
$U_h\fg\otimes W_{k-1}$-module decomposition in \eqref{tensordecomp}
by
\begin{align*} 
&-\frac{z^{-1}}{(q-q^{-1})}\frac{(u - q)(u + q^{-1})}{(u+1)(u-1)}
\frac{(u-\epsilon\,q^{2r-y})}{(u-\epsilon\,q^{y-2r})}
\mathrm{ev}_{\mu+\rho}\left(
\prod_{i=1}^r 
\frac{(u-\epsilon L_i^{-2}q )(u-\epsilon L_i^2 q)}
{(u-\epsilon\, L_i^{-2}q^{-1}) (u-\epsilon\, L_i^2 q^{-1})}
\right).
\end{align*}
By Theorem \ref{HCIso}, the desired results follow. 

\end{proof}

In the following corollary, we shall repackage Theorem \ref{Z1action} to give a formula
for the Harish-Chandra image of $Z_V^{(\ell)}$ in terms of ``Weyl characters''.  To do this
we will use the universal characters of \cite{KT} following the notation in 
\cite[\S 6]{HR}.  For a formal alphabet $Y$ let 
$sa_\lambda(Y)$ be the universal Weyl character for $\fgl_r$, 
$sp_\lambda(Y)$ the universal Weyl character for $\fsp_{2r}$, and 
$so_\lambda(Y)$ the universal Weyl character for the orthogonal cases.

The Cauchy-Littlewood identities (see \cite[Lemma 1.5.1]{KT},
\cite[Theorems 7.8FG and 7.9C]{We}, and \cite[(6.4) and (6.5)]{HR}) are
$$\prod_{i,j}\frac{1}{1-x_iy_j}
=\Omega(XY) = \sum_\lambda sa_\lambda(X)sa_\lambda(Y),$$
$$\prod_{i\le j}\frac{1}{1-y_iy_j}\prod_{i,j}\frac{1}{1-x_iy_j}
=\Omega(XY - sa_{(2)}(Y)) = \sum_\lambda sa_\lambda({Y})so_\lambda({X}),$$
$$\prod_{i< j}\frac{1}{1-y_iy_j}\prod_{i,j}\frac{1}{1-x_iy_j}
=\Omega(XY - sa_{(1^2)}(Y)) = \sum_\lambda sa_\lambda({Y})sp_\lambda({X}),$$
where $\Omega$ is the Cauchy kernel (see \cite[(6.3)]{HR}) and the first equality in each line is for the 
formal alphabets $X=\sum_i x_i$ and $Y = \sum_j y_j$.
The identity \cite[Lemma 6.7(a)]{HR} states
\begin{equation}\label{HRidentity}
sa_\lambda ((q-q^{-1})u^{-1}) = \begin{cases}
(q-q^{-1})u^{-\ell}(-q^{-1})^{\ell-m}q^{m-1}, & \text{ if } \lambda = (m, 1^{\ell-m}),\\
0, & \text{ otherwise.}
\end{cases}
\end{equation}

\begin{cor}\label{cor:ZactionToBaumann}
In the same setting as in Theorem \ref{Z1action}, let
$$y
= \begin{cases}
2r, &\hbox{if $\fg = \fso_{2r+1}$,} \\
2r+1, &\hbox{if $\fg = \fsp_{2r}$,} \\
2r-1, &\hbox{if $\fg = \fso_{2r}$,} 
\end{cases}
\qquad
\epsilon\, =
	\begin{cases}
		\!~~~1, &\hbox{if $\fg = \fso_{2r+1}$,} \\
		-1, &\hbox{if $\fg = \fsp_{2r}$,} \\
		\!~~~1, &\hbox{if $\fg = \fso_{2r}$,}
	\end{cases} \qquad
 V=L(\varepsilon_1),
$$
$z=\epsilon q^y$, and let $Z_V^{(\ell)}$ be the 
central elements in the Drinfeld-Jimbo quantum group $U_h\fg$ which are 
given by $Z_V^{(\ell)}= 
\epsilon (\id\otimes \qtr_{L(\varepsilon_1)})((z\cR_{21}\cR)^\ell)$.
Let $X$
be the formal alphabet given by
$X= \sum_{i \in \hat V} L_i^2$ and fix $c=1$ if $\ell$ is even and $c=0$ if $\ell$ is odd.
%
%
Then for $\ell \geq 1$,
$$\pi_0(Z^{(\ell)}_V) = \sigma_\rho\left( c+  z\epsilon^{\ell}
\sum_{m=1}^\ell(q-q^{-1})(-1)^{\ell-m}q^{-(\ell-2m+1)}s_{(m,1^{\ell-m})}(X)
\right)
$$
where $s_{(m,1^{\ell-m})}(X) = so_{(m,1^{\ell-m})}(X)$ in the orthogonal cases and
$s_{(m,1^{\ell-m})}(X) = sp_{(m,1^{\ell-m})}(X)$ in the symplectic case.  
\end{cor}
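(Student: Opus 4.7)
The plan is to extract the coefficient of $u^{-\ell}$ from the generating function identity in Theorem \ref{Z1action} and repackage the result using the Cauchy-Littlewood identity together with the hook-collapsing identity \eqref{HRidentity}. This is the natural companion to Theorem \ref{z1action}, but with the extra plethystic structure of super-alphabets replacing the straightforward row-by-row cancellation of the degenerate case.

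First I would rewrite the right-hand side of Theorem \ref{Z1action} in a form uniform across the three classical types. The three scalar factors $\frac{(u+q)(u-q^{-1})}{(u+1)(u-1)}$, $\frac{u-\epsilon q^{2r-y}}{u-\epsilon q^{y-2r}}$, and the product over $i=1,\dots,r$ can be combined to give
$$\pi_0\!\left(Z_V^+(u) + \tfrac{z^{-1}}{q-q^{-1}} - \tfrac{u^2}{u^2-1}\right) = \frac{z\,(u^2-q^{2\epsilon})}{(q-q^{-1})(u^2-1)} \, \sigma_\rho\!\left(\prod_{x\in X}\frac{u-\epsilon xq^{-1}}{u-\epsilon xq}\right),$$
where in the $\fso_{2r+1}$ case the alphabet $X$ absorbs the extra factor $(u-q)/(u-q^{-1})$ via the unit weight $L_0^2 = 1$, while in the $\fsp_{2r}$ and $\fso_{2r}$ cases the ``extra factor'' cancels one of $(u+q)$, $(u-q^{-1})$ in the numerator.

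Next I would apply the Cauchy-Littlewood identity with the virtual alphabet $Y = \epsilon(q-q^{-1})u^{-1}$, viewed as the super-alphabet $\{\epsilon q u^{-1}\} - \{\epsilon q^{-1}u^{-1}\}$. Direct plethystic computation gives $\Omega(XY) = \prod_{x\in X}(u-\epsilon xq^{-1})/(u-\epsilon xq)$, while $h_2(Y)$ (orthogonal case) and $e_2(Y)$ (symplectic case) both reduce to scalar monomials in $u^{-2}$ equal to $(q^{2\epsilon}-1)u^{-2}$ up to sign. The Cauchy-Littlewood identity then converts the product into a weighted character sum:
$$\prod_{x\in X}\frac{u-\epsilon x q^{-1}}{u-\epsilon x q} = \bigl(1 - (q^{2\epsilon}-1)u^{-2}\bigr) \sum_\lambda s_\lambda(X) \, sa_\lambda(Y),$$
where $s_\lambda = so_\lambda$ (orthogonal) or $sp_\lambda$ (symplectic). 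Substituting back and reconciling the rational prefactor with the shifts $\tfrac{u^2}{u^2-1}$ and $\tfrac{z^{-1}}{q-q^{-1}}$ on the left, one obtains
$$\pi_0(Z_V^+(u)) = \pi_0(Z_V^{(0)}) + \frac{1}{u^2-1} + z\,\sigma_\rho\!\left(\sum_{\lambda \neq \emptyset} s_\lambda(X)\, sa_\lambda(Y)\right).$$

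Finally, I would extract the coefficient of $u^{-\ell}$ for $\ell \geq 1$. By \eqref{HRidentity}, $sa_\lambda((q-q^{-1})u^{-1})$ vanishes unless $\lambda$ is a hook, and for $\lambda = (m, 1^{\ell-m})$ the explicit formula gives $(q-q^{-1})u^{-\ell}(-q^{-1})^{\ell-m}q^{m-1}$. Using the homogeneity $\epsilon^\ell sa_\lambda((q-q^{-1})u^{-1}) = sa_\lambda(\epsilon(q-q^{-1})u^{-1})$ absorbs the sign $\epsilon^\ell$, and the Taylor expansion $(u^2 - 1)^{-1} = u^{-2} + u^{-4} + \cdots$ yields the parity constant $c$ which equals $1$ when $\ell$ is even and $0$ when $\ell$ is odd. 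Assembling these pieces gives the stated formula. The main obstacle is the algebraic bookkeeping in the second step: carefully verifying that the rational prefactor $\frac{u^2-q^{2\epsilon}}{u^2-1}$ from Step 1, when combined with the $\Omega(t_2(Y))^{-1} = 1 - (q^{2\epsilon}-1)u^{-2}$ correction from the Cauchy-Littlewood identity and with the explicit offsets $\tfrac{u^2}{u^2-1}$ and $-\tfrac{z^{-1}}{q-q^{-1}}$ in Theorem \ref{Z1action}, telescopes cleanly against the constant $\pi_0(Z_V^{(0)}) = \epsilon \dim_q V$ so that the surviving $u$-dependent part is precisely the hook sum plus the residual $(u^2-1)^{-1}$ contributing the parity constant.
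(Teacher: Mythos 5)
Your outline follows the paper's own proof essentially step for step: rewrite Theorem \ref{Z1action} in the type-uniform form with prefactor $\frac{u^2-q^{2\epsilon}}{u^2-1}$ and a product over the full alphabet $\hat V$ (this is exactly \eqref{Z1actionRe}), apply the orthogonal/symplectic Cauchy--Littlewood identities with the virtual alphabet $Y=(q-q^{-1})(\epsilon u)^{-1}$, collapse to hook shapes via \eqref{HRidentity}, absorb the sign $\epsilon^{\ell}$ by homogeneity, and read off the parity constant $c$ from $u^2/(u^2-1)=\sum_{k\ge0}u^{-2k}$. So the strategy is sound and is the intended one.

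There is, however, a concrete error in your Step 2 which would make the verification you describe in your final paragraph fail as set up. The degree-two correction $sa_{(2)}(Y)$ (resp.\ $sa_{(1^2)}(Y)$) must be fed into the Cauchy kernel $\Omega$ as a \emph{virtual alphabet}, namely $\{q^{2\epsilon}u^{-2}\}-\{u^{-2}\}$, and not as the single scalar monomial $(q^{2\epsilon}-1)u^{-2}$ obtained by evaluating it. Consequently the correction factor is
$$\frac{1-q^{2\epsilon}u^{-2}}{1-u^{-2}}\ =\ \frac{u^2-q^{2\epsilon}}{u^2-1},$$
a ratio of two linear factors, and not $1-(q^{2\epsilon}-1)u^{-2}$; the two expressions agree only through order $u^{-2}$. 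The correct factor is \emph{identical} to the rational prefactor you produced in Step 1, so
$$\frac{u^2-q^{2\epsilon}}{u^2-1}\prod_{x\in X}\frac{u-\epsilon x q^{-1}}{u-\epsilon x q}\ =\ \sum_\lambda sa_\lambda(Y)\,s_\lambda(X)$$
holds on the nose: there is no residual bookkeeping, the ``telescoping'' is an exact cancellation, and what survives is precisely the hook sum together with $\frac{u^2}{u^2-1}$ and the constant term. With your factor $1-(q^{2\epsilon}-1)u^{-2}$, the leftover rational function $\frac{(u^2-q^{2\epsilon})(u^2-q^{2\epsilon}+1)}{u^2(u^2-1)}$ does not reduce to $1$ and would contaminate every coefficient of $u^{-\ell}$, which is exactly the step you flag as the main obstacle. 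Once the correction is computed plethystically as above, the rest of your plan goes through and coincides with the paper's argument.
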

\begin{proof} Let $\hat V$ as in \eqref{Vhat}, $L_{-i}=L_i^{-1}$ where $L_i$ is as in the
statement of Theorem \ref{Z1action}, and let $L_{\varepsilon_0} = 1$.
The identity in Theorem \ref{Z1action} can be rewritten as
\begin{align}
\pi_0\left( Z_V^+(u) + \frac{z^{-1}}{q-q^{-1}} - \frac{u^2}{u^2-1} \right)
       &= \sigma_\rho\left(
               \frac{z}{q-q^{-1}} \frac{u^2-q^{2\epsilon}}{u^2-1}
               \prod_{j\in \hat V}
                       \frac{(1- L_{\wt(v_j)}^2q^{-1}(\epsilon u)^{-1})}
                       {(1- L_{\wt(v_j)}^2q(\epsilon u)^{-1})}\right). \label{Z1actionRe}
\end{align}
By \eqref{HRidentity},
$$sa_{(2)}((q-q^{-1})(\epsilon u)^{-1}) 
=(q^2-1)(\epsilon u)^{-2}
\quad\hbox{and}\quad
sa_{(1^2)}((q-q^{-1})(\epsilon u)^{-1}) 
= (q^{-2}-1)(\epsilon u)^{-2}.
$$
So the Cauchy-Littlewood identities give
\begin{align*}
&\frac{(1-q^2(\epsilon u)^{-2})}{1-(\epsilon u)^{-2}} \prod_{i \in \hat V}
\frac{(1-L_i^{2}q^{-1}(\epsilon u)^{-1})}{(1- L_i^{2}q(\epsilon u)^{-1})} \\
&\qquad=\Omega \left(X(q- q^{-1})(\epsilon u)^{-1} - sa_{(2)}((q-q^{-1})(\epsilon u)^{-1})\right)
=\sum_\lambda sa_\lambda((q- q^{-1})(\epsilon u)^{-1})so_\lambda(X) \\
&\qquad= \sum_{\ell \in \ZZ_{\geq 0}} \left(
\sum_{m=1}^\ell (q- q^{-1})(-q^{-1})^{\ell-m}q^{m-1}so_{(m,1^{\ell-m})}(X)
\right)(\epsilon u)^{-\ell}\\
&\qquad= \sum_{\ell \in \ZZ_{\geq 0}} \epsilon^{\ell}(q- q^{-1})\left(
\sum_{m=1}^\ell(q-q^{-1})(-1)^{\ell-m}q^{-(\ell-2m+1)}so_{(m,1^{\ell-m})}(X)
\right) u^{-\ell}
\end{align*}
in the orthogonal case, and 
\begin{align*}
&\frac{(1-q^{-2}(\epsilon u)^{-2})}{1-(\epsilon u)^{-2}} \prod_{i \in \hat V}
\frac{(1-L_i^{2}q^{-1}(\epsilon u)^{-1})}{(1- L_i^{2}q(\epsilon u)^{-1})} \\
&\qquad =\Omega \left(X(q- q^{-1})({\epsilon}u)^{-1} - sa_{(1^2)}((q-q^{-1})({\epsilon}u)^{-1})\right)
=\sum_\lambda sa_\lambda((q- q^{-1})(\epsilon u)^{-1})sp_\lambda(X) \\
&\qquad = \sum_{\ell \in \ZZ_{\geq 0}} \left(
\sum_{m=1}^\ell (q- q^{-1})(-q^{-1})^{\ell-m}q^{m-1}sp_{(m,1^{\ell-m})}(X)
\right)(\epsilon u)^{-\ell}
\\
&\qquad = \sum_{\ell \in \ZZ_{\geq 0}}  \epsilon^{\ell}(q- q^{-1})\left(
\sum_{m=1}^\ell (-1)^{\ell-m}q^{-(\ell-2m+1)}sp_{(m,1^{\ell-m})}(X)
\right)u^{-\ell}
\end{align*}
in the symplectic case.  The statement now follows by noting that 
$u^2/(u^2-1)=1/(1-u^{-2})=\sum_{k\in \ZZ_{\ge 0}} u^{-2k}$ and taking the coefficient of 
$u^{-\ell}$ on each side of \eqref{Z1actionRe}.
\end{proof}

\section{Symplectic and orthogonal higher Casimir elements}
\label{sec:connections}
Our final goal in this paper will be to connect the central elements appearing naturally 
as parameters of the affine and degenerate affine BMW algebras (see Theorems \ref{degBMWaction} and \ref{BMWaction}) 
to higher Casimir elements for orthogonal and symplectic Lie 
algebras and quantum groups. In the degenerate case, we explain how the generating function
for $z_V^{(\ell)}$ derived in Theorem \ref{z1action} can be matched up with the generating functions for
central elements given by Perelomov-Popov in \cite{PP1, PP2}. Expositions of the Perelomov-Popov results are also  in \cite[\S 7.1]{Mo} and \cite[\S 127]{Zh}.
In the affine case we show how the formula for $Z_V^{(\ell)}$ in Corollary \ref{cor:ZactionToBaumann}
can be derived as a special case of a remarkable identity for central elements in quantum groups
discovered by Baumann \cite[Thm.\ 1]{Bau}. 

\subsection{The central elements $z_V^{(\ell)}$ as higher Casimir elements}
\label{sec:Nazarov-to-PP}

Returning to the notation developed in the preliminaries of Section \ref{sec:classical-actions}, let $\fg = \fgl_r$ with nondegenerate $\ad$-invariant form $\<,\>$ as in \eqref{glform}  and operator
$\gamma = \gamma^{\fgl}$ as in \eqref{glgamma}. Then

\begin{align*}
(\id\otimes \tr_V)(\gamma^\ell) 
&= \sum_{i_1,i_2,\ldots, i_\ell} E_{i_1i_2}E_{i_2i_3}\cdots E_{i_\ell i_1} 
\end{align*}
are the central elements of $U\fgl_n$ found, for example, in Gelfand 
\cite[(3)]{Ge}.
Perelomov-Popov \cite{PP1,PP2} generalized this construction to
$\fg=\fso_{2r+1}, \fsp_{2r}$, and $\fso_{2r}$, 
by letting $F_{ij}$ be the natural spanning set
for $\fg$ given in \eqref{natspanningset}, viewing $F = (F_{ij})_{i,j\in \hat V}$ 
as a matrix with entries in $\fg$, and writing
\begin{equation}\label{PPelts}
\tr F^k = \sum_{i_1,i_2,\ldots, i_k \in \hat V} F_{i_1i_2}F_{i_2i_3}\cdots F_{i_ki_1}
\qquad\hbox{for $k\in \ZZ_{>0}$},
\end{equation}
as an element of the enveloping algebra $U\fg$ (see \cite[Thm.\ 7.1.7]{Mo}). These elements are central in $U\fg$ and Perelomov-Popov gave the following generating function formula for their Harish-Chandra images (see \cite[\S127]{Zh}).  The proof we give below shows that the result of Perelemov-Popov is equivalent to Theorem \ref{z1action} (which we obtained from the degenerate affine BMW algebra and Schur-Weyl duality). A proof of Theorem \ref{PP} using the theory of twisted Yangians is given in \cite[\S 7.1]{Mo}.  

\begin{thm}\label{PP} (Perelomov-Popov) \cite[Cor.\ 7.1.8]{Mo}  Let $\fg = \fso_{2r+1}$ or $\fsp_{2r}$ or $\fso_{2r}$, use notations for $\fh^*$ as in 
Section \ref{sec:classical-actions} and let $h_1,\ldots, h_r$ be the basis of $\fh$ dual to the orthonormal basis
$\varepsilon_1, \ldots, \varepsilon_r$ of $\fh^*$.
Let $\rho'_r = \half - \half y$, $l_0=0$ in the case that $\fg=\fso_{2r+1}$ and let
$$l_i = - l_{-i} = h_i + \rho_i,
\ \ \hbox{for $i=1,2,\ldots, r$,}
\qquad\hbox{where $\rho_i = \half(y-2i+1)$.}
$$
Then
\begin{align}\label{PPformula}
\pi_0\left(1+\frac{x+\half}{x+\half-\half\epsilon }
\Big(
\sum_{\ell\in \ZZ_{\ge 0}} \frac{(-1)^\ell \tr(F^k)}{(x+\rho'_r)^{\ell+1}}
\Big)\right) 
= \prod_{i\in \hat V} \frac{x+l_i+1}{x+l_i}. 
\end{align}
\end{thm}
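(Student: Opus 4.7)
The plan is to deduce Theorem \ref{PP} from Theorem \ref{z1action} by matching generating functions, after first identifying the Perelomov-Popov traces with the tensor-trace elements $(\id\otimes\tr_V)(\gamma^\ell)$ up to a sign.

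View $F = \sum_{i,j\in\hat{V}} F_{ij}\otimes E_{ij}$ and $\gamma = \sum_{i,j\in\hat{V}} F_{ij}\otimes E_{ji}$ as $N\times N$ matrices with entries in $U\fg$, so that $\tr(F^\ell) = (\id\otimes \tr_V)(F^\ell)$ while $(\id\otimes\tr_V)(\gamma^\ell) = \tr((F^T)^\ell)$. The defining redundancy $F_{ij} = -\theta_{ij}F_{-j,-i}$ translates to the matrix identity $F^T = -JFJ^{-1}$ in $\End(V)\otimes U\fg$, verified case-by-case using $\theta_{ij} = 1$, $J_{ij} = \delta_{i,-j}$ for $\fso$, and $\theta_{ij} = \mathrm{sgn}(i)\mathrm{sgn}(j)$, $J_{ij} = \mathrm{sgn}(i)\delta_{i,-j}$ for $\fsp$. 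Since $J$ has scalar entries, $(F^T)^\ell = (-1)^\ell J F^\ell J^{-1}$ and cyclicity of the trace yields
$$(\id\otimes\tr_V)(\gamma^\ell) \ =\ (-1)^\ell \tr(F^\ell) \qquad (\ell \geq 0).$$

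Expanding $z_V(u) = \sum_\ell z_V^{(\ell)} u^{-\ell}$ as a geometric series in $\gamma$ and applying the identity above (the $\ell = 0$ term contributing $\dim V = y+\epsilon$), one obtains
$$z_V(u) \ =\ \epsilon u\,(\id\otimes\tr_V)\bigl((u - \tfrac{y}{2} - \gamma)^{-1}\bigr) \ =\ \epsilon u\sum_{\ell \geq 0}\frac{(-1)^\ell \tr(F^\ell)}{(u - y/2)^{\ell+1}}.$$
Substituting $u = x + \tfrac{1}{2}$, so that $u - \tfrac{y}{2} = x + \rho'_r$ and $u - \tfrac{\epsilon}{2} = x + \tfrac{1}{2} - \tfrac{\epsilon}{2}$, the Perelomov-Popov sum equals $\pi_0(z_V(u))/(\epsilon u)$, and the claim \eqref{PPformula} becomes equivalent to
$$\pi_0(z_V(u)) \ =\ (\epsilon u - \tfrac{1}{2})\Bigl(\prod_{i\in\hat{V}} \tfrac{x+l_i+1}{x+l_i} \;-\; 1\Bigr).$$

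This is matched against Theorem \ref{z1action} by direct computation. Using $\sigma_\rho(h_i) = l_i$, $l_{-i} = -l_i$, and $u = x+\tfrac{1}{2}$, the product from Theorem \ref{z1action} becomes $\prod_{i\in\hat{V}\setminus\{0\}}(x+l_i+1)/(x+l_i)$. After subtracting $\epsilon u - \tfrac{1}{2}$ from both sides, the identity reduces to $(\epsilon u+\tfrac{1}{2})\tfrac{u+y/2-r}{u-y/2+r}\,\sigma_\rho(\prod) = (\epsilon u - \tfrac{1}{2})\prod_{\hat V}$. For $\fsp_{2r}$ and $\fso_{2r}$, one checks that $\tfrac{u+y/2-r}{u-y/2+r} = \tfrac{\epsilon u - 1/2}{\epsilon u + 1/2}$, which supplies exactly the needed prefactor; for $\fso_{2r+1}$, this ratio is the indeterminate $\tfrac{0}{0}$ (with limit value $1$), and the extra $l_0 = 0$ factor $\tfrac{x+1}{x}$ in $\prod_{\hat V}$ makes up the difference. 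The main obstacle is the first step -- translating the orthogonal/symplectic structure into the matrix identity $F^T = -JFJ^{-1}$ and deducing the trace identity $(\id\otimes\tr_V)(\gamma^\ell) = (-1)^\ell\tr(F^\ell)$; the rest is manipulation of rational functions.
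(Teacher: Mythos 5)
Your proposal is correct and follows essentially the same route as the paper's proof: establish $(\id\otimes \tr_V)(\gamma^\ell)=(-1)^\ell\tr(F^\ell)$ by exhibiting $(\id\otimes\eta)(\gamma)=F^t$ as a conjugate of $-F$ (the paper conjugates by an involution $\theta$ rather than your $J$, but it is the same mechanism), then resum the geometric series to pass between the generating functions in $\gamma$ and in $\tfrac12 y+\gamma$, substitute $x=u-\tfrac12$, and match rational functions against Theorem \ref{z1action}. The only blemish is cosmetic: for $\fso_{2r+1}$ the ratio $(u+\tfrac12 y-r)/(u-\tfrac12 y+r)$ is $u/u$, identically $1$ as a rational function, not an indeterminate $0/0$; your conclusion there is nonetheless the right one.
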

\begin{proof}
By \eqref{sospgamma},
$$\gamma = \half \sum_{i,j \in \hat V} F_{ij}\otimes F_{ji}.$$
Let $\eta\colon \fg \to \End(V)$ be the defining representation. Since $F_{-j,-i} = -\theta_{ij}F_{ij}$,
\begin{align*}(\id\otimes \eta)(\gamma)
&= \half\sum_{i,j \in \hat V} F_{ij}\otimes (E_{ji} - \theta_{ji}E_{-i,-j}) 
= \half\sum_{i,j \in \hat V}( F_{ij}\otimes E_{ji} - \theta_{ji}F_{-j,-i}\otimes E_{ji})\\
&=  \half\sum_{i,j \in \hat V}(F_{ij} - \theta_{ji}F_{-j,-i})\otimes E_{ji} 
=  \half\sum_{i,j \in \hat V}(F_{ij} +F_{ij})\otimes E_{ji} \\
&= \sum_{i,j \in \hat V}F_{ij}\otimes E_{ji} = F^t=-\theta F \theta,
\qquad\hbox{where}\quad \theta= \begin{pmatrix} \epsilon\, \id & 0 \\ 0 & \id \end{pmatrix}.
\end{align*}
Thus
\begin{equation}\label{gammaktoFk}
(\id\otimes \tr_V)(\gamma^k) =\tr( (F^t)^k )
=\tr((-\theta F \theta)^k) = (-1)^k \tr(\theta^2 F^k) = (-1)^k \tr(F^k),
\end{equation}
which provides the connection of the elements $z_0^{(\ell)}$ appearing in Theorems \ref{degBMWaction} and
\ref{z1action} to the elements in \eqref{PPelts}.

In order to transform the generating function for the elements $(\id\otimes \tr_V)(\gamma^\ell)$ into the generating function for the elements $(\id\otimes \tr_V)((\half y + \gamma)^\ell)$, notice
\begin{align*}
\sum_{\ell\in \ZZ_{\ge0}} &(\id\otimes \tr_V)
\Big((\half y + \gamma)^\ell\Big)u^{-\ell}
=(\id\otimes \tr_V)\left(\frac{1}{1-(\half y+\gamma)u^{-1}}\right)\\
&=(\id\otimes \tr_V)\left(\frac{1}{1-\half yu^{-1}-\gamma u^{-1}}\right) 
=(\id\otimes \tr_V)\!\left(\Big(\frac{1}{1-\half y u^{-1}}\Big)
\Bigg(\frac{1}{1-\gamma \frac{u^{-1}}{1-\half y u^{-1}} }\Bigg)\right) \\
&=(\id\otimes \tr_V)\!\left(\frac{1}{1-\half y u^{-1}}
\sum_{\ell\in \ZZ_{\ge 0}} \frac{\gamma^\ell u^{-\ell}}{(1-\half y u^{-1})^{\ell}}
\right) 
=u\left(
\sum_{\ell\in \ZZ_{\ge 0}} \frac{(\id\otimes \tr_V)(\gamma^\ell) }{(u-\half y)^{\ell+1}}
\right) \\
&=u\left(
\sum_{\ell\in \ZZ_{\ge 0}} \frac{(\id\otimes \tr_V)(\gamma^\ell) }{(u-\half+\rho'_r)^{\ell+1}}
\right),
\qquad\hbox{where $\rho'_r = \half - \half y$.}
\end{align*}
Then Theorem \ref{z1action} is equivalent to
\begin{align*}
\pi_0&\left(1+\frac{\epsilon u}{\epsilon u-\hbox{$\frac12$}}
\Big(
\sum_{\ell\in \ZZ_{\ge 0}} \frac{(\id\otimes \tr_V)(\gamma^\ell) }{(u-\half+\rho'_r)^{\ell+1}}
\Big)\right) 
=
\pi_0\Big(1+\frac{\epsilon}{\epsilon u-\hbox{$\frac12$}}
\sum_{\ell\in \ZZ_{\ge0}} (\id\otimes \tr_V)\left( \left(\half y + \gamma\right)^\ell\right)u^{-\ell}\Big) 
\\
&= 
\frac{(\epsilon\,u+\half)}{(\epsilon\,u-\half)}
\frac{(u+\half y - r)}{(u-\half y + r)}
\sigma_\rho\left( \prod_{i =1}^r 
\frac{(u+h_i + \half)}
{ (u +h_i - \half)}
\frac{ (u -h_i + \half )}
{(u -h_i - \half)}\right) \\
&= 
\frac{(\epsilon\,u+\half)}{(\epsilon\,u-\half)}
\frac{(u+\half y - r)}{(u-\half y + r)}
\left( \prod_{i =1}^r 
\frac{(u+h_i +\rho_i + \half)}
{ (u +h_i +\rho_i - \half)}
\frac{ (u -h_i - \rho_i + \half )}
{(u -h_i -\rho_i - \half)}\right) \\
&= 
\frac{(u-\half+\half\epsilon+\half)}{(u-\half-\half\epsilon+\half)}
\frac{(u-\half+\half y - r+\half)}{(u-\half-\half y + r+\half))}\!
\left( \prod_{i =1}^r 
\frac{(u-\half+h_i +\rho_i + 1)}
{ (u-\half +h_i +\rho_i)}
\frac{ (u-\half -h_i - \rho_i + 1 )}
{(u-\half -h_i -\rho_i)}\right)
.\end{align*}
Replacing $x= u-\half$,
\begin{align*}
&\pi_0\left(1+\frac{x+\half}{x+\half-\half\epsilon }
\Big(
\sum_{\ell\in \ZZ_{\ge 0}} \frac{(\id\otimes \tr_V)(\gamma^\ell) }{(x+\rho'_r)^{\ell+1}}
\Big)\right) \\
&\qquad= 
\frac{(x+\half\epsilon+\half)}{(x-\half\epsilon+\half)}
\frac{(x+\half y - r+\half)}{(x-\half y + r+\half))}
\left( \prod_{i =1}^r 
\frac{(x+h_i +\rho_i + 1)}
{ (x +h_i +\rho_i)}
\frac{ (x -h_i - \rho_i + 1 )}
{(x -h_i -\rho_i)}\right)
\end{align*}
Since
$$
\frac{(x+\half\epsilon+\half)}{(x-\half\epsilon+\half)}
\frac{(x+\half y - r+\half)}{(x-\half y + r+\half))}
=\begin{cases}
\frac{x+l_0+1}{x+l_0}, &\hbox{if $\fg=\fso_{2r+1}$,} \\
1, &\hbox{if $\fg = \fsp_{2r}$ or $\fso_{2r}$,}
\end{cases}
$$
it follows that
\begin{align*}
\pi_0\left(1+\frac{x+\half}{x+\half-\half\epsilon }
\Big(
\sum_{\ell\in \ZZ_{\ge 0}} \frac{(\id\otimes \tr_V)(\gamma^\ell) }{(x+\rho'_r)^{\ell+1}}
\Big)\right) 
= \prod_{i\in \hat V} \frac{x+l_i+1}{x+l_i}. 
\end{align*}
In combination with \eqref{gammaktoFk}, this demonstrates the equivalence of the Perelomov-Popov
theorem and Theorem \ref{z1action}.
\end{proof}

\begin{remark} Since $\pi_0$ and 
$\rho=\rho_1\varepsilon_1+\cdots+\rho_n\varepsilon_n$ are defined via a specific choice of
positive roots, each side of \eqref{PPformula} depends on that choice,
though the identity does not.  The preferred
choice of positive roots in \cite[p. 139]{Mo} differs from our preferred choice in
\eqref{posroots} by the action of the Weyl group element
$w = (1,-r)(2, -(r-1))\cdots (r-1,-2)(r,-1)$, in cycle notation.
\end{remark}

\subsection{The central elements $Z_V^{(\ell)}$ as quantum higher Casimir elements}
\label{sec:BB-to-Bau}
In this section, we show how the formula for the central elements
$Z_V^{(\ell)}$ in Corollary \ref{cor:ZactionToBaumann}
is related to an  identity for central elements in quantum groups
discovered by Baumann in \cite[Thm.\ 1]{Bau}.  To do this we rewrite the 
Baumann identity for $\fg=\fsp_{2r}$, $\fso_{2r+1}$ and $\fso_{2r}$ and
$\lambda=\varepsilon_1$ in terms of Weyl characters indexed by partitions.
Then a theorem of Turaev and Wenzl computing $(\id \otimes \qtr_{L(\nu)}) (\cR_{21}\cR)$
provides a conversion between the expansion in Corollary \ref{cor:ZactionToBaumann}
and the expansion obtained from Baumann's identity.

For $\lambda\in \fh^*$ define the \emph{Weyl character}
$$s_\lambda = \frac{a_{\lambda+\rho}}{a_\rho},
\qquad\hbox{where}\quad
a_\mu = \sum_{w\in W_0} \det(w) e^{w\mu}.
$$
The expressions $s_\lambda$ and $a_\mu$ are elements of the group algebra
of $\fh^*$,  $\CC[\fh^*] = \CC\text{-span}\{ e^\nu\ |\ \nu\in \fh^*\}$ with $e^\mu e^\nu = e^{\mu + \nu}$.
If $w\in W_0$ then
\begin{equation}\label{slinkyrule}
a_{w\mu} = \det(w)a_\mu
\qquad\hbox{and}\qquad
s_{w\circ\mu} = \det(w)s_\mu,
\end{equation}
where the \emph{dot action} of $W_0$ on $\fh^*$ is given by 
\begin{equation}\label{dotaction}
w\circ \mu = w(\mu+\rho) - \rho,
\qquad\hbox{for $w\in W_0$, $\mu\in \fh^*$.}
\end{equation}
The $W_0$-invariants in $\CC[\fh^*]$ are 
$\CC[\fh^*]^{W_0} =\CC\text{-span}\{
s_\lambda\ |\ \lambda\ \hbox{dominant integral}\}.$
For $\ell\in \ZZ_{\ge 0}$ let
\begin{equation}\label{Psielldefn}
\begin{matrix}
\Psi_\ell\colon &\CC[\fh^*]^{W_0} &\longrightarrow &Z(U_h\fg) \\
&s_\nu &\longmapsto &(\id\otimes \qtr_{L(\nu)})((\cR_{21}\cR)^\ell)
\end{matrix}.
\end{equation}
By \cite[Prop.\ 1.2]{Dr}, the map $\Psi_\ell$ is a vector space isomorphism.
%
%

\begin{thm}\label{Baumann}  \cite[Thm.\ 1]{Bau}
For $\ell\in \ZZ_{\ge 0}$ define
$$m_\lambda^{(\ell)} = \sum_{w\in W_0} q^{2\ell\langle w\lambda, \rho\rangle}
s_{w\lambda}.
\qquad\hbox{Then}\quad
\Psi_\ell(m_\lambda^{(\ell)}) = \Psi_1(m_{\ell\lambda}^{(1/\ell)}).$$
\end{thm}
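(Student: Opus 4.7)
Both sides lie in $Z(U_h\fg)$, so by the Harish-Chandra isomorphism (Theorem \ref{HCIso}) the claim reduces to verifying that they act by the same scalar on every finite-dimensional irreducible $L(\mu)$.

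The first step is an explicit formula for the scalar by which $\Psi_\ell(s_\nu)$ acts on $L(\mu)$. The ribbon-Hopf-algebra identity $\cR_{21}\cR=(v\otimes v)\Delta(v)^{-1}$, combined with the centrality of $v$ (so that $(\cR_{21}\cR)^\ell=(v^\ell\otimes v^\ell)\Delta(v)^{-\ell}$), the decomposition \eqref{fulltwist}, the eigenvalue $v|_{L(\lambda)}=q^{-C(\lambda)}$ with $C(\lambda):=\langle\lambda,\lambda+2\rho\rangle$, and the Schur's lemma reduction
$$
(\id\otimes\qtr_{L(\nu)})(T)\big|_{L(\mu)}=\frac{1}{\dim_q L(\mu)}\sum_\lambda c_{\mu\nu}^\lambda\, t_\lambda\,\dim_q L(\lambda)
$$
for a $U_h\fg$-linear endomorphism $T$ of $L(\mu)\otimes L(\nu)$ acting as $t_\lambda\id$ on each isotypic component $L(\lambda)^{\oplus c_{\mu\nu}^\lambda}$, yields
$$
\Psi_\ell(s_\nu)\big|_{L(\mu)}=\frac{q^{-\ell C(\mu)-\ell C(\nu)}}{\dim_q L(\mu)}\sum_\lambda c_{\mu\nu}^\lambda\,q^{\ell C(\lambda)}\,\dim_q L(\lambda).
$$

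Next, extend $\Psi_\ell$ linearly to non-dominant weights via \eqref{slinkyrule} (which expresses $s_{w\lambda}$ as $\pm s_{w'\lambda}$ for a dominant representative, or $0$ on a wall) and apply the displayed formula to each term of $m_\lambda^{(\ell)}$ and $m_{\ell\lambda}^{(1/\ell)}$. Using $W_0$-invariance of $\langle,\rangle$ one has $C(w\nu)=\langle\nu,\nu\rangle+2\langle w\nu,\rho\rangle$, so the weight $q^{2\ell\langle w\lambda,\rho\rangle}$ in $m_\lambda^{(\ell)}$ absorbs into $q^{\ell C(w\lambda)}$ up to a uniform scalar $q^{-\ell\langle\lambda,\lambda\rangle}$, and a parallel reorganization applies on the right. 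After extracting the common prefactor, the theorem becomes a combinatorial identity between two double alternating sums over $W_0$ of quantum dimensions and Casimir exponents.

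The main obstacle is this last identity: it does not hold termwise but requires a theta-function-like symmetry relating $W_0$-orbit sums weighted by $q^{\ell C(\,\cdot\,)}$ to those weighted by $q^{C(\ell\,\cdot\,)}$. This is precisely the content of Baumann's slick argument in \cite[Thm.\ 1]{Bau}, which bypasses the brute-force character manipulation above by working directly with the Gauss-form decomposition of the universal $\cR$-matrix; we would appeal to his proof to complete the argument.
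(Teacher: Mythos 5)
This theorem is quoted verbatim from Baumann (\cite[Thm.\ 1]{Bau}); the paper supplies no proof of it, so there is no internal argument to compare yours against. Your preliminary reductions are correct as far as they go: the scalar by which $\Psi_\ell(s_\nu)$ acts on $L(\mu)$ does follow from $\cR_{21}\cR=(v\otimes v)\Delta(v)^{-1}$, the eigenvalue computation \eqref{fulltwist}, and multiplicativity of the quantum trace, and the absorption of $q^{2\ell\langle w\lambda,\rho\rangle}$ into $q^{\ell\langle w\lambda,w\lambda+2\rho\rangle}$ via $W_0$-invariance of the form is sound. The problem is that, by your own account, the argument terminates in an appeal to Baumann's proof of the very identity being asserted: the ``combinatorial identity between two double alternating sums'' you arrive at is not a simplification of the theorem but a restatement of it, so nothing has been established beyond what the bare citation already provides. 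If your goal is to reproduce the paper's treatment, the citation alone suffices and the scaffolding is superfluous; if your goal is an independent proof, the essential step --- relating the $\ell$-th power of the full twist on $L(\mu)\otimes L(\lambda)$ to the single twist on $L(\mu)\otimes L(\ell\lambda)$, which Baumann extracts from the Gauss decomposition of the universal $\cR$-matrix together with a Weyl-group symmetrization --- is entirely absent.
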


\begin{cor} \label{Bmneps1} In the same setting as in Theorem \ref{Z1action}, let
$$y
= \begin{cases}
2r, &\hbox{if $\fg = \fso_{2r+1}$,} \\
2r+1, &\hbox{if $\fg = \fsp_{2r}$,} \\
2r-1, &\hbox{if $\fg = \fso_{2r}$,} 
\end{cases}
\qquad
\epsilon\, =
	\begin{cases}
		\!~~~1, &\hbox{if $\fg = \fso_{2r+1}$,} \\
		-1, &\hbox{if $\fg = \fsp_{2r}$,} \\
		\!~~~1, &\hbox{if $\fg = \fso_{2r}$,}
	\end{cases} \qquad
 V=L(\varepsilon_1),
$$
$z=\epsilon q^y$, and let $Z_V^{(\ell)}$ be the 
central elements in the Drinfeld-Jimbo quantum group $U_h\fg$ which are 
given by $Z_V^{(\ell)}= 
\epsilon (\id\otimes \qtr_{L(\varepsilon_1)})((z\cR_{21}\cR)^\ell)$ Then, for $\ell\ge 1$,
$$
Z_V^{(\ell)} 
=\epsilon^{\ell}\Psi_1\left( \begin{array}{l}
	\displaystyle{ c ~+~  z\hspace{-20pt}\sum_{m={\max(\ell-r+1,1)}}^\ell \hspace{-20pt} (-1)^{\ell-m}q^{{-(\ell-2m+1)}}s_{(m,1^{\ell-m+1-1})}}\\
 \displaystyle{\hspace{95pt} 
+ z\hspace{-25pt}\sum_{{m=\max(y-\ell+1,1)}}^{{\ell-y+r}}  \hspace{-25pt}  q^{{-(\ell-2m+1)}}(-1)^{m-\ell+y}s_{{(m,1^{m-\ell+y-1})}}}
\end{array}
\right),
$$
where $c$ is given by
$$ c =  \begin{cases} 1 & \text{ if $\ell$ is even and $\ell < y$,}\\
&\text{ or  $\ell \geq y$ and $\fg=\fso_{2r+1}$,}\\
		0 & \text{otherwise.}\end{cases}$$
\end{cor}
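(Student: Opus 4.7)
The plan is to apply Baumann's identity (Theorem \ref{Baumann}) to $\lambda = \varepsilon_1$ and then expand the resulting Weyl characters using the dot-action rule $s_{w\circ \nu} = \det(w) s_\nu$, reducing each character to a hook Weyl character or to a constant.

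Since $V = L(\varepsilon_1)$ has Weyl character $s_{\varepsilon_1}$, the definition \eqref{Psielldefn} yields $Z_V^{(\ell)} = \epsilon z^\ell \Psi_\ell(s_{\varepsilon_1})$. To apply Baumann's identity I first relate $s_{\varepsilon_1}$ to $m_{\varepsilon_1}^{(\ell)}$. The Weyl orbit $W_0 \varepsilon_1 = \{\pm\varepsilon_i : 1 \leq i \leq r\}$ contains each $\pm\varepsilon_i$ with a common stabilizer $\mathrm{Stab}_{W_0}(\varepsilon_1)$. Most terms in $m_{\varepsilon_1}^{(\ell)} = \sum_{w \in W_0} q^{2\ell\langle w\varepsilon_1, \rho\rangle} s_{w\varepsilon_1}$ vanish: for $i \geq 2$, the shift $\varepsilon_i + \rho$ has $\rho_i + 1 = \rho_{i-1}$ (two equal entries) and so lies on a reflection hyperplane, giving $s_{\varepsilon_i} = 0$; an analogous argument handles $s_{-\varepsilon_i}$ for most $i$. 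Thus $m_{\varepsilon_1}^{(\ell)}$ reduces to $|\mathrm{Stab}_{W_0}(\varepsilon_1)| \cdot q^{2\ell\rho_1} s_{\varepsilon_1}$ (plus, in certain types, an additive constant from a surviving $s_{-\varepsilon_r}$ term), which I solve for $\Psi_\ell(s_{\varepsilon_1})$.

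Baumann's theorem then gives $\Psi_\ell(m_{\varepsilon_1}^{(\ell)}) = \Psi_1(m_{\ell\varepsilon_1}^{(1/\ell)})$. After substituting $z = \epsilon q^y$ and $2\rho_1 = y - 1$ this becomes
$$Z_V^{(\ell)} = \frac{\epsilon^{\ell+1} q^\ell}{|\mathrm{Stab}_{W_0}(\varepsilon_1)|}\, \Psi_1\bigl(m_{\ell\varepsilon_1}^{(1/\ell)}\bigr).$$
I expand
$$m_{\ell\varepsilon_1}^{(1/\ell)} = |\mathrm{Stab}_{W_0}(\varepsilon_1)| \sum_{i=1}^r \bigl(q^{2\rho_i} s_{\ell\varepsilon_i} + q^{-2\rho_i} s_{-\ell\varepsilon_i}\bigr)$$
and reduce each Weyl character by the dot action. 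For $s_{\ell\varepsilon_i}$ with $\ell \geq i$, the element $\ell\varepsilon_i + \rho$ is brought to the dominant chamber by the cycle moving its $i$-th entry $\rho_i + \ell$ to the front, yielding $s_{\ell\varepsilon_i} = (-1)^{i-1} s_{(\ell-i+1, 1^{i-1})}$; setting $m = \ell - i + 1$ and combining $q^{2\rho_i} = q^{y - 2i + 1}$ with the prefactor $q^\ell$ produces the first sum in the Corollary, with the range $1 \leq i \leq \min(r, \ell)$ translating to $\max(\ell-r+1, 1) \leq m \leq \ell$. For $s_{-\ell\varepsilon_i}$, the dot action involves a sign flip of the $i$-th entry followed by a cyclic shift, yielding $s_{-\ell\varepsilon_i} = \pm s_{(m, 1^{i-1})}$ with $m = i + \ell - y$; substituting $i = m - \ell + y$ produces the second sum.

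The constant $c$ arises from those special values of $i$ and $\ell$ for which the dot-action reduction of $\pm\ell\varepsilon_i$ sends the weight to $\rho$ itself (i.e., $m = 0$), giving $s = \pm 1$ rather than a hook character of positive shape; tracking these special configurations across the types $\fso_{2r+1}$, $\fsp_{2r}$, $\fso_{2r}$ and the parities of $\ell$ yields the stated formula for $c$. Main obstacle: the dot-action reduction of $s_{-\ell\varepsilon_i}$ must be handled separately in each classical type, and type $D_r$ in particular requires care because $W(D_r)$ only contains even numbers of sign flips, which affects the determinant of the Weyl element used. Bookkeeping the stabilizer factors, determining the precise ranges of summation (ensuring the hook shape $(m, 1^{m - \ell + y - 1})$ is a valid partition fitting within $r$ rows), and identifying the case-by-case contribution to $c$, constitute the principal technical challenges.
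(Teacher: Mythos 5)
Your proposal is correct and follows essentially the same route as the paper's proof: apply Baumann's identity at $\lambda=\varepsilon_1$, observe that $m_{\varepsilon_1}^{(\ell)}$ collapses to a multiple of $s_{\varepsilon_1}$ (plus a constant in type $B$) because $\varepsilon_i+\rho$ lies on a wall for $i\ge 2$, then expand $m_{\ell\varepsilon_1}^{(1/\ell)}$ via the dot action to reduce $s_{\pm\ell\varepsilon_i}$ to signed hook characters $\pm s_{(m,1^{i-1})}$ or to constants, with the type-$D$ sign-change parity handled separately. The reindexings $m=\ell-i+1$ and $m=\ell-y+i$, the stabilizer factor, and the origin of $c$ in the degenerate $s_0$ terms all match the paper's argument.
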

\begin{proof}

The Weyl group $W_0$ for $\fg=\fso_{2r+1}$ or $\fg = \fsp_{2r}$ is 
the group of signed permutations. With positive roots as in \eqref{posroots},
the simple reflections are  $s_i = (i, i+1)$ (the transposition switching $\varepsilon_i$ and
$\varepsilon_{i+1}$, for $i=1,2,\ldots, r-1$) and $s_r = (r,-r)$ (the transposition switching
 the sign of $\varepsilon_r$).  For $\fg=\fso_{2r}$, the Weyl group $W_0$ consists of
signed permutations with an even number of signs, with simple reflections $s_i=(i,i+1)$ for $i=1,2,\ldots, r-1$, and
$s_r = (r-1, -r)(r,-(r-1))$.

To prove the desired identity, we will use the second identity in \eqref{slinkyrule} to relabel the Weyl characters
$s_{w\lambda}$ appearing in 
$$m_{\varepsilon_1}^{(\ell)} 
= \sum_{w\in W_0} q^{2\ell\langle w\varepsilon_1, \rho\rangle}
s_{w\varepsilon_1}
\qquad\hbox{and}\qquad
m_{\ell\varepsilon_1}^{(1/\ell)} 
= \sum_{w\in W_0} q^{2\langle w\varepsilon_1, \rho\rangle}
s_{w\ell\varepsilon_1}
$$
by dominant integral weights.
By \eqref{explrho},
$\rho_{i+1} = \rho_i-1$ and $\rho_r = \half(y-2r+1)$. So
if $\mu= \mu_1\varepsilon_1+\cdots+\mu_r\varepsilon_r$ then
$$s_i\circ \mu = \mu_1\varepsilon_1+\cdots+ \mu_{i-1}\varepsilon_{i-1}
+(\mu_{i+1}-1)\varepsilon_i+(\mu_i+1)\varepsilon_{i+1}
+\mu_{i+2}\varepsilon_{i+2}+\cdots+\mu_r\varepsilon_r$$
for $i=1,2,\ldots, r-1$, and
$$s_r\circ \mu 
=\mu_1\varepsilon_1+\cdots\mu_{r-1}\varepsilon_{r-1}+(-\mu_r-(y-2r+1))\varepsilon_r, \qquad \text{ if $\fg= \fso_{2r+1}$ or $\fsp_{2r}$, and }$$
$$s_r\circ \mu 
=\mu_1\varepsilon_1+\cdots\mu_{r-1}\varepsilon_{r-2}+(-\mu_r-1)\varepsilon_{r-1} + (-\mu_{r-1}-1)\varepsilon_{r}, \qquad \text{ if $\fg= \fso_{2r}$.}\qquad\quad$$

\noindent In particular, 
$s_{\ell\varepsilon_i} = 0$ if $0<\ell<i$, and
\begin{equation}\label{slinky-pos}
s_{\ell\varepsilon_i} = 
s_{s_1s_2\cdots s_{i-1}\circ \ell\varepsilon_i} = (-1)^{i-1}s_{(\ell-i+1)\varepsilon_1
+\varepsilon_2+\cdots+\varepsilon_i}
=(-1)^{i-1}s_{(\ell-i+1,1^{i-1})}
\qquad\hbox{if $ 0< i \le \ell$.}\end{equation}

\noindent Furthermore, if $\fg= \fso_{2r+1}$ or $\fsp_{2r}$, then
\begin{align}
s_i\cdots &s_{r-1}s_rs_{r-1}\cdots s_i\circ(-\ell\varepsilon_i)
= s_i\cdots s_{r-1}s_r\circ ( - (\varepsilon_i+\cdots+\varepsilon_{r-1}) 
- (\ell-(r-i))\varepsilon_r) \nonumber \\
&=s_i\cdots s_{r-1}\circ ( - (\varepsilon_i+\cdots+\varepsilon_{r-1}) 
+ (\ell-(r-i)-(y-2r+1))\varepsilon_r) \nonumber \\
&= (\ell-(r-i)-(y-2r+1)-(r-i))\varepsilon_i = (\ell+2i-y-1)\varepsilon_i.
\label{negterm}
\end{align}
Similarly, if $\fg=\fso_{2r}$, then
\begin{align}
s_i\cdots &s_{r-2}s_rs_{r-1}\cdots s_i\circ(-\ell\varepsilon_i)
= s_i\cdots s_{r-2}s_r\circ ( - (\varepsilon_i+\cdots+\varepsilon_{r-1}) 
- (\ell-(r-i))\varepsilon_r) \nonumber \\
&=s_i\cdots s_{r-2}\circ ( - (\varepsilon_i+\cdots+\varepsilon_{r-2}) 
+ (\ell-(r-i)-1)\varepsilon_{r-1}) \nonumber \\
&= (\ell-(r-i)-1-(r-i-1))\varepsilon_i = (\ell+2i-y-1)\varepsilon_i.
\label{negtermD}
\end{align}

\noindent So, letting $W_{\varepsilon_1}$ be the stabilizer of $\varepsilon_1$,
$|W_{\varepsilon_1}|=2^{r-1}(r-1)!$,
and combining \eqref{slinky-pos} and \eqref{negterm} gives
\begin{align}\label{BLHSeps1}
\frac{1}{|W_{\varepsilon_1}|}m_{\varepsilon_1}^{(\ell)}
&= \sum_{i=1}^r q^{2\ell\langle \varepsilon_i, \rho\rangle} s_{\varepsilon_i}
+ q^{2\ell\langle -\varepsilon_i, \rho\rangle} s_{-\varepsilon_i} 
= \begin{cases}
q^{2 \ell \langle \varepsilon_1, \rho \rangle}s_{\varepsilon_1} + q^{-2 \ell \langle \varepsilon_r, \rho \rangle}s_{\varepsilon_{-r}} & \text{ if $\fg = \fso_{2r + 1}$,}\\
q^{2 \ell \langle \varepsilon_1, \rho \rangle}s_{\varepsilon_1} &  \text{ if $\fg = \fsp_{2r}$ or $\fg = \fso_{2r}$.}
\end{cases}
\nonumber \\
&=
q^{\ell(y-1)}s_{\varepsilon_1} - a
 \qquad \text{ where } a = \begin{cases}q^{-\ell}s_0  & \text{ if } \fg=\fso_{2r+1}\\
			0 & \text{otherwise,}
			\end{cases}
\end{align}
since $1+2i-y-1=0$ has a solution exactly if $i=r$ and $\fg=\fso_{2r+1}$.  If $\ell>0$ then using 
$$\det(s_i\cdots s_{r-1}s_rs_{r-1}\cdots s_i) = -1 = -\det(s_i\cdots s_{r-2}s_rs_{r-1}\cdots s_i) ,$$
 $\ell+2i-y-1-(i-1)=\ell-y+i$, and \eqref{slinky-pos}, equations \eqref{negterm} and  \eqref{negtermD} give
\begin{align*}
q^{2\langle -\varepsilon_i, \rho\rangle}s_{-\ell\varepsilon_i}
&= \begin{cases}
-q^{-\ell}s_0, &\hbox{if $\ell=y+1-2i$ and $\fg=\fso_{2r+1}$ or $\fsp_{2r}$}, \\
q^{-\ell}s_0, &\hbox{if $\ell=y+1-2i$ and $\fg=\fso_{2r}$}, \\
(-1)^{i}q^{-(y-2i+1)}s_{(\ell-y+i, 1^{i-1})},
&\hbox{if $\ell-y+i\geq 1$}, \\
0, &\hbox{otherwise}.
\end{cases}
\end{align*}
Since $\ell-y+i\geq 1$ when $i\ge y-\ell+1$,
\begin{align*}
&\frac{1}{|W_{\varepsilon_1}|}m_{\ell \varepsilon_1}^{(1/\ell)}
= \sum_{i=1}^r
q^{2\langle \varepsilon_i, \rho\rangle} s_{\ell\varepsilon_i}
+q^{2\langle -\varepsilon_i, \rho\rangle} s_{-\ell\varepsilon_i} \\
&=\left(\sum_{i=1}^{\min(\ell,r)} q^{y-2i+1} (-1)^{i-1} s_{(\ell-i+1, 1^{i-1})}\right)
 -b+
\left(\sum_{i=\max(y-\ell+1,1)}^r q^{-(y-2i+1)} (-1)^{i} s_{(\ell-y+i, 1^{i-1})}\right)
\end{align*}
$$\text{where $b=0$ if $\ell \geq y$, and if $\ell<y$ then $b$ is given by }\quad 
\begin{array}{c|c|c}
&\ell \text{ is odd}& \ell \text{ is even}\\ \hline
\fg = \fso_{2r+1}& q^{-\ell}s_0& 0\phantom{\Big|} \\ \hline
\fg=\fsp_{2r} &0  & q^{-\ell}s_0\phantom{\Big|}\\ \hline
\fg=\fso_{2r} &0  & -q^{-\ell}s_0\phantom{\Big|}
\end{array}
$$
(so that $b$ is nonzero exactly when $\ell=y+1-2i$ has a solution with $1\le i\le r$).
Then reindexing (with $m = \ell - i +1$ in the first sum and $m=\ell - y +i$ in the second sum) gives
\begin{align*}
\frac{1}{|W_{\varepsilon_1}|}m_{\ell \varepsilon_1}^{(1/\ell)}
&=-b+ \sum_{m={\max(\ell-r+1,1)}}^\ell (-1)^{\ell-m}q^{{y-2(\ell-m) - 1}}s_{(m,1^{\ell-m+1-1})}\\
&\qquad \qquad \qquad \qquad
+ \sum_{{m=\max(y-\ell+1,1)}}^{{\ell-y+r}} q^{{y-2(\ell-m) - 1}}(-1)^{m-\ell+y}s_{{(m,1^{m-\ell+y-1})} } .
\end{align*}
Notice that the last sum appears only if $\ell>y-r$.

Theorem \ref{Baumann} applied in the case that $\lambda = \varepsilon_1$ gives
\begin{align*}
&Z_V^{(\ell)}=\epsilon(\id\otimes \qtr_V)((z\cR_{21}\cR)^\ell)
=\epsilon z^\ell \Psi_{\ell}(s_{\varepsilon_1})=\epsilon (\epsilon q^y)^\ell  \Psi_\ell\left(q^{-\ell(y-1)}\left(
\frac{1}{|W_{\varepsilon_1}|}m_{\varepsilon_1}^{(\ell)} 
+ a \right)\right)
\nonumber \\
&=\epsilon^{\ell+1}q^\ell \left(\frac{1}{|W_{\varepsilon_1}|}\Psi_\ell\left(
m_{\varepsilon_1}^{(\ell)}\right) + a\right)
=\epsilon^{\ell+1}q^\ell \left(\frac{1}{|W_{\varepsilon_1}|}\Psi_1\left(
m_{\ell\varepsilon_1}^{(1/\ell)}\right) +  a\right)\nonumber\\
&= \epsilon^{\ell}\Psi_1\left( \epsilon q^\ell \left(\frac{m_{\ell\varepsilon_1}^{(1/\ell)}}{|W_{\varepsilon_1}|}
+  a\right)\right)\\
&=\epsilon^{\ell}\Psi_1\left( 
	\epsilon q^\ell (a-b) +  z\hspace{-25pt}\sum_{m={\max(\ell-r+1,1)}}^\ell \hspace{-25pt} (-1)^{\ell-m}q^{{-(\ell-2m+1)}}s_{(m,1^{\ell-m+1-1})}\right.\\
& \hspace{95pt} 
+ \left.z\hspace{-25pt}\sum_{{m=\max(y-\ell+1,1)}}^{{\ell-y+r}}  \hspace{-25pt}  q^{{-(\ell-2m+1)}}(-1)^{m-\ell+y}s_{{(m,1^{m-\ell+y-1})}}
\right),
\end{align*}
since $z = \epsilon q^y$. The result follows since $c=\epsilon q^\ell (a-b)$. 
\end{proof}

We would like to connect Corollary \ref{Bmneps1} to the Harish-Chandra images of the parameters $Z_V^{(\ell)}$ computed in Corollary \ref{cor:ZactionToBaumann}. In order to do so, we will use
the following result from \cite[Lemma 3.5.1]{TW} (also see  \cite[Prop.\ 5.3]{Dr}).

\begin{thm}\label{thm:TW}  Let $\fg$ be a finite-dimensional complex Lie algebra with a symmetric nondegenerate $\ad$-invariant bilinear form and let $U= U_h\fg$
be the corresponding Drinfeld-Jimbo quantum group with $R$-matrix $\cR$.
Let $\nu$ be a dominant integral weight so that the irreducible module $L(\nu)$ of highest 
weight $\nu$ is finite-dimensional and let $s_\nu$ be the Weyl character of $L(\nu)$.  Then
$$\hbox{
$(\id \otimes \qtr_{L(\nu)}) (\cR_{21}\cR)$\quad acts on $L(\mu)$ by\quad 
$\mathrm{ev}_{2(\mu+\rho)} (s_\nu)\id_{L(\mu)}$},
$$
where $\mathrm{ev}_\gamma\colon \CC[\fh^*] \to \CC$ are the algebra homomorphisms given by
$\mathrm{ev}_\gamma(e^{\tau})=q^{\langle \gamma,\tau\rangle}$ for $\gamma,\tau\in \fh^*$.
\end{thm}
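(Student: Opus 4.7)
The plan is to combine Schur's lemma with a double-trace computation and then identify the resulting scalar using the structure of $\cR$. First, note that on $L(\mu)\otimes L(\nu)$ the operator $\cR_{21}\cR$ coincides with $\check R_{L(\mu),L(\nu)}\check R_{L(\nu),L(\mu)}$ (the double braiding), which is a $U_h\fg$-module endomorphism by quasitriangularity. Therefore the partial quantum trace $z_\nu := (\id\otimes\qtr_{L(\nu)})(\cR_{21}\cR)$ is a $U_h\fg$-module endomorphism of $L(\mu)$, and by Schur's lemma it acts by a scalar $\alpha_{\mu,\nu}$. (In fact $z_\nu\in Z(U_h\fg)$ by the classical result of Drinfeld \cite[Prop.\ 1.2]{Dr}, which makes the statement well posed independent of the choice of $L(\mu)$.)

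Next, I would compute the full quantum trace of $\cR_{21}\cR$ on $L(\mu)\otimes L(\nu)$ in two compatible ways. Decomposing $L(\mu)\otimes L(\nu) = \bigoplus_\lambda L(\lambda)^{\oplus c_{\mu\nu}^\lambda}$ and invoking \eqref{fulltwist}, the scalar of $\cR_{21}\cR$ on the $L(\lambda)$-isotypic component is $q^{\langle\lambda,\lambda+2\rho\rangle - \langle\mu,\mu+2\rho\rangle - \langle\nu,\nu+2\rho\rangle}$, so
$$
\qtr_{L(\mu)\otimes L(\nu)}(\cR_{21}\cR) = \sum_\lambda c_{\mu\nu}^\lambda\,\dim_q L(\lambda)\,q^{\langle\lambda,\lambda+2\rho\rangle - \langle\mu,\mu+2\rho\rangle - \langle\nu,\nu+2\rho\rangle}.
$$
On the other hand, iterating partial traces gives $\qtr_{L(\mu)\otimes L(\nu)}(\cR_{21}\cR) = \qtr_{L(\mu)}(z_\nu) = \alpha_{\mu,\nu}\,\dim_q L(\mu)$, yielding a closed formula for $\alpha_{\mu,\nu}$.

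The final step, and the main obstacle, is identifying $\alpha_{\mu,\nu}$ with $\mathrm{ev}_{2(\mu+\rho)}(s_\nu)$. The most direct route is to evaluate $z_\nu$ on a highest-weight vector $v_\mu^+ \in L(\mu)$ directly using the factorization $\cR = q^{\sum_i h_i\otimes h^i}\,\Theta$ with $\Theta \in 1 + U\fn^-\,\hat{\otimes}\,U\fn^+$, together with the standard formula $\qtr_{L(\nu)}(x) = \tr_{L(\nu)}(K^{2\rho}x)$. Writing $\cR_{21}\cR$ as a sum over a weight basis of $L(\nu)$ and using that $\fn^+$ acts as zero on $v_\mu^+$ on the first tensor factor (so the surviving non-toral terms of $\Theta_{21}\Theta$ can be reorganized cyclically under the trace and shown to cancel), only the contribution of the toral part $q^{2\sum_i h_i\otimes h^i}$ survives. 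This leaves
$$
z_\nu\cdot v_\mu^+ \;=\; \sum_\tau (\dim L(\nu)_\tau)\,q^{2\langle \mu+\rho,\tau\rangle}\cdot v_\mu^+ \;=\; \mathrm{ev}_{2(\mu+\rho)}(s_\nu)\cdot v_\mu^+,
$$
where the extra $\rho$ comes from the $K^{2\rho}$ factor defining $\qtr_{L(\nu)}$. The delicate point, as in \cite[Lemma 3.5.1]{TW} and \cite[Prop.\ 5.3]{Dr}, is the cancellation of the non-toral contributions under the partial quantum trace; once this is in hand, the claimed scalar follows immediately.
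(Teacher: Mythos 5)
The paper does not actually prove this statement; it is imported wholesale from \cite[Lemma 3.5.1]{TW} (see also \cite[Prop.\ 5.3]{Dr}), so there is no in-paper argument to compare against. Judged on its own terms, your overall strategy --- $(\id\otimes\qtr_{L(\nu)})(\cR_{21}\cR)$ is a $U_h\fg$-endomorphism of $L(\mu)$, hence a scalar, and that scalar is read off by applying the operator to a highest weight vector using the factorization $\cR=q^{t_0}\Theta$ with $t_0=\sum_i h_i\otimes h^i$ --- is the right one. But two things go wrong. First, the middle step (the double quantum trace) is a detour you never use: the identity $\alpha_{\mu,\nu}\dim_q L(\mu)=\sum_\lambda c_{\mu\nu}^\lambda \dim_q L(\lambda)\,q^{\langle\lambda,\lambda+2\rho\rangle-\langle\mu,\mu+2\rho\rangle-\langle\nu,\nu+2\rho\rangle}$ does not by itself identify $\alpha_{\mu,\nu}$ with $\mathrm{ev}_{2(\mu+\rho)}(s_\nu)$; closing that route would require the Brauer--Klimyk identity $\sum_\lambda c_{\mu\nu}^\lambda a_{\lambda+\rho}=a_{\mu+\rho}\,s_\nu$ applied to a suitable exponential functional, which you do not supply.

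The genuine gap is in the vanishing of the non-toral contributions. With the triangularity you state, $\Theta\in 1\otimes 1+\bigoplus_{\beta>0}U\fn^-_{-\beta}\otimes U\fn^+_{\beta}$, one has $\cR_{21}\cR=\Theta_{21}q^{2t_0}\Theta$ with $\Theta_{21}\in 1\otimes 1+\bigoplus_{\alpha>0}U\fn^+_{\alpha}\otimes U\fn^-_{-\alpha}$, so the $U\fn^+$ components sit in the \emph{leftmost} factor $\Theta_{21}$ and act only \emph{after} $\Theta$ has already lowered the weight of the first tensor factor; the fact that $\fn^+$ kills $v_\mu^+$ is therefore unavailable. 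The $\alpha=\beta$ terms return the first factor to the highest weight line with coefficients $y_{\beta}x_{\beta}v_\mu^+\neq 0$ in general (for $\fsl_2$, $EFv_\mu^+=[m]v_\mu^+$), and they do not cancel: an explicit $\fsl_2$ check with $\nu=\varepsilon_1$ in this convention produces a spurious summand $(q-q^{-1})q^{m-3}(q^m-q^{-m})$ on top of the correct answer $q^{m+1}+q^{-m-1}$. Nor can you ``reorganize cyclically under the trace,'' since the trace is taken only over the second tensor factor. The repair is to use the opposite triangularity, $\Theta\in 1\otimes 1+\bigoplus_{\beta>0}U\fn^+_{\beta}\otimes U\fn^-_{-\beta}$, which is the one compatible with the paper's normalization $uv^{-1}=e^{h\rho}$. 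Then $\Theta(v_\mu^+\otimes w)=v_\mu^+\otimes w$ outright, because the $U\fn^+$ part now acts first on $v_\mu^+$; and every nontrivial term of $\Theta_{21}$ shifts the weight of the second factor by $+\alpha\neq 0$ and so contributes nothing to $\tr_{L(\nu)}$. Only $q^{2t_0}$ survives, and the weight factor $e^{h\rho}$ in \eqref{qtrdefn} gives
$$(\id\otimes\qtr_{L(\nu)})(\cR_{21}\cR)\,v_\mu^+=\sum_{\tau}\dim L(\nu)_\tau\, q^{2\langle\mu,\tau\rangle}q^{2\langle\rho,\tau\rangle}\,v_\mu^+=\mathrm{ev}_{2(\mu+\rho)}(s_\nu)\,v_\mu^+,$$
with no cancellation left to perform.
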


For $\fg=\fso_{2r+1}$, $\fsp_{2r}$ or $\fso_{2r}$,
the Turaev-Wenzl identity almost provides an inverse to the Harish-Chandra
homomorphism.  With $\varepsilon_1,\ldots, \varepsilon_r$ as in \eqref{epsdefn},
converting variable alphabets from 
$$Y = \sum_{i \in \hat V} e^{\varepsilon_i} \quad \text{ to } \quad
X = \sum_{i \in \hat V} L_i^2,
\qquad\hbox{then}\qquad
\ev_{2\lambda}(s_{\mu}(Y)) = \ev_{\lambda}(s_\mu(X)).$$
Thus, Theorem \ref{thm:TW} in combination with the Harish-Chandra isomorphism in Theorem \ref{HCIso} says that
$\mathrm{ev}_\mu (\pi_0(\Psi_1(s_\nu)) ) 
= \mathrm{ev}_{2(\mu + \rho)} s_\nu(Y) 
= \mathrm{ev}_{\mu+\rho} s_\nu(X) 
= \mathrm{ev}_{\mu}(\sigma_\rho(s_\nu(X))).$
Hence
\begin{equation}\label{HCinv}
\pi_0(\Psi_1(s_\nu)) = \sigma_\rho(s_\nu(X)).
\end{equation}
The modification rules of \cite[\S 2.4]{KT} are used to convert the
universal Weyl characters appearing in Corollary \ref{cor:ZactionToBaumann} to 
actual Weyl characters $s_\lambda$.  In general, either $sp_\lambda(X)=0$ or there 
is a unique dominant weight $\mu$ and 
a uniquely determined sign such that $sp_\lambda(X) = \pm s_\mu$,
and similarly for the orthogonal cases. 
In particular,  if $\ell(\lambda)<r$ then $sp_\lambda(X) = s_\lambda$ in the symplectic case 
and $so_\lambda(X) = s_\lambda$ in the orthogonal case \cite[Prop. 2.2.1]{KT}.  
In view of \eqref{HCinv}, the conversion from universal Weyl characters to actual Weyl characters 
provides the equivalence between Corollary \ref{Bmneps1} and Corollary \ref{cor:ZactionToBaumann}.

%
%
%
%
%

%
\end{document}